\theoremstyle{plain}
\newtheorem{theorem}{Theorem}
\newtheorem{proposition}[theorem]{Proposition}
\newtheorem{lemma}[theorem]{Lemma}
\theoremstyle{definition}
\newtheorem{definition}[theorem]{Definition}
\theoremstyle{remark}
\begin{document}

%\keywords{particle process \and hard-core process \and stochastic domination \and thinning}
\title{On maximal hard-core thinnings of stationary particle processes}
\author{Christian Hirsch}
\thanks{This research publication was funded by LMU Munich's Institutional Strategy LMUexcellent within the framework of the German Excellence Initiative.
    }
\address{Mathematisches Institut, Ludwig-Maximilians-Universit\"at M\"unchen, 80333 Munich, Germany}
\email{hirsch@math.lmu.de} 
\author{G\"unter Last}
\address{Institute of Stochastics, Karlsruhe Institute of Technology, 76128 Karlsruhe, Germany}
              \email{guenter.last@kit.de}

\def\MLine#1{\par\hspace*{-\leftmargin}\parbox{\textwidth}{\[#1\]}}

\def\d{{\rm d}}
\def\E{\mathbb{E}}
\def\es{\emptyset}
\def\intt{\mathsf{int}}
\def\mc{\mathcal}
\def\ms{\mathsf}
\def\LPhi{\mc{L}(\Phi)}
\def\one{\mathbbmss{1}}
\def\Q{\mathbb{Q}}
\def\P{\mathbb{P}}
\def\R{\mathbb{R}}
\def\Z{\mathbb{Z}}

\maketitle

\begin{abstract}
The present paper studies existence and distributional uniqueness of subclasses of stationary hard-core particle systems arising as thinnings of stationary particle processes. These subclasses are defined by natural maximality criteria. We investigate two specific criteria, one related to the intensity of the hard-core particle process, the other one being a local optimality criterion on the level of realizations. In fact, the criteria are equivalent under suitable moment conditions. We show that stationary hard-core thinnings satisfying such criteria exist and are frequently distributionally unique. More precisely, distributional uniqueness holds in subcritical and barely supercritical regimes of continuum percolation. Additionally, based on the analysis of a specific example, we argue that fluctuations in grain sizes can play an important role for establishing distributional uniqueness at high intensities. Finally, we provide a family of algorithmically constructible approximations whose volume fractions are arbitrarily close to the maximum.
% \PACS{PACS code1 \and PACS code2 \and more}
% \subclass{60G55 \and 60D05}
\end{abstract}

\section{Introduction}
\label{defSec}
Motivated by applications in materials science, the problem of finding good models for hard-core particle systems has a long history. Gibbs processes based on a suitable hard-core potential offer the possibility of formalizing the heuristic of a Boolean model conditioned on a certain hard-core constraint. However, Gibbsian particle processes are notoriously difficult to simulate~\cite{mase}. Additionally, it seems debatable whether the approach of conditioning on the hard-core constraint is a reasonable approximation to the physical mechanisms that lead to the formation of hard-core systems in the microstructure of advanced materials. 
Another popular and natural possibility to create hard-core particle processes starts from a random initial configuration of particles that may exhibit overlappings. Then, the hard-core constraint is enforced via a suitable thinning. For instance, the classical Mat\'ern-type processes are obtained from a Boolean model by applying an appropriate thinning rule.

Although models of Mat\'ern-type are appealingly simple to define, they suffer from the drawback of achieving only moderately high intensities. In other words, many packings appearing in materials science exhibit a substantially higher volume fraction. We refer the reader to~\cite{torquato} for a more detailed discussion concerning the relevance of random close packings in materials science. Of course, Mat\'ern-type thinnings give only specific examples of thinning operations. Is it possible to achieve denser packings by using different kinds of thinning mechanisms?

This question has recently been addressed systematically by investigating the subclass of hard-core thinnings of stationary particle processes that maximize the intensity or volume fraction under a hard-core constraint~\cite{hoerigPhD,hoerig}. Whereas the focus of~\cite{hoerig} is on simulation techniques for bounded sampling windows, in the present paper we investigate the extension of such volume-maximizing thinnings to stationary particle processes defined on the entire Euclidean space. In particular, we substantially extend the results of~\cite{hoerigPhD}.

Let $\mc{K}$ denote the family non-empty compact subsets of $\R^d$ and observe that the Hausdorff distance endows $\mc{K}$ with the structure of a metric space~\cite{sWeil}. We let $\mc{B}(\mc{K})$ denote the Borel $\sigma$-algebra on $\mc{K}$. Furthermore, $\Phi$ is assumed to be a stationary particle process on some probability space $(\Omega_0,\mc{F}_0,\P_0)$, i.e., $\Phi$ is a stationary point process on $\mc{K}$. We assume the intensity $\gamma_0$ of $\Phi$ to be non-zero and finite. 

In the following, we consider stationary thinnings of the particle process $\Phi$. To make this more precise, we let $\mc{T}_{}=\mc{T}_\Phi$ denote the family of particle processes $\Psi$ defined on some probability space $(\Omega,\mc{F},\P)$ such that there exists a measurable map $S:\Omega\to\Omega_0$ with the following properties:
\begin{enumerate}
	\item $\P_0$ is the image measure of $\P$ under $S$,
	\item $\Psi$ is a realizationwise subset of $\Phi\circ S$, and
	\item the particle processes $\Psi$ and $\Phi\circ S$ are jointly stationary.
\end{enumerate}
A priori, the probability space $(\Omega, \mc{F}, \P)$ could vary from one particle processes in $\mc{T}_\Phi$ to another. However, after a possible extension, all random variables occurring in this paper can be assumed to be defined on a fixed probability space $(\Omega, \mc{F}, \P)$ with expectation operator $\E$. Therefore, to ease notation, we shall assume that $\Psi$ and $\Phi$ are defined on this space with $S$ being the identity map. Note that $(\Phi,\Psi)$ is a coupling of $\Psi$ and $\Phi$.

%However, after a possible enlargement, all operations in the present paper can be performed on a fixed probability space $(\Omega, \mc{F}, \P)$. We write $\E$ when forming expectations with respect to $\P$. We stress that $(\Psi, \Phi \circ S)$ is a coupling of $\Psi$ and $\Phi$. In the following, to ease notation, we assume that $\Psi$ and $\Phi$ are defined on the same probability space with $S$ being the identity function.

We write $\Q$ to denote the typical grain distribution of $\Psi$. Introducing the intensity $\gamma$ of $\Psi$ via
$$\gamma = \E\sum_{K\in\Psi}\one\{c(K) \in [0,1]^d\} ,$$
where $c(K)$ is the center of gravity of the particle $K$, this distribution is characterized by the Campbell formula
$$\E \sum_{K\in\Psi} f(K) = \gamma \int_{\R^d}\int_{\mc{K}}f(K + x)\Q(\d K) \d x$$%=\frac{1}{\gamma}\E \sum_{K\in\Psi} \one\{c(K) \in [0,1]^d\}f(K - c(K))$$
for any measurable function $f:\mc{K}\to[0,\infty)$.  Moreover, we assume $\gamma$ to be finite throughout the manuscript. Then, for a measurable and translation invariant function $h:\mc{K}\to[0,\infty)$ we let 
	$$\gamma_h(\Psi)=\gamma \int h(K)\Q(\d K)$$
denote the \emph{$h$-intensity} of $\Psi$. For instance, if $h\equiv1$, then $\gamma_h(\Psi)$ is just the intensity of $\Psi\in\mc{T}_{}$.

A particle configuration $\varphi$ \emph{satisfies the hard-core constraint}, in symbols $\varphi\in E_{\ms{hc}}$, if and only if the interiors of particles in $\varphi$ are pairwise non-overlapping. That is, $\intt(K)\cap\intt(K')=\es$ for all distinct $K,K'\in\varphi$. 
Then, $\mc{T}_{\ms{hc}}=\mc{T}_{\Phi,\ms{hc}}\subset\mc{T}_{}$ denotes the subset of $\mc{T}_{}$ consisting of the stationary thinnings $\Psi$ with $\P(\Psi\in E_{\ms{hc}})=1$. In other words, elements of $\mc{T}_{\ms{hc}}$ describe stationary hard-core thinnings. If $\Psi\in\mc{T}_{\ms{hc}}$, by choosing $h=\lambda_d$ to be the Lebesgue measure in $\R^d$, the volume fraction is another special case of the $h$-intensity.

In this paper, we investigate processes in $\mc{T}_{\ms{hc}}$ exhibiting certain maximality properties. More precisely, we consider \emph{intensity maximal} and \emph{locally maximal} thinnings.

Loosely speaking, intensity-maximal thinnings are elements $\Psi \in \mc{T}_{\ms{hc}}$ with maximum possible $h$-intensity.  Here, the \emph{maximum $h$-intensity}
$$\gamma_{h,\ms{max}}=\gamma_{\Phi,h,\ms{max}}=\sup_{\Psi\in\mc{T}_{\ms{hc}}}\gamma_h(\Psi)$$
is the supremum over all $h$-intensities of stationary hard-core thinnings of $\Phi$. The thinning $\Psi\in\mc{T}_{\ms{hc}}$ is \emph{$h$-intensity maximal} if $\gamma_h(\Psi)=\gamma_{h,\ms{max}}$. An illustration of a volume-maximal thinning based on a cut-out of a Poisson Boolean model consisting of disks attached to the points of a Poisson point process is shown in Figure~\ref{simFig}.

Note that we do not require maximal thinnings to be factors (in the sense of~\cite{tmatch}) of the underlying particle process. That is, we do not require that there exists a deterministic and translation-covariant algorithm extracting the thinning from the underlying particle process. 

In particular, Mat\'ern I hard-core processes are elements of $\mc{T}_{\ms{hc}}$, so that $\gamma_{h,\ms{max}}$ is positive.
 In the following, we let $\mc{T}_{h,\ms{i-max}}$ denote the subset of $\mc{T}_{\ms{hc}}$ consisting of all $h$-intensity maximal thinnings. 

\begin{figure}[!htpb]
 \centering 
\begin{tikzpicture}[scale=1.0]
\fill[black!30!white] (1.119,2.862) circle (0.571);
\draw (1.119,2.862) circle (0.571);
\fill[black!30!white] (0.028,3.191) circle (0.586);
\draw (0.028,3.191) circle (0.586);
\fill[black!30!white] (4.001,0.919) circle (0.590);
\draw (4.001,0.919) circle (0.590);
\fill[black!30!white] (4.496,3.976) circle (0.502);
\draw (4.496,3.976) circle (0.502);
\fill[black!30!white] (0.742,1.520) circle (0.522);
\draw (0.742,1.520) circle (0.522);
\fill[black!30!white] (4.225,2.044) circle (0.591);
\draw (4.225,2.044) circle (0.591);
\fill[black!30!white] (1.146,3.292) circle (0.579);
\draw (1.146,3.292) circle (0.579);
\fill[black!30!white] (1.591,0.752) circle (0.505);
\draw (1.591,0.752) circle (0.505);
\fill[black!30!white] (4.787,0.308) circle (0.515);
\draw (4.787,0.308) circle (0.515);
\fill[black!30!white] (4.663,1.027) circle (0.568);
\draw (4.663,1.027) circle (0.568);
\fill[black!30!white] (1.582,0.348) circle (0.554);
\draw (1.582,0.348) circle (0.554);
\fill[black!30!white] (3.801,4.268) circle (0.563);
\draw (3.801,4.268) circle (0.563);
\fill[black!30!white] (2.605,1.641) circle (0.531);
\draw (2.605,1.641) circle (0.531);
\fill[black!30!white] (2.113,4.973) circle (0.591);
\draw (2.113,4.973) circle (0.591);
\fill[black!30!white] (4.070,1.596) circle (0.598);
\draw (4.070,1.596) circle (0.598);
\fill[black!30!white] (2.033,4.562) circle (0.520);
\draw (2.033,4.562) circle (0.520);
\fill[black!30!white] (1.157,0.183) circle (0.537);
\draw (1.157,0.183) circle (0.537);
\fill[black!30!white] (1.618,0.288) circle (0.549);
\draw (1.618,0.288) circle (0.549);
\fill[black!30!white] (4.206,3.981) circle (0.525);
\draw (4.206,3.981) circle (0.525);
\fill[black!30!white] (0.948,2.512) circle (0.575);
\draw (0.948,2.512) circle (0.575);
\fill[black!30!white] (2.800,2.667) circle (0.591);
\draw (2.800,2.667) circle (0.591);
\fill[black!30!white] (4.507,3.798) circle (0.506);
\draw (4.507,3.798) circle (0.506);
\fill[black!30!white] (3.861,1.448) circle (0.529);
\draw (3.861,1.448) circle (0.529);
\fill[black!30!white] (0.546,3.813) circle (0.500);
\draw (0.546,3.813) circle (0.500);
\fill[black!30!white] (1.630,2.452) circle (0.526);
\draw (1.630,2.452) circle (0.526);
\fill[black!70!white] (3.407,3.753) circle (0.600);
\draw (3.407,3.753) circle (0.600);
\fill[black!70!white] (1.766,2.198) circle (0.557);
\draw (1.766,2.198) circle (0.557);
\fill[black!70!white] (2.404,1.018) circle (0.568);
\draw (2.404,1.018) circle (0.568);
\fill[black!70!white] (1.095,1.237) circle (0.581);
\draw (1.095,1.237) circle (0.581);
\fill[black!70!white] (3.157,4.944) circle (0.572);
\draw (3.157,4.944) circle (0.572);
\fill[black!70!white] (2.116,3.592) circle (0.598);
\draw (2.116,3.592) circle (0.598);
\fill[black!70!white] (4.592,3.437) circle (0.549);
\draw (4.592,3.437) circle (0.549);
\fill[black!70!white] (1.134,4.710) circle (0.573);
\draw (1.134,4.710) circle (0.573);
\fill[black!70!white] (4.359,4.648) circle (0.506);
\draw (4.359,4.648) circle (0.506);
\fill[black!70!white] (0.171,2.345) circle (0.580);
\draw (0.171,2.345) circle (0.580);
\fill[black!70!white] (0.779,3.434) circle (0.516);
\draw (0.779,3.434) circle (0.516);
\fill[black!70!white] (3.498,2.182) circle (0.573);
\draw (3.498,2.182) circle (0.573);
\fill[black!70!white] (4.476,0.071) circle (0.552);
\draw (4.476,0.071) circle (0.552);
\fill[black!70!white] (3.338,0.083) circle (0.581);
\draw (3.338,0.083) circle (0.581);
\fill[black!70!white] (4.705,1.942) circle (0.553);
\draw (4.705,1.942) circle (0.553);
\end{tikzpicture}
  \caption{Volume-maximal thinning on the cut-out of a Boolean model}
\label{simFig}  
\end{figure}
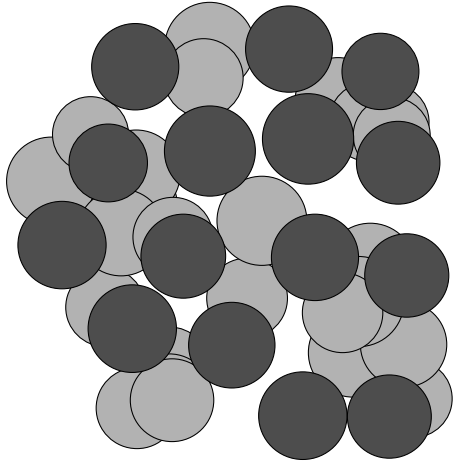

In our first main result, we use a subsequential limit argument to establish the existence of stationary $h$-intensity maximal hard-core thinnings. For a measurable and translation invariant function $h:\mc{K}\to[0,\infty)$, we let $\mc{K}_h \subset \mc{K}$ denote set of discontinuity points of $h$.
\begin{theorem}
\label{existenceProp}
    Assume that $\gamma_h(\LPhi)<\infty$ and that $\mc{K}_h$ is a zero-set with respect to $\Q$. Then, $\mc{T}_{h,\ms{i-max}}\ne\es$.
\end{theorem}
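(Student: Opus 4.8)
The plan is to realise $\gamma_{h,\ms{max}}$ as a limit along a maximising sequence of hard-core thinnings and to extract from that sequence a subsequential limit that is again a stationary hard-core thinning with the same $h$-intensity. First, $\mc T_{\ms{hc}}\ne\es$ since it contains Mat\'ern I thinnings, and every $\Psi\in\mc T_{\ms{hc}}$ is a realisationwise subset of $\Phi$, so $\gamma_h(\Psi)\le\gamma_h(\LPhi)<\infty$ by the moment assumption; hence $0<\gamma_{h,\ms{max}}<\infty$. Choose $\Psi_n\in\mc T_{\ms{hc}}$ with $\gamma_h(\Psi_n)\to\gamma_{h,\ms{max}}$; after the reduction described in the introduction all $\Psi_n$ are realisationwise subsets of one process $\Phi$ on a common space. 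I would encode $\Psi_n$ as the marked particle process $\widehat\Psi_n=\sum_{K\in\Phi}\delta_{(K,\one\{K\in\Psi_n\})}$ on $\mc K\times\{0,1\}$; its ground process is $\Phi$, and joint stationarity of $(\Phi,\Psi_n)$ makes the law of $\widehat\Psi_n$ shift invariant.

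Next I would run a compactness argument. Since $\mc K$ is locally compact and second countable, the space of locally finite counting measures on $\mc K\times\{0,1\}$ is Polish in the vague topology; and because every $\widehat\Psi_n$ has the fixed ground process $\Phi$ while the mark space $\{0,1\}$ is compact, the family of laws of $\widehat\Psi_n$ is tight. Along a subsequence, $\widehat\Psi_n$ converges in distribution to some $\widehat\Psi^*=\sum_{K\in\Phi^*}\delta_{(K,m(K))}$; put $\Psi^*=\{K:m(K)=1\}$, with limiting coupling $(\Phi^*,\Psi^*)$. Continuity of the mark-forgetting projection gives $\Phi^*\overset{d}{=}\Phi$; continuity of the diagonal shift gives joint stationarity of $(\Phi^*,\Psi^*)$; and the fact that being a subconfiguration is preserved under joint vague limits gives $\Psi^*\subseteq\Phi^*$ realisationwise. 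Thus $\Psi^*\in\mc T_\Phi$, and it remains to verify $\Psi^*\in\mc T_{\ms{hc}}$ and $\gamma_h(\Psi^*)=\gamma_{h,\ms{max}}$.

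The hard-core constraint in the limit is the main obstacle, since $E_{\ms{hc}}$ is not vaguely closed: a Hausdorff-convergent sequence of particles with pairwise disjoint interiors may converge to a pair with overlapping interiors. By Skorokhod's theorem I would realise $\widehat\Psi_n\to\widehat\Psi^*$ almost surely; on a relatively compact set of particles the approximants eventually agree with the limit in particle count and in marks, so every retained pair $K\ne K'$ of $\Psi^*$ is the limit of retained pairs $K_n\ne K'_n$ of $\Psi_n$ with $\intt(K_n)\cap\intt(K'_n)=\es$, and one must deduce $\intt(K)\cap\intt(K')=\es$. Rather than arguing pointwise, I would show that the intensity of the stationary point process of interior-overlapping retained pairs of $\Psi^*$ is zero, using a lower-semicontinuity (Fatou-type) estimate: the associated pair functional is lower semicontinuous off the discontinuity set of the interior-overlap indicator, a set that is negligible for the typical pair of grains under the regularity encoded in the hypotheses; since the corresponding functionals for $\Psi_n$ vanish identically, so does the one for $\Psi^*$, whence $\P(\Psi^*\in E_{\ms{hc}})=1$. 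This is where the assumptions --- $\mc K_h$ being $\Q$-null, $\Q^*\ll\Q$ where $\Q^*$ is the typical grain distribution of $\Psi^*$, and, if necessary, mild shape regularity of the grains --- are genuinely used, to exclude the pathological limits above.

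It then remains to check $\gamma_h(\Psi^*)=\gamma_{h,\ms{max}}$. The inequality $\le$ is immediate from $\Psi^*\in\mc T_{\ms{hc}}$. For $\ge$, write $\gamma_h(\Psi_n)=\E\sum_{K\in\Psi_n}h(K)\one\{c(K)\in[0,1]^d\}$ and note that the summand is dominated by $\sum_{K\in\Phi}h(K)\one\{c(K)\in[0,1]^d\}$, which is integrable by the moment assumption, so this family of random variables is uniformly integrable. Because $\mc K_h$ is translation invariant and $\Q$-null and $\Q^*\ll\Q$, almost surely $h$ is continuous at every particle of $\Psi^*$ and $\Psi^*$ has no particle centred on $\partial[0,1]^d$, so the functional $\varphi\mapsto\sum_{K\in\varphi}h(K)\one\{c(K)\in[0,1]^d\}$ is vaguely continuous at $\Psi^*$; combining the continuous mapping theorem with uniform integrability yields $\gamma_h(\Psi_n)\to\gamma_h(\Psi^*)$ along the subsequence. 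Hence $\gamma_h(\Psi^*)=\gamma_{h,\ms{max}}$, so $\Psi^*\in\mc T_{h,\ms{i-max}}$ and $\mc T_{h,\ms{i-max}}\ne\es$. Steps one, two and four are soft compactness and uniform-integrability arguments; the genuine difficulty is the stability of the hard-core constraint under the subsequential limit in Step three.
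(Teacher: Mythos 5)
Your overall strategy --- maximising sequence, tightness, subsequential weak limit, uniform integrability plus $\Q$-negligibility of $\mc{K}_h$ for the convergence of the $h$-intensities, and continuity of shifts for stationarity of the limit --- is exactly the route the paper takes; your marked-process encoding $\widehat\Psi_n$ is in fact a welcome clarification of a point the paper leaves implicit, namely that one must pass to the limit in the \emph{pair} $(\Phi,\Psi_n)$ in order for the limit to be a realizationwise thinning of a copy of $\Phi$ at all.

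The divergence, and the gap, is in your Step three. The paper disposes of the hard-core property in one line: $E_{\ms{hc}}$ is a closed set of configurations, so the Portmanteau theorem gives $\P(\Psi^*\in E_{\ms{hc}})\ge\limsup_i\P(\Psi_{n_i}\in E_{\ms{hc}})=1$. Closedness does hold in the standard setting of convex grains: if $K_n\to K$ and $K_n'\to K'$ in the Hausdorff metric with all sets convex and $\intt(K)\cap\intt(K')\supset B_\varepsilon(x)$, then the support-function characterisation of convex bodies yields $B_{\varepsilon/2}(x)\subset K_n\cap K_n'$ for large $n$, so disjointness of interiors passes to the limit. The pathology you describe (a Hausdorff-convergent sequence of interior-disjoint particles whose limit pair has overlapping interiors) can only be realised with non-convex grains, e.g.\ finite nets of a cube; in the convex setting you should simply invoke closedness and Portmanteau. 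Your proposed substitute is not a proof as it stands: to show that the pair process of interior-overlapping retained pairs of $\Psi^*$ has intensity zero by a Fatou-type argument, you would need the discontinuity set of the overlap indicator to be negligible for the second factorial moment measure of the limiting pair process, and neither this, nor the ``mild shape regularity,'' nor the absolute continuity $\Q^*\ll\Q$ you invoke are hypotheses of the theorem --- the only regularity assumed is that $\mc{K}_h$ is $\Q$-null, which constrains $h$, not the geometry of overlaps. So either restrict to convex grains and use the one-line closedness argument, or state and prove the missing lower-semicontinuity lemma under explicitly added hypotheses; as written, the step you yourself identify as the crux is not established.
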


Ideally, we would like to have an explicit algorithm generating stationary maximal hard-core thinnings from a given configuration of particles. One idea could be to start from a suitable stationary tessellation and to optimize the configuration within the different cells separately. More precisely, assume that in addition to $\Phi$ there exists a random tessellation $\Xi$ such that $\Phi$ and $\Xi$ are defined on the same probability space and such that the pair $(\Phi,\Xi)$ is jointly stationary. Then, we consider the thinning $\Phi^-_{\Xi,\ms{max}}$ of $\Phi$ whose configuration in a cell $\Xi_i$ is described as follows. Starting from the family 
$$\Phi_{\Xi_i}^-=\{K\in\Phi:\, K\subset\Xi_i\}$$
of all particles of $\Phi$ that are entirely contained within the cell $\Xi_i$, we choose the hard-core thinning $\Phi^-_{\Xi_i,\ms{max}}$ of $\Phi_{\Xi_i}^-$ achieving the maximal aggregate $h$-value. If there are several possible configurations achieving this maximal value, we choose one according to some translation invariant rule. 

Since the thinned particles are contained in their corresponding cells, assembling them into a single configuration $\Phi^-_{\Xi,\ms{max}}=\cup_{i\ge1}\Phi^-_{\Xi_i,\ms{max}}$ preserves the hard-core property. Since particles intersecting cell boundaries do not enter the optimization, this algorithm does not lead to an $h$-maximal thinning. Nevertheless, in the next result, which can be seen as generalization of~\cite[Satz 5.2.5]{hoerigPhD},  the maximal intensity is approached arbitrarily closely. We let $A \oplus A' = \{a + a':\, a \in A, a' \in A'\}$ denote the Minkowski sum of $A,A' \subset \R^d$. Additionally, for 
a family of stationary random tessellations $\{\Xi(k)\}_{k\ge1}$, certain typical isoperimetric-type coefficients of the form 
\begin{align}
	\label{isoEq}
	\frac{\E\lambda_d(\partial\Xi(k)^*\oplus[-m,m]^d)}{\E\lambda_d(\Xi(k)^*)},
\end{align}
are considered, where $\Xi(k)^*$ denotes the typical cell of the tessellation $\Xi(k)$.

\begin{theorem}
\label{approxThm}
    Let $\{\Xi(k)\}_{k\ge1}$ be a family of stationary random tessellations such that for every $k\ge1$ the pair $(\Xi(k),\Phi)$ is jointly stationary. Also assume that for every $m\ge1$ the typical isoperimetric-type coefficients in~\eqref{isoEq} tend to 0 as $k \to \infty$. Then,
$$\lim_{k\to\infty}\gamma_h(\Phi^-_{\Xi(k),\ms{max}})=\gamma_{h,\ms{max}}.$$
\end{theorem}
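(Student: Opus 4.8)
The plan is to establish the two bounds $\limsup_{k\to\infty}\gamma_h(\Phi^-_{\Xi(k),\ms{max}})\le\gamma_{h,\ms{max}}$ and $\liminf_{k\to\infty}\gamma_h(\Phi^-_{\Xi(k),\ms{max}})\ge\gamma_{h,\ms{max}}$ separately. The first is immediate from the discussion preceding the statement: each $\Phi^-_{\Xi(k),\ms{max}}$ is a stationary hard-core thinning of $\Phi$, i.e.\ an element of $\mc{T}_{\ms{hc}}$, so $\gamma_h(\Phi^-_{\Xi(k),\ms{max}})\le\gamma_{h,\ms{max}}$ by the very definition of the maximum $h$-intensity. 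For the second bound I assume $\gamma_{h,\ms{max}}<\infty$; the case $\gamma_{h,\ms{max}}=\infty$ is handled by the same argument, replacing the near-maximal thinning below by one of arbitrarily large (but finite) $h$-intensity. So fix $\epsilon>0$ and pick $\Psi\in\mc{T}_{\ms{hc}}$ with $\gamma_h(\Psi)>\gamma_{h,\ms{max}}-\epsilon$. Since $\Psi$ and $\Xi(k)$ are each jointly stationary with $\Phi$, after a further extension of the probability space we may realise the triple $(\Phi,\Psi,\Xi(k))$ jointly stationarily, for definiteness with $\Psi$ and $\Xi(k)$ conditionally independent given $\Phi$.

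The core of the lower bound is a localisation of $\Psi$ to the interiors of the cells of $\Xi(k)$. Call a particle $K$ \emph{$m$-small} if $K-c(K)\subset[-m,m]^d$, write $\Xi_i(k)^{(m)}=\{x\in\R^d:x+[-m,m]^d\subset\Xi_i(k)\}$ for the erosion of the cell $\Xi_i(k)$ and $N_k=\partial\Xi(k)\oplus[-m,m]^d$ for the inflated skeleton, where $\partial\Xi(k)=\bigcup_i\partial\Xi_i(k)$. If $K\in\Psi$ is $m$-small with $c(K)\in\Xi_i(k)^{(m)}$ then $K\subset c(K)+[-m,m]^d\subset\Xi_i(k)$, so $K\in\Phi^-_{\Xi_i(k)}$; hence the collection $\Psi^\dagger$ of all $m$-small $K\in\Psi$ with $c(K)\in\bigcup_i\Xi_i(k)^{(m)}$ restricts, inside each cell $\Xi_i(k)$, to a hard-core thinning of $\Phi^-_{\Xi_i(k)}$. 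By the cellwise maximality of $\Phi^-_{\Xi_i(k),\ms{max}}$, and since the two cellwise-decomposable processes $\Phi^-_{\Xi(k),\ms{max}}$ and $\Psi^\dagger$ each attribute every particle to a unique cell, comparing aggregate $h$-values cellwise (via the Palm distribution of the cell process, or a large-window average) yields $\gamma_h(\Phi^-_{\Xi(k),\ms{max}})\ge\gamma_h(\Psi^\dagger)$. A point whose surrounding $[-m,m]^d$-box avoids $\partial\Xi(k)$ lies in some $\Xi_i(k)^{(m)}$, so $\bigcup_i\Xi_i(k)^{(m)}\supset\R^d\setminus N_k$, and therefore
$$\gamma_h(\Phi^-_{\Xi(k),\ms{max}})\ \ge\ \gamma_h(\Psi)-\rho(m)-S_k(m),$$
where $\rho(m)=\E\sum_{K\in\Psi}h(K)\one\{c(K)\in[0,1]^d,\ K\text{ not }m\text{-small}\}$ and $S_k(m)=\E\sum_{K\in\Psi}h(K)\one\{c(K)\in[0,1]^d\cap N_k,\ K\text{ }m\text{-small}\}$. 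Since every particle is $m$-small once $m$ is large and $\gamma_h(\Psi)\le\gamma_{h,\ms{max}}<\infty$, dominated convergence gives $\rho(m)\to0$ as $m\to\infty$. Hence, if $S_k(m)\to0$ as $k\to\infty$ for each fixed $m$, then $\liminf_k\gamma_h(\Phi^-_{\Xi(k),\ms{max}})\ge\gamma_{h,\ms{max}}-\epsilon-\rho(m)$ for every $m$, and letting $m\to\infty$ and then $\epsilon\to0$ completes the proof.

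It remains to show $S_k(m)\to0$ as $k\to\infty$, and I expect this to be the main obstacle. The hypothesis on the coefficients in~\eqref{isoEq} translates into the purely geometric statement that the volume fraction $\P(0\in N_k)$ of the inflated skeleton tends to $0$: bounding $\one\{0\in N_k\}\le\sum_i\one\{0\in\partial\Xi_i(k)\oplus[-m,m]^d\}$ and applying the Campbell formula for the stationary cell process gives $\P(0\in N_k)\le\E\lambda_d(\partial\Xi(k)^*\oplus[-m,m]^d)/\E\lambda_d(\Xi(k)^*)$. The difficulty is that $S_k(m)$ measures mass through $\Psi$ rather than through Lebesgue measure, and $\Psi$, inheriting the correlations of $\Phi$, is allowed to depend on $\Xi(k)$, so the smallness of $\P(0\in N_k)$ does not by itself bound $S_k(m)$. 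My plan is to decompose $\partial\Xi(k)$ into the closed $(d-1)$-dimensional facets of the cells, apply the refined Campbell theorem to the jointly stationary pair formed by $\Psi$ and the facet process of $\Xi(k)$, bound the $h$-mass of the $m$-small particles of $\Psi$ meeting the typical facet $F^*$ by the $h$-mass of those with centre in $F^*\oplus[-m,m]^d$, and use an elementary tube estimate of the form $\lambda_d(F^*\oplus[-m,m]^d)\le 2m\,\mc{H}^{d-1}(F^*)+(\text{terms involving only lower-dimensional content of }F^*)$. Since~\eqref{isoEq}$\to0$ forces the $m$-erosion of the typical cell, hence of the typical facet, to fill almost all of it (so that cells, and facets, grow), the lower-order terms become negligible relative to the first, and the first, multiplied by the facet intensity, is at most a fixed multiple of the coefficient in~\eqref{isoEq} and therefore tends to $0$. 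The delicate point, which I expect to require the bulk of the work, is the passage to the limit in the facet-Palm expectation of the $h$-mass of $\Psi$ near $F^*$ when $\Psi$ and $\Xi(k)$ are dependent: one must rule out that the mass of $\Psi$ concentrates near the random skeleton even though the latter has vanishing $(d-1)$-content density. I expect this to be controlled using $\gamma_h(\Psi)<\infty$ together with the refined Campbell theorem for $\Psi$, exploiting that a grain contributing to $S_k(m)$ must meet a facet, via a dominated-convergence argument against an integrable dominating variable furnished by the typical grain of $\Psi$. Combining $S_k(m)\to0$ with the reductions above yields the claim.
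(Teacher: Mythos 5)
Your overall architecture differs from the paper's. The paper sandwiches $\gamma_{h,\ms{max}}$ between $\gamma_h(\Phi^-_{\Xi(k),\ms{max}})$ and the $h$-intensity of a relaxed thinning $\Phi^+_{\Xi(k),\ms{max}}$ (cellwise optimization over all grains with \emph{center} in a cell, hard-core enforced only within cells), proves $\gamma_{h,\ms{max}}\le\gamma_h(\Phi^+_{\Xi(k),\ms{max}})$ by mass transport, and then shows the gap $\gamma_h(\Phi^+)-\gamma_h(\Phi^-)$ is controlled by the $h$-mass of grains of $\Phi$ hitting the skeleton. You instead prove the lower bound by localizing a near-optimal $\Psi$ to the $m$-erosions of the cells. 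Your skeleton is sound up to the final step: the upper bound is correct, the cellwise comparison $\gamma_h(\Phi^-_{\Xi(k),\ms{max}})\ge\gamma_h(\Psi^\dagger)$ is a legitimate mass-transport/refined-Campbell argument of exactly the kind the paper uses for $\Phi^+$, and $\rho(m)\to0$ is fine.

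The genuine gap is $S_k(m)\to0$, which you correctly identify as the obstacle but do not close; the facet-process/Palm route you sketch is speculative and does not obviously defuse your own objection about $\Psi$ concentrating near the random skeleton. The fix is much simpler and is precisely the device the paper uses at the analogous point. Since $c(K)\in[0,1]^d\cap N_k$ forces $[0,1]^d\cap N_k\ne\es$, you have
$$S_k(m)\ \le\ \E\Big[\one\{[0,1]^d\cap N_k\ne\es\}\sum_{K\in\Phi:\,c(K)\in[0,1]^d}h(K)\Big],$$
where you dominate the $\Psi$-sum by the $\Phi$-sum. The random variable $\sum_{K\in\Phi,\,c(K)\in[0,1]^d}h(K)$ is a \emph{fixed} integrable variable not depending on $k$ (this uses $\gamma_h(\LPhi)<\infty$, which the paper also needs here), and $\P([0,1]^d\cap N_k\ne\es)\le\P([-(m+1),m+1]^d\cap\partial\Xi(k)\ne\es)$ is bounded by the typical isoperimetric-type coefficient at level $m+1$ via the Campbell formula for the cell process, hence tends to $0$. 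Uniform integrability then gives $S_k(m)\to0$; no facet decomposition, tube estimate, or control of the joint law of $\Psi$ and $\Xi(k)$ beyond joint stationarity is needed. With that substitution your proof closes and is a valid, mildly different route to the theorem.
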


In Section~\ref{approxProofSec}, we provide two examples of tessellations whose typical isoperimetric-type coefficients tend to 0. First, we consider Poisson-Voronoi tessellations of decreasing intensities whose underlying Poisson point process is assumed to be independent of the particle process $\Phi$. 

As a second example, we take up the construction from~\cite{timar} and consider a family of stationary Voronoi tessellations whose process of cell centers is a factor of the particle process $\Phi$, in the sense that it can be expressed as a measurable function of $\Phi$.

As an alternative to describing maximal hard-core thinnings via their intensity, we now propose a realizationwise characterization based on a local maximality property. Loosely speaking, swapping a finite number of grains in the thinning by a finite number of grains outside the thinning should not lead to a net increase in $h$-values if the swap preserves the hard-core property.
To be more precise, let $\psi,\varphi$ be locally finite configurations of convex grains such that $\psi \in E_{\ms{hc}}$ and $\psi\subset\varphi$. We say that $\psi$ is a \emph{locally $h$-maximal thinning of $\varphi$} (short: \emph{locally $h$-maximal}) if whenever $\psi'\in E_{\ms{hc}}$ is such that $\psi'\subset\varphi$, $\psi\Delta\psi'=(\psi\setminus \psi')\cup(\psi'\setminus \psi)$ is finite and $\psi'\ne\psi$, then
$$\sum_{K \in \psi'\setminus \psi}h(K)<\sum_{K \in \psi\setminus \psi'}h(K).$$
A stationary random thinning is \emph{locally maximal} if and only if the event of being locally maximal has probability $1$.

Next, we show that under a suitable moment condition, almost sure local maximality and intensity-maximality are equivalent. We note that quite similar moment conditions appear naturally in the investigation of densities of additive functionals for particle processes, see~\cite[Section 9.2]{sWeil}.  In the following, $\mc{T}_{h,\ms{\ell-max}}$ denotes the subset of $\mc{T}_{\ms{hc}}$ consisting of all almost-surely locally $h$-maximal thinnings with respect to $\Phi\circ S$.
\begin{theorem}
\label{localOptThm}
If $\gamma_h(\LPhi) < \infty$, then $\mc{T}_{h,\ms{i-max}}\subset\mc{T}_{h,\ms{\ell-max}}$. Moreover, the identity 
$$\mc{T}_{h,\ms{i-max}}=\mc{T}_{h,\ms{\ell-max}}$$
 holds under the additional moment condition
\begin{align}
\label{momCond1}
\int \lambda_d(K\oplus [-1,1]^d)h(K)\Q(\d K)<\infty.
\end{align}
\end{theorem}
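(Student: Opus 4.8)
The two inclusions I would prove separately, but both would use the same device: a finite rearrangement of one hard‑core thinning by a piece of another changes the $h$-intensity by an amount that the Campbell formula makes explicit, and maximality must forbid any such change from being favourable.

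\medskip
\noindent\textbf{Intensity‑maximality implies local maximality.}
I would argue by contraposition. Suppose $\Psi\in\mc{T}_{\ms{hc}}$ is not almost surely locally $h$-maximal; I would build $\Psi'\in\mc{T}_{\ms{hc}}$ with $\gamma_h(\Psi')>\gamma_h(\Psi)$, so that $\Psi\notin\mc{T}_{h,\ms{i-max}}$. Since any violation of local maximality involves only finitely many grains, $\sigma$-additivity and stationarity produce numbers $r,R_0>0$ and an event of positive probability carrying a finite $\psi'\in E_{\ms{hc}}$ with $\psi'\subset\Phi$, $\psi'\neq\Psi$, all grains of $\psi'\Delta\Psi$ of diameter at most $R_0$ and centred in $[-r,r]^d$, and $\sum_{K\in\psi'\setminus\Psi}h(K)\ge\sum_{K\in\Psi\setminus\psi'}h(K)$. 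After discarding from $\Psi\setminus\psi'$ the grains that do not actually block $\psi'\setminus\Psi$ (which can only strengthen the inequality and preserves the hard‑core property), one may assume the rearrangement is strictly improving unless it is exactly $h$-neutral. I would then select one such $\psi'=\psi'(\omega)$ measurably, observe that the stationary set of shifts $x$ for which $\theta_{-x}\omega$ admits the selected rearrangement has positive density, and pass to a Mat\'ern‑type hard‑core subset of this set whose points are more than $2(r+R_0)$ apart, so that the associated rearrangements have disjoint supports. Performing all surviving rearrangements simultaneously gives $\Psi'\in\mc{T}_{\ms{hc}}$, and the Campbell formula yields
$$\gamma_h(\Psi')-\gamma_h(\Psi)=\rho\cdot\E_0\Big[\sum_{K\in\psi'\setminus\Psi}h(K)-\sum_{K\in\Psi\setminus\psi'}h(K)\Big]\ge 0,$$
where $\rho>0$ is the intensity of the surviving locations and $\E_0$ the corresponding Palm expectation; the hypothesis $\gamma_h(\LPhi)<\infty$ guarantees that all $h$-values appearing here are integrable. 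If a strictly improving rearrangement occurs with positive probability the right‑hand side is strictly positive, contradicting $h$-intensity maximality. The only remaining case — that with positive probability the sole available non‑trivial finite rearrangements are exactly $h$-neutral — must be ruled out separately, and this is the one delicate point of this direction.

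\medskip
\noindent\textbf{Local maximality implies intensity‑maximality under \eqref{momCond1}.}
Let $\Psi\in\mc{T}_{h,\ms{\ell-max}}$ and let $\tilde\Psi\in\mc{T}_{\ms{hc}}$ be arbitrary; after extending the probability space I would assume $\Phi,\Psi,\tilde\Psi$ jointly stationary. It suffices to show $\gamma_h(\tilde\Psi)\le\gamma_h(\Psi)$, since taking the supremum over $\tilde\Psi$ then gives $\gamma_h(\Psi)\ge\gamma_{h,\ms{max}}$ and hence equality. Fix cut‑offs $R<R'$ and, for $n\ge1$, put $W_n=[-n,n]^d$ and $W_n^-=W_n\ominus[-2R,2R]^d$. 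I would consider the finite rearrangement of $\Psi$ that removes every $K\in\Psi$ with $K\subset W_n$ and diameter at most $R'$, and adds every $K'\in\tilde\Psi$ with $K'\subset W_n^-$, diameter at most $R$, whose interior meets no retained grain of $\Psi$. Using the buffer $2R$ together with $\operatorname{diam}(K')\le R$, one checks this is a valid hard‑core rearrangement contained in $\Phi$ with finite symmetric difference from $\Psi$, so local maximality forces, almost surely, its total added $h$-value to be at most its total removed $h$-value. Taking expectations, dividing by $\lambda_d(W_n)$ and applying the Campbell formula, the removed side tends to $\gamma\int\one\{\operatorname{diam}(K)\le R'\}h(K)\,\Q(\d K)\le\gamma_h(\Psi)$ (for $\Psi$), while the added side is at least $\gamma\int\one\{\operatorname{diam}(K')\le R\}h(K')\,\Q(\d K')$ (for $\tilde\Psi$) minus the $h$-density of small $\tilde\Psi$-grains blocked by retained $\Psi$-grains. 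By the choice of the buffer this blocking can only come from $\Psi$-grains of diameter exceeding $R'$; using $\operatorname{diam}(K)\lesssim_d\lambda_d(K\oplus[-1,1]^d)$ and a Steiner‑type bound $\lambda_d(K\oplus[-R,R]^d)\lesssim_{d,R}1+\lambda_d(K\oplus[-1,1]^d)$, the condition \eqref{momCond1} lets one bound this blocking density by a constant times the (volume) density of the region swept out by the $[-R,R]^d$-enlargements of the diameter‑$>R'$ grains of $\Psi$, which tends to $0$ as $R'\to\infty$. Letting $n\to\infty$, then $R'\to\infty$, then $R\to\infty$ (the last limit only using $\gamma_h(\tilde\Psi)<\infty$) gives $\gamma_h(\tilde\Psi)\le\gamma_h(\Psi)$; the $W_n$ versus $W_n^-$ boundary corrections are $O(n^{d-1})$ and vanish after normalization, again by the first moment built into \eqref{momCond1}.

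\medskip
\noindent\textbf{Where the difficulty sits.}
The bulk of the work is in the second implication, in making those two approximation errors genuinely negligible. The boundary error is routine via Steiner's formula and the $h$-weighted first moment of $\lambda_d(K\oplus[-1,1]^d)$, but controlling the $h$-mass of $\tilde\Psi$ that must be discarded in the neighbourhood of very large grains of $\Psi$ requires combining the hard‑core structure of $\tilde\Psi$ with the precise form of \eqref{momCond1}; it is here that the enlargement $K\oplus[-1,1]^d$ weighted by $h$ is essential, and I expect this estimate to be the main obstacle. In the first implication the corresponding obstacle is softer: everything is immediate once a strictly improving finite rearrangement is available with positive probability, and only the exactly $h$-neutral case needs a separate argument.
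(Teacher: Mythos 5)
Your first direction is essentially the paper's argument: the paper likewise localizes a strictly improving swap in a box of fixed side length $m$, selects a positive-intensity family of such swaps via a Mat\'ern-type thinning with iid marks so that the swaps are pairwise compatible, performs them all simultaneously, and reads off the strict increase in $h$-intensity from a Campbell/cluster-process computation. The exactly $h$-neutral case you flag is not treated in the paper either (its proof passes directly from ``not locally maximal with positive probability'' to the existence of a \emph{strictly} improving swap), so you are, if anything, more careful on that point.

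Your second direction takes a genuinely different route. The paper never compares $\Psi$ with an arbitrary competitor: it compares $\Psi$ inside a large cube $[-m/2,m/2]^d$ with the cell-wise optimal thinning $\Phi^-_{\Xi(k),\ms{max}}$, removing \emph{every} grain of $\Psi$ that meets the cube and inserting every grain of $\Phi^-_{\Xi(k),\ms{max}}$ contained in it; letting $m\to\infty$ gives $\gamma_h(\Phi^-_{\Xi(k),\ms{max}})\le\gamma_h(\Psi)$, and Theorem~\ref{approxThm} then supplies $\gamma_h(\Psi)\ge\gamma_{h,\ms{max}}$. Condition~\eqref{momCond1} enters exactly where you place it, to make the $h$-mass of grains meeting $\partial[-m/2,m/2]^d$ of order $O(m^{d-1})$. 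Your direct comparison with an arbitrary $\tilde\Psi$ buys independence from Theorem~\ref{approxThm}, but pays for it with the blocking estimate, and that step is not correct as literally stated: you cannot bound the $h$-mass of the blocked $\tilde\Psi$-grains by a constant times the volume density of the enlarged large $\Psi$-grains, since $h$ is unbounded and the swept region is correlated with $\tilde\Psi$. Two repairs are available. Either argue via the Campbell/Palm formula for $\tilde\Psi$: the normalized blocked $h$-mass equals $\gamma\,\E^0\bigl[h(K')\one\{\exists K\in\Psi:\ \mathrm{diam}(K)>R',\ K\cap K'\ne\es\}\bigr]$, which tends to $0$ as $R'\to\infty$ by local finiteness and dominated convergence (using only $\gamma_h(\LPhi)<\infty$). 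Or, simpler, imitate the paper and remove every $\Psi$-grain meeting the window with no diameter truncation; then no retained $\Psi$-grain can block a $\tilde\Psi$-grain contained in the eroded window, the bulk blocking term disappears entirely, and only the boundary terms controlled by~\eqref{momCond1} remain.
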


Now, we know that stationary $h$-maximal hard-core thinnings exist. But are they also unique in a distributional sense?
In addition to existence, it is natural to consider distributional uniqueness. 
In general, we suspect that $\mc{T}_{h,\ms{\ell-max}}$ can consist of infinitely many distinct distributions of particle processes for large intensities and advertise as an open problem the development of a general explicit  non-asymptotic algorithmic description for at least \emph{some} of its members. 
Nevertheless, in many situations it turns out that stationary $h$-maximal hard-core thinnings are in fact distributionally unique. 
As opposed to existence, the issue of distributional uniqueness is more complex and the rest of the present paper is devoted to this topic.

First, distributional uniqueness should hold in the subcritical regime of continuum percolation, as we can choose a maximal thinning in each of the finite clusters. To make this idea rigorous, we impose additional assumptions on the distribution of particles. Indeed, imagine a configuration of two overlapping particles of equal volume that are disjoint from all other particles. Then, we are left with a choice as regards to which of the two particles should be part of a volume-maximal thinning. Therefore, we assume that all factorial moment measures of the marked point process $\{(c(K),h(K)):\,K\in\Phi\}$ are absolutely continuous. Under this assumption, any bounded connected component in the union of particles contains a distributionally unique $h$-maximal thinning. This suggests that if the particle process is in a subcritical regime, where with probability 1 the union of all particles does not contain an infinite connected component, then all elements of $\mc{T}_{h,\ms{\ell-max}}$ have the same distribution. Moreover, this distribution can be constructed as a factor from the law of the reference particle process by a simple thinning rule: Consider the connected components of the particle processes separately and inside each of them choose the almost surely uniquely determined $h$-maximal thinning.

More precisely, writing $\{\Phi_i\}_{i\ge1}$ for the collection of connected components of the union of particles in $\Phi$, we let $\Phi_{\ms{max}}$ denote the thinning of $\Phi$ obtained by selecting in each of the $\Phi_i$ the (almost surely uniquely determined) $h$-maximal hard-core subset. Hence, $\Phi_{\ms{max}}$ is a stationary hard-core thinning of $\Phi$. We show that every locally maximal thinning has the same distribution as $\Phi_{\ms{max}}$.

\begin{theorem}
\label{uniqGenThm}
Assume that the union of particles in $\Phi$ almost surely does not percolate and that all factorial moment measures of the marked point process $\{(c(K),h(K)):\,K\in\Phi\}$ are absolutely continuous. Then, every locally maximal thinning has the same distribution as $\Phi_{\ms{max}}$.
\end{theorem}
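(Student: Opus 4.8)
The plan is to prove the stronger, realizationwise statement that any locally maximal thinning $\Psi$ coincides almost surely with $\Phi_{\ms{max}}$: writing $\Psi\subset\Phi\circ S$ as in the definition of $\mc{T}$, I would show that $\Psi$ agrees almost surely with the configuration formed from $\Phi\circ S$ by the same rule that defines $\Phi_{\ms{max}}$ from $\Phi$; since $\Phi\circ S$ has the same law as $\Phi$, this gives the claimed distributional identity. Two facts drive the argument. First, since $\gamma_0<\infty$ the centre process of $\Phi$ is almost surely locally finite, so every bounded region meets only finitely many particles; combined with non-percolation this shows that, almost surely, each connected component $C_i$ of $\bigcup_{K\in\Phi}K$ is bounded and the corresponding cluster $\Phi_i=\{K\in\Phi:K\subset C_i\}$ is finite, so that the $\Phi_i$ partition $\Phi$ (the grains being connected). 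Moreover, particles in distinct clusters are disjoint, so that for a hard-core $\psi\subset\Phi$ and a hard-core $\psi'\subset\Phi_i$ the swapped configuration $(\psi\setminus\Phi_i)\cup\psi'$ is again hard-core. Second, absolute continuity forces the $h$-maximizer within each finite cluster to be unique.

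For the uniqueness step I would show that, almost surely, any two distinct finite subconfigurations of $\Phi$ have distinct total $h$-value. Cancelling the common part, it suffices to bound, for each $k,l\ge0$ with $k+l\ge1$, the probability that there exist disjoint $\chi,\chi'\subset\Phi$ with $|\chi|=k$, $|\chi'|=l$, all centres in a fixed box $[-R,R]^d$, and $\sum_{K\in\chi}h(K)=\sum_{K\in\chi'}h(K)$. By the definition of the $(k+l)$-th factorial moment measure $\alpha^{(k+l)}$ of the marked process $\{(c(K),h(K)):K\in\Phi\}$, this probability is at most $\alpha^{(k+l)}(N_R)$, where $N_R\subset(\R^d\times\R)^{k+l}$ is the set of tuples with centre coordinates in $[-R,R]^d$ and mark coordinates satisfying $s_1+\dots+s_k=s_{k+1}+\dots+s_{k+l}$. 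Since $k+l\ge1$, this relation defines a proper affine hyperplane, so $N_R$ is Lebesgue-null in $(\R^d\times\R)^{k+l}$ and hence $\alpha^{(k+l)}(N_R)=0$ by absolute continuity. Summing over $k$, $l$ and $R\ge1$ proves the claim; in particular, taking $k=1$ and $l=0$, almost surely no particle has $h$-value $0$. Consequently, on this event, for every finite cluster $\Phi_i$ the quantity $\sum_{K\in\psi}h(K)$ attains its maximum over hard-core $\psi\subset\Phi_i$ at a single such $\psi$, which by definition is $\Phi_{\ms{max}}\cap\Phi_i$.

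To conclude, I would rule out any other locally maximal thinning by a swap argument. Let $\Psi$ be locally maximal and argue on the intersection of the two almost sure events above with the event that $\Psi$ is locally maximal. If for some $i$ the trace $\Psi\cap\Phi_i$ were not $h$-maximal among hard-core subsets of $\Phi_i$, choose a hard-core $\psi'\subset\Phi_i$ with $\sum_{K\in\psi'}h(K)>\sum_{K\in\Psi\cap\Phi_i}h(K)$ and set $\tilde\Psi=(\Psi\setminus\Phi_i)\cup\psi'$. Then $\tilde\Psi$ is hard-core, $\tilde\Psi\subset\Phi$, $\tilde\Psi\Delta\Psi\subset\Phi_i$ is finite and $\tilde\Psi\ne\Psi$; cancelling the common part $\psi'\cap\Psi$ turns the strict inequality into $\sum_{K\in\tilde\Psi\setminus\Psi}h(K)>\sum_{K\in\Psi\setminus\tilde\Psi}h(K)$, contradicting local maximality of $\Psi$. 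Hence $\Psi\cap\Phi_i$ is $h$-maximal in $\Phi_i$ for every $i$, so by the uniqueness established above $\Psi\cap\Phi_i=\Phi_{\ms{max}}\cap\Phi_i$ for every $i$, and therefore $\Psi$ equals $\Phi_{\ms{max}}$ (formed from $\Phi\circ S$) almost surely; in particular the two have the same distribution. I expect the uniqueness step — identifying the correct null set $N_R$ and verifying that it carries no factorial-moment mass, which is where absolute continuity of all factorial moment measures enters — to be the main obstacle, whereas the reduction afforded by non-percolation and the swap argument are comparatively routine.
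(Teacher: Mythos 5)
Your proposal is correct and follows essentially the same route as the paper: decompose $\Phi$ into its almost surely finite connected components, use absolute continuity to get a unique per-component $h$-maximizer, and rule out any other locally maximal thinning by a swap confined to a single component. You in fact supply more detail than the paper on the uniqueness step (which the paper merely asserts), namely the factorial-moment-measure argument that the event of two distinct finite subconfigurations having equal total $h$-value is carried by a hyperplane and hence is null.
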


As we have seen in the discussion preceding Theorem~\ref{uniqGenThm}, dropping the absolute continuity assumption might destroy uniqueness of locally $h$-maximal distributions in the sense of Theorem~\ref{uniqGenThm}. In contrast, the effects of moving from the sub- to the supercritical regime are less clear. 

To make this more precise, we consider the Poisson Boolean model. That is, $\Phi$ is assumed to be a homogeneous Poisson particle process with some intensity $\gamma_0 \in (0,\infty)$ and some grain distribution $\Q_0$. 
Let $\gamma_{\ms{c}}=\gamma_{\ms{c},\Q_0}$ denote the critical intensity for continuum percolation in the Poisson Boolean model with $\Q_0$-distributed grains. If $\gamma_0 <\gamma_{\ms{c}}$, then, by Theorem~\ref{uniqGenThm}, there exists a distributionally unique stationary $h$-maximal hard-core thinning.
Next, we note that in many situations, there is at least a small range above $\gamma_{\ms{c}}$, where distributional uniqueness continues to hold. Indeed, elementary geometric reasoning shows that in some specific configurations, it is possible to single out particles that can never be part of a volume-maximal thinning. After disregarding them, the intensity of the remaining relevant particles becomes strictly smaller, so that the essential enhancement technology~\cite{strictGrim,strictIneq} brings one back into a subcritical regime. 
\begin{theorem}
\label{uniqBallThm}
Assume that $\Phi$ is a Poisson Boolean model of balls with intensity $\gamma_0 \in (0, \infty)$, and whose radius distribution is absolutely continuous with support bounded away from $0$ and $\infty$. Moreover, on the support of the radius distribution, the Lebesgue density is assumed to be bounded away from $0$. Then there exists $\gamma_{\ms{u}}>\gamma_{\ms{c}}$ such that if $\gamma_0 <\gamma_{\ms{u}}$, then all locally volume-maximal particle processes have the same distribution.
\end{theorem}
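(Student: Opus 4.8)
The plan is to remove from $\Phi$, by an explicit local rule, a family of balls of strictly positive intensity that can belong to no locally volume-maximal thinning, and then to turn the resulting loss of intensity into a strict increase of the percolation threshold via the essential-enhancement / strict-inequality technology of~\cite{strictGrim,strictIneq}. This puts the remaining particles into a subcritical regime for $\gamma_0$ in a nondegenerate interval above $\gamma_{\ms{c}}$, on which Theorem~\ref{uniqGenThm} applies to the reduced process and forces distributional uniqueness. Throughout, $h=\lambda_d$.

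First I would fix the exclusion rule. Let $r_{\ms{max}}<\infty$ be the supremum of the support of the radius distribution and call a ball $B$ of a configuration $\varphi$ \emph{shielded} if $\varphi$ contains a ball $B^*$ with $\intt(B)\cap\intt(B^*)\ne\es$, with $\lambda_d(B^*)\ge\lambda_d(B)$, and such that no ball of $\varphi\setminus\{B,B^*\}$ meets $\intt(B^*)$. Since all radii are at most $r_{\ms{max}}$, shieldedness of $B$ is decided by the particles of $\varphi$ within a fixed bounded distance of $c(B)$, so it is a local, translation-covariant property. If $\psi\in E_{\ms{hc}}$ is a thinning of $\varphi$ containing a shielded ball $B$ with shield $B^*$, then $B^*\notin\psi$, and every ball of $\varphi$ other than $B$ that meets $\intt(B^*)$ is absent from $\psi$; hence $\psi':=(\psi\setminus\{B\})\cup\{B^*\}$ again lies in $E_{\ms{hc}}$, differs from $\psi$ only in the finite nonempty set $\{B,B^*\}$, and satisfies $\sum_{K\in\psi'\setminus\psi}h(K)=\lambda_d(B^*)\ge\lambda_d(B)=\sum_{K\in\psi\setminus\psi'}h(K)$, so $\psi$ is not locally $h$-maximal. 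Consequently no locally volume-maximal thinning of $\Phi$ contains a shielded ball. It remains to check that the shielded balls of $\Phi$ have positive intensity: with positive probability a bounded window around the origin carries exactly two Poisson centres, lying so close together that their balls overlap, while no other centre comes near enough to meet the larger of these two balls; the assumed absolute continuity of the radius law, with density bounded away from $0$ on its support, makes such a configuration occur with positive probability.

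Next I would pass to the reduced process $\Phi'$, obtained from $\Phi$ by deleting every shielded ball. It is a measurable, translation-covariant functional of $\Phi$, hence stationary, with intensity $\gamma_0'$ that is strictly below $\gamma_0$ by the previous step and positive because isolated balls are never shielded. Its marked version $\{(c(K),h(K)):K\in\Phi'\}$ still has absolutely continuous factorial moment measures: those of the Poisson Boolean model of balls are absolutely continuous, as the centres are Lebesgue distributed on $\R^d$ and $h=\lambda_d$ evaluated on a ball is a smooth function of its absolutely continuous radius, and factorial moment measures can only shrink under thinning. Moreover any locally volume-maximal thinning $\Psi$ of $\Phi$ avoids all shielded balls, hence is a thinning of $\Phi'$, and local $h$-maximality with respect to $\Phi$ implies local $h$-maximality with respect to the smaller configuration $\Phi'$; thus $\Psi$ is an almost surely locally $h$-maximal stationary thinning of $\Phi'$.

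Finally I would regain subcriticality, which is the heart of the argument: I need $\gamma_{\ms{u}}>\gamma_{\ms{c}}$ such that the union of particles in $\Phi'$ almost surely does not percolate whenever $\gamma_0<\gamma_{\ms{u}}$. Deleting shielded balls is a local, translation-invariant \emph{diminishment} of the Poisson Boolean model, and it is \emph{essential}: one can exhibit a finite configuration in which a shielded ball is the unique link between two otherwise separated regions — its shield sitting as an isolated satellite, and no other ball of the configuration being itself shielded — so that removing that ball destroys a crossing. The strict-inequality technology then produces the desired $\gamma_{\ms{u}}>\gamma_{\ms{c}}$. With subcriticality of $\Phi'$ and the absolute-continuity property established, Theorem~\ref{uniqGenThm} applied to $\Phi'$ shows that every almost surely locally $h$-maximal thinning of $\Phi'$ has the distribution of the analogue $\Phi'_{\ms{max}}$ of $\Phi_{\ms{max}}$ built from $\Phi'$; since every locally volume-maximal thinning of $\Phi$ is, by the previous paragraph, such a thinning of $\Phi'$, all locally volume-maximal particle processes have this common distribution. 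The main obstacle will be the essential-enhancement step: one must transfer the arguments of~\cite{strictGrim,strictIneq} to this continuum diminishment and, in particular, handle the fact that $\Phi'$ is not monotone in $\gamma_0$ — adding a ball may both create connections and un-shield a previously shielded ball — so that both the strict inequality of critical intensities and the resulting genuine interval of subcriticality require more than a naive monotone coupling.
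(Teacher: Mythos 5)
Your proposal follows essentially the same route as the paper: your ``shielded'' balls are a more restrictive variant of the paper's \emph{dispensable} grains (Definition~\ref{dispDef}, which only asks that every ball hitting the replacement $K'$ also hit $K$, not that $K'$ be otherwise isolated), your reduction to the pruned process $\Phi'$ is exactly the easy direction of Proposition~\ref{uniqBall2Thm}, and the concluding appeal to an essential diminishment plus Theorem~\ref{uniqGenThm} is precisely the paper's strategy. The only caveat is that the step you yourself flag as the main obstacle --- obtaining $\gamma_{\ms{u}}>\gamma_{\ms{c}}$ for this non-monotone continuum diminishment --- is where essentially all of the paper's work lies: it introduces \emph{special} dispensable grains (dispensable grains with small radius and a bounded number of neighbours) to keep the diminishment locally controllable, sets up a two-parameter crossing function $\theta_n(p,q)$ with a Margulis--Russo formula (Lemma~\ref{mrLem}), and proves the differential inequality of Lemma~\ref{diffIneqProp} via several new geometric lemmas in the appendix that adapt the two-arm constructions of the strict-inequality literature to random radii; your sketch defers all of this to citation.
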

The general approach outlined in~\cite{strictIneq} is sufficiently flexible to be applicable to the problem described in Theorem~\ref{uniqBallThm}, although some care is needed to transfer the geometric constructions in~\cite{strictIneq} to the setting of random radii.

What happens for intensities that are substantially larger than the critical intensity $\gamma_{\ms{c}}$? Even on a heuristic level, it is not entirely clear what kind of behavior is expected. At first glance, the breaking of rotational symmetry in models from statistical physics~\cite{symBreak2,symBreak} could suggest that at high intensities a certain form of crystallization occurs. However, on the contrary, the philosophy of~\cite{gamWeight} predicts that long-range dependencies could disappear through substantial fluctuations in the grain sizes.

In order to put these speculations on a more rigorous foundation, we provide examples of functions $h$ and supercritical Poisson Boolean models of spherical grains with arbitrarily high intensity for which distributional uniqueness of locally $h$-maximal thinnings holds. In these examples, $h$ is a specific functional that is increasing in the grain volume, but exhibits substantially more pronounced fluctuations than the volume. In particular, this example is genuinely different from the essentially subcritical case discussed in Theorem~\ref{uniqGenThm}. 

More precisely, consider a Poisson Boolean model $\Phi$ in $\R^d$ with intensity $\gamma_0 > 0$ and random radii distributed uniformly on the interval $[0,1]$. Now, for $a\ge1$ we put 
$h_a(B_r(x))=\exp(ar)$. Our last main result shows distributional uniqueness of locally maximal thinnings holds for all sufficiently $a$.

\begin{theorem}
\label{highDensProp}
Assume that $\Phi$ is a Poisson Boolean model of balls with intensity $\gamma_0 > 0$, and whose radius distribution is uniform on $[0,1]$. If $a\ge1$ is sufficiently large, 
then all locally $h_a$-maximal particle processes have the same distribution.
\end{theorem}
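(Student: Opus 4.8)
The plan is to deduce distributional uniqueness from an almost-sure coincidence statement and then to prove that statement by a multiscale analysis exploiting the rapid growth of $h_a$ across radius scales. Existence is immediate: $h_a$ is continuous on $\mc{K}$, so $\mc{K}_{h_a}=\es$, and since $h_a\le e^a$ while grains have radius at most $1$, both $\gamma_{h_a}(\LPhi)<\infty$ and the moment condition~\eqref{momCond1} hold trivially; hence by Theorems~\ref{existenceProp} and~\ref{localOptThm} the class of locally $h_a$-maximal thinnings is nonempty. For uniqueness, let $\Psi_1,\Psi_2$ be locally $h_a$-maximal thinnings; after passing to a common extension in which $\Psi_1,\Psi_2$ are conditionally independent given $\Phi$ we may assume $(\Phi,\Psi_1,\Psi_2)$ jointly stationary with each $\Psi_i$ a realizationwise hard-core subset of $\Phi$ that is a.s.\ locally $h_a$-maximal, and it suffices to show $\Psi_1=\Psi_2$ a.s. Set $D:=\Psi_1\Delta\Psi_2$ and view it as a subgraph of the overlap graph of $\Phi$.

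The reduction to a percolation statement is deterministic. Within $D$, a grain of $\Psi_i\setminus\Psi_{3-i}$ is adjacent only to grains of $\Psi_{3-i}\setminus\Psi_i$ and overlaps no grain of $\Psi_i$; so if $C$ were a finite connected component of $D$, then $(\Psi_1\setminus C)\cup(\Psi_2\cap C)$ would be a hard-core subset of $\Phi$ differing from $\Psi_1$ only on the finite set $C$, whence local $h_a$-maximality of $\Psi_1$ gives $\sum_{K\in\Psi_2\cap C}h_a(K)<\sum_{K\in\Psi_1\cap C}h_a(K)$, while local $h_a$-maximality of $\Psi_2$ gives the reverse inequality --- a contradiction. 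Thus $D$ has no finite component, and it remains to show $\P(D\text{ has an infinite component})=0$ for $a$ large.

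For this I would first extract a priori local constraints on the grains of $D$. A grain $K=B_r(x)\in D$ is excluded from one of the $\Psi_i$, so $h_a(K)<\sum_{K'\in\Phi,\,\intt(K')\cap\intt(K)\ne\es}h_a(K')$, and every such excluding grain again lies in $D$. Fixing a gap $\delta>0$ and splitting the overlapping grains according to whether their radius is below $r-\delta$, one sees that $K\in D$ forces one of: (i) the event $\mc{S}_K$ that grains of $\Phi$ of radius $<r-\delta$ meeting $K$ already carry $h_a$-mass $\ge\tfrac12 h_a(K)$; (ii) the presence of an overlapping grain with radius within $\delta$ of $r$; or (iii) the presence of an overlapping, strictly larger grain of $D$. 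By the Mecke formula and the explicit (uniform) radius density, $\P(\mc{S}_K\mid K)\le 2^{d+1}\kappa_d\gamma_0 r^d a^{-1}e^{-a\delta}$, which is tiny for $a$ large; and since radii lie in $[0,1]$, iterating~(iii) terminates after finitely many steps at a grain for which (i) or (ii) holds. One then organizes this across dyadic radius scales $I_k=(2^{-k-1},2^{-k}]$, with a scale-adapted gap $\delta_k\asymp 2^{-k}$: the spatial displacement accumulated along an ascending chain through the scales is $\lesssim\sum_k 2^{-k}=O(1)$, and the density of scale-$k$ grains triggering $\mc{S}_K$ can, for $a$ large, be pushed uniformly below the percolation threshold of the Boolean model of scale-$k$ grains. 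A multiscale Peierls-type estimate over the resulting ``ambiguity clusters'' should then yield $\P(D\text{ percolates})=0$, hence $\Psi_1=\Psi_2$ a.s., which is the claim.

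I expect the main obstacle to be the renormalization in the last step, and in particular the near-tie alternative~(ii) at the coarsest scales: when $\gamma_0$ is large, a scale-$0$ grain typically does have an overlapping grain of comparable size, so (ii) alone is not a rare event. Dealing with this requires iterating the dichotomy of the previous paragraph recursively --- a near-tie between $K$ and $K'$ can contribute to $D$ only if $K'$ is itself not ``locally forced'', which again splits into $\mc{S}$-type, near-tie, or ascending alternatives --- and controlling the tail of the recursion depth. Showing these tails are summable for $a$ large, uniformly in $\gamma_0$, so that no infinite ambiguity cluster survives, is the technical heart of the argument; it is also the point where the proof genuinely departs from the subcritical mechanism behind Theorem~\ref{uniqGenThm}, since here the union of grains is supercritical and the decoupling comes entirely from the spread of grain sizes.
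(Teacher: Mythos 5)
Your reduction is exactly the paper's first step (Lemma~\ref{disagreePerc}): finite components of $\mc{G}(\Psi_1\Delta\Psi_2)$ contradict local maximality of one of the two thinnings, so it suffices to rule out disagreement percolation. You also correctly identify the two sources of decoupling that the paper exploits --- near-ties in radius are rare because the radius density is bounded, and chains of overlapping grains with increasing radii must terminate since radii live in $[0,1]$. However, the argument that $\Psi_1\Delta\Psi_2$ has no infinite component is not actually carried out: you reduce it to a ``multiscale Peierls-type estimate over ambiguity clusters'' whose summability you yourself defer as ``the technical heart of the argument.'' That deferred step is precisely where the difficulty lies, and your dyadic-scale setup has a concrete problem there: with gap $\delta_k\asymp 2^{-k}$ the near-tie/small-mass bounds degrade like $e^{-a2^{-k}}$, which is useless once $2^{-k}\lesssim 1/a$, and your alternative (ii) (an overlapping grain of comparable size) is, as you note, not rare at coarse scales for large $\gamma_0$; the proposed recursion on ``locally forced'' grains is not set up in a way that visibly terminates. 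So as written there is a genuine gap.

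For comparison, the paper closes this gap without any multiscale renormalization. It calls a grain $K_0$ \emph{$a^{2d}$-huge} if $h_{a^{2d}}(K_0)$ exceeds the total $h_{a^{2d}}$-mass of all \emph{strictly smaller} overlapping grains; by the Mecke formula this fails with probability $o(a^{-d})$ per grain (Lemma~\ref{dispHighProb}), because a failure requires either more than $a$ overlapping grains or a near-tie of width $a^{-3d/2}$ --- note the paper works with exponent $a^{2d}$ on spatial scale $a$, so even that tiny radius gap translates into an enormous $h$-gap $e^{-a^{d/2}}$, which is how the near-tie alternative is tamed uniformly in $\gamma_0$. Larger overlapping grains are not estimated probabilistically at all: they are handled deterministically by induction along the directed ``smaller-to-larger'' intersection graph (Lemma~\ref{depGraphLem}), and the clusters of that graph are localized whp because ascending chains of length $\asymp a$ have expected number $a^d(2^d\kappa_d)^{\lfloor a/4\rfloor}/\lfloor a/4\rfloor!\to 0$ (Lemma~\ref{goodHighProb}). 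Boxes where both properties hold (``$a$-good'' sites) then form a $4$-dependent site percolation that is highly supercritical by Liggett--Schonmann--Stacey, and a circuit of good boxes around any grain forces agreement (Lemma~\ref{shieldLem}). If you want to salvage your route, the ingredients you are missing are essentially these: a deterministic induction replacing your alternative (iii), and a single-scale (not multiscale) bound on the length of ascending chains.
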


In the proof of Theorem~\ref{highDensProp}, we will see that for large values of $a$, stationary $h_a$-maximal hard-core thinnings resemble packings based on the random sequential adsorption algorithm. The stabilization techniques used to show that random sequential adsorption is well-defined in a stationary setting~\cite{yuk01,yukichPacking}  play an essential r\^ole to establish distributional uniqueness of locally maximizing thinnings in the setting of Theorem~\ref{highDensProp}.

The rest of the present paper is organized as follows. First, in Sections~\ref{existSec},~\ref{approxProofSec} and~\ref{eqSec}, we prove Theorems~\ref{existenceProp},~\ref{approxThm} and~\ref{localOptThm}, respectively. Next, the issue of distributional uniqueness in the subcritical and barely supercritical regime is considered in Section~\ref{uniqGenSec}. Since the proof of Theorem~\ref{uniqBallThm} is based on delicate elementary geometric results, large parts of it are postponed to an appendix. Finally, in Section~\ref{fluctSec} we prove Theorem~\ref{highDensProp}.

\section{Proof of Theorem~\ref{existenceProp}}
\label{existSec}
In this section, we prove that $\mc{T}_{\ms{i-max}}(\LPhi)\ne\es$. 

%\begin{proof}[Proof of Theorem~\ref{existenceProp}]
\begin{proof}[Proof of Theorem~\ref{existenceProp}]
Since every particle process $\Psi \in \mc{T}_{}$ is a realizationwise subset of $\Phi\circ S$, the family of distributions of particle processes in $\mc{T}$ is tight~\cite[Proposition 11.1.VI]{pp1}. In particular, if $\{\Psi_n\}_{n\ge1}$ is a sequence of elements of $\mc{T}_{\ms{hc}}$ whose $h$-intensities converge to $\gamma_{h,\ms{max}}$, then there exists a subsequence $\{\Psi_{n_i}\}_{i\ge1}$ converging weakly to the distribution of a particle process $\Psi$. We claim that $\Psi\in\mc{T}_{h,\ms{i-max}}$.

Indeed, the family $\sum_{K\in\Psi_{n_i}}\one\{c(K)\in[0,1]^d\}h(K)$ is uniformly integrable since  $\gamma_h(\LPhi)<\infty$. Since the discontinuities of $h$ form a zero-set with respect to $\Q$, the $h$-intensity $\gamma_h(\Psi_{n_i})$ converges to $\gamma_h(\Psi)$ as $i$ tends to $\infty$. It remains to show that $\Psi\in\mc{T}_{\ms{hc}}$. First, note that $E_{\ms{hc}}$ is a closed set, so that by the Portmanteau theorem, 
$$\P(\Psi\in E_{\ms{hc}})\ge \limsup_{i\to\infty}\P_{n_i}(\Psi_{n_i}\in E_{\ms{hc}})=1.$$
	Finally, as shift operations are continuous~\cite[Proposition A2.3.V]{pp1}, weak limits of stationary particle processes are again stationary, which completes the proof.
\end{proof}

\section{Proof of Theorem~\ref{approxThm}}
\label{approxProofSec}
In order to prove Theorem~\ref{approxThm}  we introduce a thinning $\Psi^+_{\Xi(k)}$ of $\Phi$ that is $h$-maximal among all thinnings of $\Phi$ where hard-core constraints are only imposed within the cells. More precisely, let 
$$\Phi^+_{\Xi_i(k)}=\{K\in\Phi:\,c(K)\in\Xi_i(k)\}$$
denote the family of grains in $\Phi$ whose grain center is contained in the cell $\Xi_i(k)$ of $\Xi(k)$. Then, we let $\Phi_{\Xi_i(k),\ms{max}}^+$ denote a hard-core thinning of $\Phi^+_{\Xi_i(k)}$ that maximizes the aggregated $h$-values. In case that the maximizer is not unique, we choose one according to some translation invariant rule. 
		Finally, we put $\Phi_{\Xi(k),\ms{max}}^+=\cup_{i\ge1}\Phi_{\Xi_i(k),\ms{max}}^+$. Note that in contrast to $\Phi^-_{\Xi(k),\ms{max}}$, the thinning $\Phi_{\Xi(k),\ms{max}}^+$ is not necessarily hard core, as we allow overlaps between grains associated with different cells.

Now, we claim that 
\begin{align}
	\label{phiPlusEq}
	\gamma_h(\Psi)\le\gamma_h(\Phi_{\Xi(k),\ms{max}}^+),
\end{align}
holds for any stationary hard-core thinning $\Psi$ of $\Phi$. In particular, $\gamma_{h,\ms{max}}\le \gamma_h(\Phi_{\Xi(k),\ms{max}}^+)$.

	In order to see~\eqref{phiPlusEq}, we extend the original probability space so as to support a stationary pair $(\Psi, \Xi(k))$ such that the distributions of $(\Xi(k), \Phi)$ and $(\Psi, \Phi)$ do not change. We may choose $\Xi(k)$ and $\Psi$ conditionally independent given $\Phi$.
%	measure $\P$ to a larger space on which $\Psi$ and $\Xi(k)$ are jointly stationary by putting
%$$\P((\Xi(k),\Psi)\in(\d\xi,\d\psi))=\E[\P(\d\xi|\Phi)\P(\d\psi|\Phi)].$$
	Now,~\eqref{phiPlusEq} can be shown using either the refined Campbell formula or the mass-transport principle~\cite{gentner11,gp}. Following the latter approach, every cell $\Xi_j \in \Xi(k)$ transports mass $h(K_i)$ to every grain $K_i \in \Psi$ whose center $c(K_i)$ is contained in $\Xi_j$. Since the $\Xi_j$ form a partition, the total mass received by the grain $K_i$ is $h(K_i)$. On the other hand, the total mass sent out by the cell $\Xi_j$ equals
	$$\sum_{K_i\in\Psi}\one\{c(K_i) \in \Xi_j\}h(K_i).$$
%	the mass transported from a cell $\Xi_j \in \Xi(k)$ to a grain $K_i\in\Psi$
%	
%	$z\in\Z^d$ to $z'\in\Z^d$ is defined by the sum of $h(K_i)$ over all grains $K_i\in\Psi$ whose center $c(K_i)$ is contained in the intersection of the box $z'+[0,1]^d$ with the cell $\Xi_j$ centered in $z+[0,1]^d$. In particular, the total mass received by the origin is given by 
%	$$\sum_{K_i\in\Psi}\one\{c(K_i)\in[0,1]^d\}h(K_i),$$
%	whereas the total mass sent out from the origin equals 
%	$$\sum_{\Xi_j\in\Xi(k)}\one\{c(\Xi_j)\in[0,1]^d\}\sum_{K_i\in\Psi}\one\{c(K_i)\in\Xi_j\}h(K_i).$$
	Therefore, by the mass-transport principle~\cite[Corollary 3.9]{gp},
	\begin{align*}
		\gamma_h(\Psi)&=\E\sum_{\Xi_j\in \Xi(k)}\one\{c(\Xi_j)\in[0,1]^d\}\sum_{K_i\in\Psi}\one\{c(K_i)\in\Xi_j\}h(K_i).
    \end{align*}
    In particular, the definition of $\Phi_{\Xi(k),\ms{max}}^+$ yields that
    \begin{align*}
				      \gamma_h(\Psi)&\le\E\sum_{\Xi_j\in \Xi(k)}\one\{c(\Xi_j)\in[0,1]^d\}\sum_{K_i\in\Phi_{\Xi(k),\ms{max}}^+}\one\{c(K_i)\in\Xi_j\}h(K_i)\\
				      &=\gamma_h(\Phi_{\Xi(k),\ms{max}}^+),
	\end{align*}
	which proves~\eqref{phiPlusEq}.

	In the light of~\eqref{phiPlusEq}, it suffices to show that 
	$$\lim_{k\to\infty}\gamma_h(\Phi_{\Xi(k),\ms{max}}^+)-\gamma_h(\Phi^-_{\Xi(k),\ms{max}})=0.$$
In order to establish this bound, we first show that inside the cell $\Xi_i(k)$ the difference of $h$-values between $\Phi_{\Xi(k),\ms{max}}^+$ and $\Phi^-_{\Xi(k),\ms{max}}$ is of the order of the surface area of the cell $\Xi_i(k)$.

\begin{lemma}
\label{surfLem}
Almost surely, for every $i,k\ge1$, it holds that 
$$\sum_{\substack{K\in\Phi_{\Xi_i(k),\ms{max}}^+}}h(K)-\sum_{\substack{K\in\Phi_{\Xi_i(k),\ms{max}}^-}}h(K)\le\sum_{\substack{K\in\Phi:\,c(K)\in \Xi_i(k)\\ K\cap\partial\Xi_i(k)\ne\es}}h(K).$$
\end{lemma}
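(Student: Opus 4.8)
Fix $i,k\ge1$, abbreviate $C:=\Xi_i(k)$, which is a bounded set, recall $\Phi^+_C=\{K\in\Phi:\,c(K)\in C\}$, and write $\Phi^-_C:=\{K\in\Phi:\,K\subset C\}$. The plan is to show that the only grains capable of making the ``$+$''-optimizer heavier than the ``$-$''-optimizer are exactly those entering the right-hand side, namely grains with center in $C$ that meet $\partial C$, and that everything then reduces to a single exchange inequality built from the maximality defining $\Phi^-_{C,\ms{max}}$. As a preliminary I would note that, since $C$ is bounded and $\Phi$ almost surely has only finitely many grains with center in any bounded set, the configurations $\Phi^\pm_C$ are almost surely finite, all sums below are finite, and — the statement being about a fixed realization — it suffices to argue on this almost sure event, simultaneously for all $i,k$.

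The first step is the geometric observation that
\[
\Phi^+_C\setminus\Phi^-_C=\{K\in\Phi:\,c(K)\in C,\ K\not\subset C\}\subset\{K\in\Phi:\,c(K)\in C,\ K\cap\partial C\ne\es\}.
\]
For the equality I would use that grains are convex, so $c(K)\in K$, and hence $K\subset C$ forces $c(K)\in C$, giving $\Phi^-_C\subset\Phi^+_C$. For the inclusion, let $K$ satisfy $c(K)\in C$ and $K\not\subset C$; then $K$ contains the point $c(K)\in C$ together with some point $y\notin C$, and were $K$ disjoint from $\partial C$, the sets $K\cap\intt(C)$ and $K\setminus C$ would partition $K$ into two nonempty, disjoint, relatively open pieces, contradicting connectedness of $K$ (the degenerate subcases $c(K)\in\partial C$ and $\intt(C)=\es$ being immediate since then $c(K)\in K\cap\partial C$).

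The second step is the exchange argument. Set $\mathcal{D}:=\Phi^+_{C,\ms{max}}\cap\Phi^-_C$. Being a subconfiguration of the hard-core configuration $\Phi^+_{C,\ms{max}}$, it is itself hard core, and since $\mathcal{D}\subset\Phi^-_C$ it competes in the maximization defining $\Phi^-_{C,\ms{max}}$, whence $\sum_{K\in\mathcal{D}}h(K)\le\sum_{K\in\Phi^-_{C,\ms{max}}}h(K)$. Writing $\Phi^+_{C,\ms{max}}=\mathcal{D}\cup(\Phi^+_{C,\ms{max}}\setminus\Phi^-_C)$, noting $\Phi^+_{C,\ms{max}}\setminus\Phi^-_C\subset\Phi^+_C\setminus\Phi^-_C$, and invoking the inclusion of the first step together with $h\ge0$, I would obtain
\[
\sum_{K\in\Phi^+_{C,\ms{max}}}h(K)\le\sum_{K\in\Phi^-_{C,\ms{max}}}h(K)+\sum_{\substack{K\in\Phi:\,c(K)\in C\\K\cap\partial C\ne\es}}h(K),
\]
which, on recalling $C=\Xi_i(k)$ and rearranging, is the assertion.

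I do not anticipate a genuine obstacle here: the lemma is essentially bookkeeping around the definitions of $\Phi^\pm_{\Xi_i(k),\ms{max}}$. The only points demanding (mild) care are the finiteness of the sums, guaranteed by boundedness of the cells together with $\gamma_0<\infty$, and the elementary connectedness fact in the first step, where convexity of the grains is used both to place $c(K)$ inside $K$ and to make $K$ connected.
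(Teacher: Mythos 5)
Your proposal is correct and follows essentially the same route as the paper: the paper's proof is precisely your second step, namely that $\Phi^+_{\Xi_i(k),\ms{max}}\cap\Phi^-_{\Xi_i(k)}$ is an admissible competitor in the maximization defining $\Phi^-_{\Xi_i(k),\ms{max}}$, followed by bounding the remaining grains of $\Phi^+_{\Xi_i(k),\ms{max}}$ by those with center in the cell that meet its boundary. The only difference is that you spell out the connectedness argument behind that last inclusion, which the paper leaves implicit.
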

\begin{proof}
	The definition of $\Phi^-_{\Xi_i(k)}$ provided in Section~\ref{defSec} gives that
	$$\sum_{K\in\Phi_{\Xi_i(k),\ms{max}}^+\cap\Phi_{\Xi_i(k)}^-}h(K)\le\sum_{\substack{K\in\Phi^-_{\Xi_i(k),\ms{max}}}}h(K).$$
Hence,
\begin{align*}
	\sum_{\substack{K\in\Phi_{\Xi_i(k),\ms{max}}^+}}h(K)&\le\sum_{K\in\Phi_{\Xi_i(k),\ms{max}}^+\cap\Phi_{\Xi_i(k)}^-}h(K)+\sum_{\substack{K\in\Phi:\,c(K)\in \Xi_i(k)\\ K\cap\partial \Xi_i(k)\ne\es}}h(K)\\
														 &\le\sum_{\substack{K\in\Phi^-_{\Xi_i(k),\ms{max}}}} h(K)+\sum_{\substack{K\in\Phi:\,c(K)\in \Xi_i(k)\\ K\cap\partial \Xi_i(k)\ne\es}}h(K),
\end{align*}
as required.
\end{proof}
Using Lemma~\ref{surfLem}, we can now prove Theorem~\ref{approxThm}.
\begin{proof}[Proof of Theorem~\ref{approxThm}]
First, by	Lemma~\ref{surfLem},
\begin{align*}
	&\gamma_h(\Phi^+_{\Xi(k),\ms{max}})-\gamma_h(\Phi^-_{\Xi(k,\ms{max}})\\
	&\quad\le\E\sum_{i\ge1}\sum_{\substack{K\in\Phi\\ c(K)\in[0,1]^d\cap\Xi_i(k)}}\one\{K\cap\partial\Xi_i(k)\ne\es\}h(K)\\
	&\quad = \E\sum_{\substack{K\in\Phi\\ c(K)\in[0,1]^d}}\one\{K\cap \cup_{i\ge1}\partial\Xi_i(k)\ne\es\}h(K),
\end{align*}
	where $\Xi_i(k)$ denotes the $i$th cell of the tessellation $\Xi(k)$. Now, we choose $m\ge1$ large and distinguish the cases whether or not $K$ is contained in $[-m,m]^d$. Thus,
	%$K \cap \partial\Xi_i(k) \ne \es$, then the boundary $\partial\Xi_i(k)$ must intersect $[-m, m]^d$. In particular, $[-m,m]^d \cap \cup_{i\ge1} \partial \Xi_i(k)\ne\es$. Therefore, we arrive at
\begin{align*}
	&\gamma_h(\Phi^+_{\Xi(k),\ms{max}})-\gamma_h(\Phi^-_{\Xi(k),\ms{max}})\\
	&\quad\le\E\sum_{\substack{K\in\Phi\\ c(K)\in[0,1]^d}}\one\{K\not\subset[-m,m]^d\}h(K)\\
	&\quad\phantom{\le}+\E\Big[\one\{[-m,m]^d \cap \cup_{i\ge1} \partial \Xi_i(k)\ne\es\}\sum_{\substack{K\in\Phi\\ c(K)\in[0,1]^d}}h(K)\Big].
\end{align*}
	The dominated convergence theorem implies that the first expected value tends to 0 as $m\to\infty$. By uniform integrability, it remains to show that the indicator in the second expectation tends to $0$ in probability as $k\to\infty$. Moreover, by sub-additivity and the Campbell formula~\cite[Theorem 3.1.2]{sWeil} for the process of cells $\Xi(k)$,
	%mass-transport principle~\cite{gentner11,gp} (or, equivalently, the refined Campbell-Mecke theorem), 
\begin{align*}
	\P([-m,m]^d\cap\cup_{i\ge1}\partial\Xi_i(k)\ne\es)&=\E\lambda_d([0,1]^d\cap\cup_{i\ge1}\partial\Xi_i(k)\oplus [-m,m]^d)\\
	    &\le\E\sum_{i\ge1}\lambda_d([0,1]^d\cap\partial\Xi_i(k)\oplus[-m,m]^d)\\
	    &=\E\sum_{\substack{i\ge1\\c(\Xi_i(k))\in[0,1]^d}}\lambda_d(\partial\Xi_i(k)\oplus[-m,m]^d)\\
	    &=\frac{\E\lambda_d(\partial\Xi(k)^*\oplus[-m,m]^d)}{\E\lambda_d(\Xi(k)^*)}.
\end{align*}
Now, sending $k\to\infty$ and using the assumption in Theorem~\ref{approxThm} completes the proof.
\end{proof}

As a first example, we may apply Theorem~\ref{approxThm} to the family $\{\Xi_{\ms{Pois}}{(k)}\}_{k\ge1}$, where $\Xi_{\ms{Pois}}{(k)}$ denotes a Poisson-Voronoi tessellation based on a homogeneous Poisson point process that is independent of $\Phi$ and has intensity $k^{-1}$, where $k\ge1$. Clearly, the pair $(\Xi_{\ms{Pois}}{(k)},\Phi)$ is jointly stationary and we now provide a simple scaling argument to show that if $k$ is sufficiently small, then the typical isoperimetric-type coefficient becomes arbitrarily small.
%We conclude this section by providing two families of stationary tessellations with arbitrarily small typical isoperimetric coefficient. First, we consider  
\begin{lemma}
	\label{isopIndLem}
	Let $k\ge1$ and $\Xi_{\ms{Pois}}{(k)}$ be a Voronoi tessellation based on a homogeneous Poisson point process with intensity $k\ge1$. Then, for every $m\ge1$ the typical isoperimetric-type coefficients in~\eqref{isoEq} tend to 0 as $k \to \infty$.
\end{lemma}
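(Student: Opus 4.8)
The plan is to reduce the assertion, via the scaling invariance of the Poisson--Voronoi tessellation, to an elementary continuity-of-measure statement for the fixed unit-intensity typical cell. Here $\Xi_{\ms{Pois}}(k)$ is the Voronoi tessellation of a homogeneous Poisson process of intensity $k^{-1}$, as in the preceding paragraph. Let $\Xi_1:=\Xi_{\ms{Pois}}(1)$ and let $\Xi_1^*$ be its typical cell. Since scaling a unit-intensity Poisson process by the factor $k^{1/d}$ produces a Poisson process of intensity $k^{-1}$, and taking Voronoi cells commutes with scaling, $\Xi_{\ms{Pois}}(k)\overset{d}{=}k^{1/d}\Xi_1$; by the Campbell formula for the cell process (the one already used in the proof of Theorem~\ref{approxThm}) this passes to the typical cells, giving $\Xi_{\ms{Pois}}(k)^*\overset{d}{=}k^{1/d}\Xi_1^*$. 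First I would substitute this into~\eqref{isoEq} and simplify, using $\lambda_d(k^{1/d}A)=k\,\lambda_d(A)$, the identity $\partial(k^{1/d}\Xi_1^*)\oplus[-m,m]^d=k^{1/d}\bigl(\partial\Xi_1^*\oplus[-mk^{-1/d},mk^{-1/d}]^d\bigr)$, and $\E\lambda_d(\Xi_1^*)=1$; the factors of $k$ cancel and the isoperimetric-type coefficient in~\eqref{isoEq} for $\Xi_{\ms{Pois}}(k)$ becomes $\E\lambda_d\bigl(\partial\Xi_1^*\oplus[-\varepsilon_k,\varepsilon_k]^d\bigr)$ with $\varepsilon_k:=mk^{-1/d}\downarrow 0$. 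It then remains to prove that $\E\lambda_d(\partial\Xi_1^*\oplus[-\varepsilon,\varepsilon]^d)\to 0$ as $\varepsilon\downarrow 0$.

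For this I would argue almost surely first and then pass to expectations. On the full-measure event that $\Xi_1^*$ is a bounded convex polytope, $\partial\Xi_1^*$ is closed and lies in finitely many hyperplanes, so $\lambda_d(\partial\Xi_1^*)=0$; moreover $\partial\Xi_1^*\oplus[-\varepsilon,\varepsilon]^d$ decreases to $\partial\Xi_1^*$ as $\varepsilon\downarrow 0$, so continuity from above of $\lambda_d$ (legitimate since $\partial\Xi_1^*\oplus[-1,1]^d$ has finite volume) yields $\lambda_d(\partial\Xi_1^*\oplus[-\varepsilon,\varepsilon]^d)\downarrow 0$ almost surely. To move the limit inside the expectation I would apply dominated convergence with the dominating random variable $\lambda_d(\Xi_1^*\oplus[-1,1]^d)$, valid for $\varepsilon\le 1$ because $\partial\Xi_1^*\oplus[-\varepsilon,\varepsilon]^d\subseteq\Xi_1^*\oplus[-1,1]^d$, once one knows $\E\lambda_d(\Xi_1^*\oplus[-1,1]^d)<\infty$. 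The latter follows from the standard moment properties of the typical Poisson--Voronoi cell -- it is almost surely bounded with a circumradius $R(\Xi_1^*)$ having finite moments of all orders (see, e.g.,~\cite{sWeil}) -- via the crude bound $\lambda_d(\Xi_1^*\oplus[-1,1]^d)\le C_d\,(R(\Xi_1^*)+1)^d$.

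The main obstacle is the domination step, i.e.\ establishing $\E\lambda_d(\Xi_1^*\oplus[-1,1]^d)<\infty$ (equivalently, the $d$-th moment bound for the circumradius of the typical cell); everything else is routine bookkeeping with the scaling. A secondary point requiring a little care is to phrase the self-similarity at the level of the Palm distribution of the cell process rather than merely of the tessellation, but this is immediate from the Campbell formula already in use. Finally, I note that the same computation, combined with the elementary inequality $\lambda_d(\partial C\oplus[-\varepsilon,\varepsilon]^d)\le\lambda_d(C\oplus[-\varepsilon,\varepsilon]^d)-\lambda_d(C\ominus[-\varepsilon,\varepsilon]^d)$ for convex $C$ together with a Steiner-type expansion, would even yield the quantitative estimate that the isoperimetric-type coefficient for $\Xi_{\ms{Pois}}(k)$ is of order $m\,k^{-1/d}$.
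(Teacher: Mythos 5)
Your argument is correct and follows essentially the same route as the paper: rescale to the unit-intensity typical cell so that the coefficient becomes $\E\lambda_d(\partial\Xi_1^*\oplus[-mk^{-1/d},mk^{-1/d}]^d)/\E\lambda_d(\Xi_1^*)$, and then apply dominated convergence, with the integrable majorant supplied by the finite moments of the circumradius of the typical Poisson--Voronoi cell (the paper cites Hug--Schneider for the corresponding finiteness). Your write-up merely makes explicit the almost-sure convergence $\lambda_d(\partial\Xi_1^*\oplus[-\varepsilon,\varepsilon]^d)\downarrow\lambda_d(\partial\Xi_1^*)=0$ and the domination step that the paper leaves implicit.
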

\begin{proof}
	The Mecke-Slivnyak Theorem~\cite[Theorem 9.4]{lecturesPoisson} implies that the typical cell $\Xi_{\ms{Pois}}(k)^*$ is obtained as the zero-cell after adding an additional point at the origin. Additionally, by the scaling property of the Poisson point process,
	$$\frac{\E\lambda_d(\partial\Xi_{\ms{Pois}}(k)^*\oplus[-m,m]^d)}{\E\lambda_d(\Xi_{\ms{Pois}}(k)^*)}=\frac{\E\lambda_d(\partial\Xi_{\ms{Pois}}(1)^*\oplus[-m/k^{1/d},m/k^{1/d}]^d)}{\E\lambda_d(\Xi_{\ms{Pois}}(1)^*)}.$$
    Note that $\E\lambda_d(\partial\Xi_{\ms{Pois}}(k)^*\oplus[-m,m]^d)$ is finite by~\cite[Theorem 2]{hug}.
	In particular, the assertion follows from the dominated convergence theorem.
\end{proof}
The Voronoi tessellation used in Lemma~\ref{isopIndLem} makes use of additional randomness in the sense that the point process of cell centers is assumed to be independent of the particle process $\Phi$. However, by resorting to a factor construction provided by \'A.~Tim\'ar~\cite{timar}, it is possible to dispense with this additional randomness and produce a sequence of Voronoi tessellations as factors of $\Phi$. More precisely, assuming the group of isometries of the process of particle centers $\{c(K)\}_{K\in\Phi}$ to be almost surely trivial, in~\cite{timar} a sequence of Voronoi tessellations $\{\Xi_{\ms{fact}}(k)\}_{k\ge1}$ is constructed such that for every $m\ge1$ there exists $a=a(m)>0$ with the property that for every $k\ge1$ and every cell $\Xi_{\ms{fact},i}(k)$ of $\Xi_{\ms{fact}}(k)$,
\begin{enumerate}
	\item the pair $\big(\Xi_{\ms{fact}}(k),\Phi\big)$ is jointly stationary, 
	\item $\Xi_{\ms{fact}}(k)$ can be expressed as measurable function of $\Phi$, and
	\item $\frac{\lambda_d(\partial\Xi_{\ms{fact},i}(k)\oplus[-m,m]^d)}{\lambda_d(\Xi_{\ms{fact},i}(k))}\le a2^{-k}$.
\end{enumerate}

Using item 3. it is straightforward to verify  that the typical isoperimetric-type coefficients tend to 0.
\begin{lemma}
\label{isopFactLem}
For the family of tessellations $\{\Xi_{\ms{fact}}(k)\}_{k\ge1}$ the typical isoperimetric-type coefficients tend to 0 for every $m \ge 1$.
\end{lemma}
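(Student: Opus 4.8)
The plan is to derive Lemma~\ref{isopFactLem} directly from property~3 of the Tim\'ar construction by relating the typical isoperimetric-type coefficient in~\eqref{isoEq} to a weighted average of the cellwise ratios $\lambda_d(\partial\Xi_{\ms{fact},i}(k)\oplus[-m,m]^d)/\lambda_d(\Xi_{\ms{fact},i}(k))$, each of which is bounded by $a2^{-k}$. Concretely, I would first rewrite both the numerator $\E\lambda_d(\partial\Xi_{\ms{fact}}(k)^*\oplus[-m,m]^d)$ and the denominator $\E\lambda_d(\Xi_{\ms{fact}}(k)^*)$ as intensity-normalized sums over cells whose center lies in $[0,1]^d$, exactly as in the chain of displays at the end of the proof of Theorem~\ref{approxThm}. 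That is, using the Campbell formula~\cite[Theorem 3.1.2]{sWeil} for the cell process $\Xi_{\ms{fact}}(k)$, one has
$$\frac{\E\lambda_d(\partial\Xi_{\ms{fact}}(k)^*\oplus[-m,m]^d)}{\E\lambda_d(\Xi_{\ms{fact}}(k)^*)}=\frac{\E\sum_{i\ge1}\one\{c(\Xi_{\ms{fact},i}(k))\in[0,1]^d\}\lambda_d(\partial\Xi_{\ms{fact},i}(k)\oplus[-m,m]^d)}{\E\sum_{i\ge1}\one\{c(\Xi_{\ms{fact},i}(k))\in[0,1]^d\}\lambda_d(\Xi_{\ms{fact},i}(k))}.$$

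Second, I would apply property~3 inside the numerator on the event that $c(\Xi_{\ms{fact},i}(k))\in[0,1]^d$: almost surely $\lambda_d(\partial\Xi_{\ms{fact},i}(k)\oplus[-m,m]^d)\le a2^{-k}\lambda_d(\Xi_{\ms{fact},i}(k))$, so the numerator is bounded above by $a2^{-k}$ times the denominator. Since the denominator equals $\E\lambda_d(\Xi_{\ms{fact}}(k)^*)>0$ (a tessellation partitions space, so this is finite and nonzero, being the reciprocal of the cell intensity), dividing gives
$$\frac{\E\lambda_d(\partial\Xi_{\ms{fact}}(k)^*\oplus[-m,m]^d)}{\E\lambda_d(\Xi_{\ms{fact}}(k)^*)}\le a2^{-k},$$
which tends to $0$ as $k\to\infty$ for every fixed $m\ge1$. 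Combined with property~1 (joint stationarity of $(\Xi_{\ms{fact}}(k),\Phi)$), this verifies the hypotheses of Theorem~\ref{approxThm}, and in fact is the content of Lemma~\ref{isopFactLem}.

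The only minor subtlety — hardly an obstacle — is making sure the manipulation of numerator and denominator is legitimate: one must know a priori that $\E\lambda_d(\Xi_{\ms{fact}}(k)^*)$ is finite and strictly positive so that the ratio is well-defined and the inequality $\text{num}\le a2^{-k}\,\text{den}$ can be divided through. This is immediate because $\Xi_{\ms{fact}}(k)$ is a stationary tessellation with a positive, finite intensity of cells (the typical cell has expected volume equal to the reciprocal of that intensity), and property~3 itself presupposes $\lambda_d(\Xi_{\ms{fact},i}(k))\in(0,\infty)$ cellwise. One should also note that the pointwise bound in property~3 holds for every cell, in particular for those with center in $[0,1]^d$, so no further measurability or integrability argument beyond Campbell's formula is needed. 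Thus the proof is a one-line application of property~3 to the representation of the typical isoperimetric-type coefficient as a ratio of Campbell averages.
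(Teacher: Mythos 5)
Your proposal is correct and is essentially identical to the paper's proof: both rewrite the typical-cell ratio as a ratio of Campbell-type sums over cells with center in $[0,1]^d$ and then apply the cellwise bound of item~3 to obtain the bound $a2^{-k}$. The extra remarks on finiteness and positivity of $\E\lambda_d(\Xi_{\ms{fact}}(k)^*)$ are a harmless (and reasonable) elaboration of what the paper leaves implicit.
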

\begin{proof}
	Combining the definition of the typical cell with item 3. gives that
\begin{align*}
	\frac{\E\lambda_d(\partial\Xi_{\ms{fact}}(k)^*\oplus[-m,m]^d)}{\E\lambda_d(\Xi_{\ms{fact}}(k)^*)}&=\frac{\E\sum_{c(\Xi_{\ms{fact},i}(k))\in [0,1]^d}\lambda_d(\partial\Xi_{\ms{fact},i}(k)\oplus[-m,m]^d)}{\E\sum_{c(\Xi_{\ms{fact},i}(k))\in [0,1]^d}\lambda_d(\Xi_{\ms{fact},i}(k))}\\
	&\le a2^{-k},
\end{align*}
which tends to $0$ as $k\to\infty$.
\end{proof}

\section{Proof of Theorem~\ref{localOptThm}}
\label{eqSec}
The proof of Theorem~\ref{localOptThm} consists of two directions that are presented in Sections~\ref{meanLocSec} and~\ref{locMeanSec}, respectively. 
\subsection{Local maximality from intensity-maximality}
\label{meanLocSec}
In this subsection we fix a stationary hard-core thinning $\Psi$ of $\Phi$.
If local maximality fails, then  we can swap a finite number of grains from the thinning with a finite number of grains outside the thinning in such a way that the hard-core property is preserved and such that there is a net increase in $h$-values. We show that such a swap can be implemented in a translation-covariant manner. This allows us to construct from $\Psi$ a stationary hard-core thinning $\Psi'$ that exhibits a higher $h$-intensity. 

First, it is convenient to introduce short-hand notation for swaps. %Let $\Psi \subset \varphi$ be locally finite sets of grains with $\psi\in E_{\ms{hc}}$. 
A \emph{swap} is  a pair $\sigma = (\vec{K}, \vec{L})$ consisting of finite subsets $\vec{K}$ of $\Psi$ and $\vec{L}$ of $\Phi \setminus \Psi$. Moreover, the swap $\sigma$ is \emph{valid} if i) $\sum_{K\in\vec{K}}h(K)<\sum_{L\in\vec{L}}h(L)$ and ii) $\vec{L} \cup (\Psi \setminus \vec{K}) \in E_{\ms{hc}}$. Furthermore, if $\sigma = (\vec{K},\vec{L})$ and $\sigma'=(\vec{K'},\vec{L'})$ are valid swaps, then $\sigma$ and $\sigma'$ are \emph{compatible} if $L \cap L' = \es$ for all $L\in\vec{L}$ and $L'\in\vec{L'}$. Note that if $\sigma=(\vec{K},\vec{L})$ and $\sigma'=(\vec{K'},\vec{L'})$ are compatible valid swaps, then also the union $\sigma \cup \sigma' := (\vec{K} \cup \vec{K'}, \vec{L}\cup\vec{L'})$ is a valid swap. 

First, starting from a stationary thinning that is not locally $h$-maximal, we give a translation-covariant construction for a family of compatible valid swaps.  For $m\ge1$ and $K \in \Phi$, we select a valid swap $\sigma^{(m)}_K = (\vec{K'}, \vec{L'})$ such that $K',L'\subset c(K) + [-m/2, m/2]^d$ for all $K' \in \vec{K'}$ and $L' \in \vec{L'}$. If such a swap does not exist, then set $\sigma^{(m)}_K$ to be the empty swap, i.e., $\sigma^{(m)}_K = (\es, \es)$. If there are several possibilities, we fix one of them according to an arbitrary translation-covariant rule. 

Now, assume that $\Phi$ is endowed with iid marks $\{u_K\}_{K \in \Phi}$ from the unit interval $[0,1]$. Then, we introduce a specific Mat\'ern-type thinning $\mc{S}^{(m)}(\Phi, \Psi)$ of $\Phi$, where $K \in \Phi$ is contained in $\mc{S}^{(m)}(\Phi, \Psi)$ if and only if $\sigma^{(m)}_K\ne(\es,\es)$ and $u_K>u_{K'}$ for every $K' \in \Phi$ with $\sigma^{(m)}_{K'}\ne(\es,\es)$ and $|c(K)-c(K')|_\infty\le m$. Next, we show that $\mc{S}^{(m)}_{}(\Phi, \Psi)$ has positive intensity provided that the intensity of all $K\in\Phi$ such that $\sigma^{(m)}_K\ne(\es,\es)$ is positive.
%In the following, we assume that each grain $K\in\varphi$ carries a mark $u_K\in[0,1]$ and that the marks  are pairwise distinct. Now, 

\begin{lemma}
\label{descRandLem}
If $m\ge1$ is such that the intensity of all $K\in\Phi$ with $\sigma^{(m)}_K\ne(\es,\es)$ is positive, then also the intensity of $\mc{S}^{(m)}_{}(\Phi, \Psi)$ is positive.
\end{lemma}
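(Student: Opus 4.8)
The plan is to recognise $\mc{S}^{(m)}(\Phi,\Psi)$ as a Mat\'ern-II-type hard-core thinning of an auxiliary stationary particle process and then to compute its intensity by Palm calculus. Put $\Phi^*=\{K\in\Phi:\sigma^{(m)}_K\ne(\es,\es)\}$. Since $\sigma^{(m)}_K$ is obtained from $(\Phi,\Psi)$ by a translation-covariant rule that does not involve the marks $\{u_K\}$, the process $\Phi^*$ is a stationary particle process, jointly stationary with $(\Phi,\Psi)$, and its intensity $\gamma^*$ is positive by hypothesis and finite because $\Phi^*\subseteq\Phi$. Moreover, conditionally on $(\Phi,\Psi)$ the marks $\{u_K\}_{K\in\Phi^*}$ are still iid uniform on $[0,1]$, so $\Phi^*$ carries independent marks, and by construction $K\in\mc{S}^{(m)}(\Phi,\Psi)$ precisely when $u_K>u_{K'}$ for every $K'\in\Phi^*\setminus\{K\}$ with $|c(K)-c(K')|_\infty\le m$.

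Next I would apply the refined Campbell theorem to the stationary marked particle process $\Phi^*$ (see, e.g., \cite{sWeil}). Writing $\P^{0,!}$ and $\E^{0,!}$ for the reduced Palm distribution and expectation of $\Phi^*$ with respect to the grain centers, and using that the mark of the typical grain is uniform on $[0,1]$ and independent of the reduced Palm configuration, one obtains
$$\gamma(\mc{S}^{(m)}(\Phi,\Psi))=\gamma^*\int_0^1\E^{0,!}\Big[\prod_{\substack{K'\in\Phi^*\\ |c(K')|_\infty\le m}}\one\{u_{K'}<t\}\Big]\,\d t=\gamma^*\,\E^{0,!}\Big[\int_0^1 t^{N_m}\,\d t\Big]=\gamma^*\,\E^{0,!}\Big[\frac{1}{N_m+1}\Big],$$
where $N_m$ denotes the number of grains of $\Phi^*$ whose center lies in $[-m,m]^d$ under the reduced Palm distribution; the second equality uses the independence of the marks from the configuration together with Fubini.

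Finally, since $\Phi^*$ has almost surely locally finite realizations, so does its reduced Palm version, whence $N_m<\infty$ holds $\P^{0,!}$-almost surely. Thus $1/(N_m+1)$ is an almost surely positive, bounded random variable, so $\E^{0,!}[1/(N_m+1)]>0$; combined with $\gamma^*>0$ this yields $\gamma(\mc{S}^{(m)}(\Phi,\Psi))>0$, as claimed. The only point requiring care is the bookkeeping in the Palm computation — in particular, verifying that independent marking is preserved under passage to the reduced Palm distribution and that the typical grain's mark remains uniform and independent of the remaining configuration — but this is standard marked-point-process theory, so I expect no substantial obstacle.
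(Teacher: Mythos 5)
Your proof is correct, and it rests on the same core mechanism as the paper's: an eligible grain survives the Mat\'ern-type competition because it has only finitely many competitors in its window, each of which it beats with positive probability thanks to the iid uniform marks. The packaging, however, differs. The paper extracts positivity by a truncation: by local finiteness (and monotone convergence of intensities) there is an $n$ for which the intensity $\gamma_n$ of eligible grains with at most $n$ neighbours in the window is positive, and each such grain wins with conditional probability at least $1/n$, giving the crude bound $n^{-1}\gamma_n$ with no Palm calculus at all. You instead compute the exact intensity $\gamma^{*}\,\E^{0,!}\bigl[1/(N_m+1)\bigr]$ via the refined Campbell theorem and conclude from the almost sure finiteness of $N_m$ under the reduced Palm distribution. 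Both arguments are sound; yours yields a sharper, explicit formula at the cost of invoking the (standard but not entirely free) facts that independent marking passes to the reduced Palm distribution and that the typical grain's mark stays uniform and independent of the reduced configuration. If you want to sidestep exactly the bookkeeping you flag as delicate, condition on $(\Phi,\Psi)$ first: the conditional retention probability of a grain $K\in\Phi^{*}$ is exactly $1/(N_m(K)+1)$ with $N_m(K)=\#\{K'\in\Phi^{*}\setminus\{K\}:|c(K)-c(K')|_\infty\le m\}$, and the ordinary Campbell formula then gives your identity without ever taking Palm distributions of the marked process.
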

\begin{proof}
By local finiteness, there exists an integer $n\ge1$ such that the intensity $\gamma_n$ of all $K\in\Phi$ with $\sigma^{(m)}_K\ne(\es,\es)$ and $\#\{K'\in\Phi:\,|c(K')-c(K)|_\infty\le m\}\le n$ is strictly positive. In particular, the intensity of $\mc{S}^{(m)}(\Phi, \Psi)$ is at least $n^{-1}\gamma_n$, as required.
\end{proof}

Now we establish the first half of Theorem~\ref{localOptThm}. 
\begin{proof}[Proof of Theorem~\ref{localOptThm}, first half]
Assume that $\Psi$ is a stationary hard-core thinning of $\Phi$ such that with positive probability, $\Psi$ is not locally maximal. In particular, $\sigma^{(m)}_K\ne(\es,\es)$ for some $m\ge1$ and $K\in\Psi$. Hence, by Lemma~\ref{descRandLem}, we see that $\mc{S}^{(m)}_{}(\Phi, \Psi)\ne\es$. We show that implementing all swaps in $\mc{S}^{(m)}_{\ms{sw}}(\Phi, \Psi):=\{\sigma^{(m)}_K:\,K\in\mc{S}^{(m)}(\Phi, \Psi)\}$ transforms $\Psi$ into a stationary hard-core thinning $\Psi'$ with $\gamma_h(\Psi')>\gamma_h(\Psi)$. In particular, $\Psi$ is not intensity-maximal. %the assertion.
    
    To be more precise, let 
$$\Psi_{\ms{rem}}=\{K\in\Psi:\,K\in\vec{K}\text{ for some }(\vec{K},\vec{L})\in \mc{S}^{(m)}_{\ms{sw}}(\Phi, \Psi)\}$$
and 
$$\Psi_{\ms{add}}=\{L\in\Phi\setminus\Psi:\,L\in\vec{L}\text{ for some }(\vec{K},\vec{L})\in \mc{S}^{(m)}_{\ms{sw}}(\Phi, \Psi)\}$$
denote the stationary particle processes of grains that are removed, respectively added by some swap in $\mc{S}^{(m)}_{\ms{sw}}(\Phi, \Psi)$. Then, by compatibility, $\Psi'=(\Psi\setminus\Psi_{\ms{rem}})\cup\Psi_{\ms{add}}$ is a stationary hard-core thinning of $\Phi$. Moreover, since the latter union is disjoint, we deduce that 
\begin{align}
\label{psiPEq}
\gamma_h(\Psi')=\gamma_h(\Psi)-\gamma_h(\Psi_{\ms{rem}})+\gamma_h(\Psi_{\ms{add}}).
\end{align}
%Together with the following auxiliary result, this identity implies that $\gamma_h(\Psi')>\gamma_h(\Psi)$.
%
%\begin{lemma}
%\label{swapIncLem}
%If $m\ge1$ is such that $\P(\mc{S}^{(m)}_{{}}(\Psi)\ne\es)>0$, then $\gamma_h(\Psi_{\ms{rem}})<\gamma_h(\Psi_{\ms{add}})$.
%\end{lemma}
%\begin{proof}
%\end{proof}

 %But then Lemma~\ref{swapIncLem} and identity~\eqref{psiPEq} yield 
For each swap $\sigma\in\mc{S}^{(m)}_{\ms{sw}}(\Phi, \Psi)$ define the center
$$c(\sigma)=\frac{\sum_{K\in\Phi:\, \sigma^{(m)}_K=\sigma}c(K)}{\#\{K\in\Phi: \sigma^{(m)}_K=\sigma\}}.$$
Then, $\Psi_{\ms{add}}$ and $\Psi_{\ms{rem}}$ can be regarded as cluster point processes with primary point process $\{c(\sigma)\}_{\sigma\in\mc{S}^{(m)}_{\ms{sw}}(\Phi, \Psi)}$. Note that the clusters in both $\Psi_{\ms{add}}$ and $\Psi_{\ms{rem}}$ are pairwise disjoint.
In particular, the $h$-intensities of $\Psi_{\ms{add}}$ and $\Psi_{\ms{rem}}$ satisfy
$$\gamma_h(\Psi_{\ms{add}})=\E\sum_{(\vec{K},\vec{L}) \in \mc{S}^{(m)}_{\ms{sw}}(\Phi, \Psi)}\one\{c(\vec{K},\vec{L})\in[-1/2,1/2]^d\}\sum_{L\in\vec{L}}h(L),$$
and
$$\gamma_h(\Psi_{\ms{rem}})=\E\sum_{(\vec{K},\vec{L})\in\mc{S}^{(m)}_{\ms{sw}}(\Phi, \Psi)}\one\{c(\vec{K},\vec{L})\in[-1/2,1/2]^d\}\sum_{K\in\vec{K}}h(K),$$
respectively. Since $\sum_{L\in\vec{L}}h(L)>\sum_{K\in\vec{K}}h(K)$ holds for any valid swap $(\vec{K},\vec{L})$,  identity~\eqref{psiPEq} gives that $\gamma_h(\Psi')>\gamma_h(\Psi)$. 
\end{proof}

\subsection{Intensity-maximality from local maximality}
\label{locMeanSec}
In this section, we assume that $\Psi$ is a stationary hard-core thinning of $\Phi$ and that $\Psi$ is almost surely locally maximal, so that $\gamma_h(\Psi)\le\gamma_{h,\ms{max}}$. %Again, to simplify notation, we assume that $\Psi$ and $\Phi$ are defined on the same probability space.

Using the techniques established in Section~\ref{approxProofSec}, we prove that under condition~\eqref{momCond1}, the thinning $\Psi$ is intensity-maximal. More precisely, for $k\ge1$ let $\Xi{(k)}$ denote a Poisson-Voronoi tessellation with intensity $k^{-1}$ that is defined on a common probability space with $\Psi$ and is independent of $\Psi$. If we knew that $\gamma_h(\Phi^-_{\Xi(k),\ms{max}})\le\gamma_h(\Psi)$, then we could let $k$ tend to $\infty$ and apply Theorem~\ref{approxThm} to deduce that 
$$\lim_{k \to \infty} \gamma_h(\Phi^-_{\Xi(k),\ms{max}})=\gamma_h(\Psi)=\gamma_{h,\ms{max}}.$$
Therefore, the proof of Theorem~\ref{localOptThm} is complete once the following auxiliary result is established. 
\begin{lemma}
\label{surfLem2}
Let $k\ge1$ be arbitrary. Then, $\gamma_h(\Phi^-_{\Xi(k),\ms{max}}) \le \gamma_h(\Psi)$.
\end{lemma}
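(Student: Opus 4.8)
The plan is to compare $\Phi^-_{\Xi(k),\ms{max}}$ with $\Psi$ cell-by-cell, exploiting local maximality of $\Psi$ together with the moment condition~\eqref{momCond1}. Fix $k\ge1$ and, after a possible extension of the probability space, realize $\Xi(k)$ and $\Psi$ jointly with $\Xi(k)$ independent of $\Psi$, so that $(\Xi(k),\Phi,\Psi)$ is jointly stationary. As in the proof of Theorem~\ref{approxThm}, write $\Phi^-_{\Xi_i(k)}=\{K\in\Phi:\,K\subset\Xi_i(k)\}$ and let $\Phi^-_{\Xi_i(k),\ms{max}}$ be the chosen $h$-maximal hard-core subset. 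The key point is that, within a single cell $\Xi_i(k)$, the configuration $\Psi\cap\Phi^-_{\Xi_i(k)}$ of grains of $\Psi$ contained in that cell is itself a hard-core subset of $\Phi^-_{\Xi_i(k)}$, hence a candidate in the optimization defining $\Phi^-_{\Xi_i(k),\ms{max}}$. Therefore $\sum_{K\in\Phi^-_{\Xi_i(k),\ms{max}}}h(K)\ge\sum_{K\in\Psi,\,K\subset\Xi_i(k)}h(K)$ — but this is the wrong direction for comparing with $\gamma_h(\Psi)$, since $\Psi$ also contains grains meeting cell boundaries that are not counted on the right. So instead I would argue the other way: local maximality of $\Psi$ forbids $\Phi^-_{\Xi_i(k),\ms{max}}$ from being ``better'' than $\Psi$ in a way that would let us swap.

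Concretely, I would use the mass-transport principle exactly as in the proof of Theorem~\ref{approxThm} to write
\begin{align}
\label{surfLem2Eq1}
\gamma_h(\Phi^-_{\Xi(k),\ms{max}})&=\E\sum_{\Xi_j\in\Xi(k)}\one\{c(\Xi_j)\in[0,1]^d\}\sum_{K\in\Phi^-_{\Xi_j(k),\ms{max}}}h(K),\\
\label{surfLem2Eq2}
\gamma_h(\Psi)&=\E\sum_{\Xi_j\in\Xi(k)}\one\{c(\Xi_j)\in[0,1]^d\}\sum_{K\in\Psi:\,c(K)\in\Xi_j}h(K).
\end{align}
Thus it suffices to show, for the typical cell, that the expected aggregate $h$-value of $\Phi^-_{\Xi_j(k),\ms{max}}$ inside $\Xi_j$ does not exceed the expected aggregate $h$-value of the grains of $\Psi$ centered in $\Xi_j$. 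Fix a cell and consider the swap that removes from $\Psi$ all grains of $\Psi$ whose closure meets $\Xi_j$ and adds all grains of $\Phi^-_{\Xi_j(k),\ms{max}}$ that do not overlap the remaining part of $\Psi$. Since $\Phi^-_{\Xi_j(k),\ms{max}}$ lies entirely inside $\Xi_j$, grains of $\Psi$ disjoint from $\Xi_j$ are untouched and the resulting configuration is hard core; local maximality of $\Psi$ then forces the $h$-value removed to exceed the $h$-value added, i.e.
\begin{align}
\label{surfLem2Eq3}
\sum_{K\in\Phi^-_{\Xi_j(k),\ms{max}}}h(K)\le\sum_{\substack{K\in\Psi\\K\cap\Xi_j\ne\es}}h(K)+\sum_{\substack{K\in\Phi^-_{\Xi_j(k),\ms{max}}\\K\text{ overlaps }\Psi\setminus\text{(removed)}}}h(K).
\end{align}
The last correction term is supported on grains of $\Phi$ meeting the boundary region of $\Xi_j$; after summing over cells via~\eqref{surfLem2Eq1}--\eqref{surfLem2Eq2} and rearranging, the discrepancy between $\gamma_h(\Phi^-_{\Xi(k),\ms{max}})$ and $\gamma_h(\Psi)$ is bounded by the expected $h$-mass of grains of $\Phi$ touching the cell skeleton, which is governed by the same surface-type quantity appearing in Lemma~\ref{surfLem}; crucially, it is \emph{finite}, but — and here is the subtlety — this only gives $\gamma_h(\Phi^-_{\Xi(k),\ms{max}})\le\gamma_h(\Psi)$ directly if the boundary correction cancels, which it does because every grain of $\Psi$ centered in $\Xi_j$ but meeting $\partial\Xi_j$ is also counted, whereas $\Phi^-_{\Xi_j(k),\ms{max}}$ contains no boundary-crossing grains. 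Making this cancellation precise — pairing the grains of $\Psi$ straddling $\partial\Xi_j$ against the overlap correction so that no surface remainder survives — is the heart of the argument.

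The main obstacle is precisely the bookkeeping around cell boundaries: one must choose the swap so that every grain of $\Phi^-_{\Xi_j(k),\ms{max}}$ either is added to $\Psi$ (and thus its $h$-value is paid for by removing at least as much from $\Psi$, by validity) or is blocked by some grain of $\Psi$ that survives the swap, and then argue that each such blocking grain of $\Psi$ is itself counted in $\gamma_h(\Psi)$ with multiplicity accounted for. The moment condition~\eqref{momCond1} enters to guarantee that all the $h$-sums over boundary-meeting grains, and the Campbell-type expectations invoked, are finite, so that the rearrangements in~\eqref{surfLem2Eq1}--\eqref{surfLem2Eq3} are legitimate and no $\infty-\infty$ indeterminacy arises. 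Once finiteness is secured and the boundary pairing is set up, letting $k\to\infty$ in combination with Theorem~\ref{approxThm} closes the proof of Theorem~\ref{localOptThm}.
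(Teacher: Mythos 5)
Your reduction to a per-cell comparison has a genuine gap, and it is exactly the point you flag as ``the heart of the argument'': the boundary corrections do not cancel, and for fixed $k$ they do not even vanish. Summing your inequality~\eqref{surfLem2Eq3} over cells, each grain of $\Psi$ that straddles a cell boundary is counted once for \emph{every} cell it meets on the right-hand side, while it contributes only once to $\gamma_h(\Psi)$ via~\eqref{surfLem2Eq2}; in addition, the ``blocked grains'' correction is a positive additive error with the wrong sign. Both errors are proportional to the expected $h$-mass of grains of $\Phi$ hitting the cell skeleton per unit volume, which for a \emph{fixed} tessellation $\Xi(k)$ is a strictly positive constant. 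So this route can only yield $\gamma_h(\Phi^-_{\Xi(k),\ms{max}})\le\gamma_h(\Psi)+C(k)$ with $C(k)>0$, not the clean inequality claimed in the lemma. The moment condition~\eqref{momCond1} guarantees finiteness of these surface terms but cannot make them disappear.

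The missing idea is to apply local maximality not cell by cell but on a single large cube $[-m/2,m/2]^d$ whose surface-to-volume ratio tends to zero. Concretely, the paper defines
$$\Phi_{\Xi,m}=\{K\in \Phi^-_{\Xi,\ms{max}}:\, K\subset [-m/2,m/2]^d\}\cup\{K\in \Psi:\, K\cap [-m/2,m/2]^d=\es\},$$
which is a hard-core thinning of $\Phi$ differing from $\Psi$ in only finitely many grains, so local maximality bounds its aggregate $h$-value inside the cube by that of $\Psi$ plus the $h$-mass of grains of $\Phi$ meeting $\partial[-m/2,m/2]^d$; the discrepancy between $\Phi_{\Xi,m}$ and $\Phi^-_{\Xi,\ms{max}}$ inside the cube is controlled by the same boundary term. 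By~\eqref{momCond1} and a Campbell-formula computation, that boundary term is $O(m^{d-1})$, so dividing by $m^d$ and letting $m\to\infty$ gives the lemma for every fixed $k$. Your mass-transport identities~\eqref{surfLem2Eq1}--\eqref{surfLem2Eq2} are correct but are the tool for proving Theorem~\ref{approxThm} (where one \emph{does} let $k\to\infty$), not for this lemma, where $k$ is held fixed and the limit must be taken in a different, growing spatial parameter.
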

\begin{proof}
	To simplify notation, we write $\Xi$ instead of $\Xi(k)$.  For $m\ge1$ define
$$\Phi_{\Xi,m}=\{K\in \Phi^-_{\Xi,\ms{max}}:\, K\subset [-m/2,m/2]^d\}\cup\{K\in \Psi:\, K\cap [-m/2,m/2]^d=\es\}$$
as the union of all grains of $\Phi_{\Xi,\ms{max}}^{-}$ that are contained in $[-m/2,m/2]^d$ and all grains of $\Psi$ that do not hit $[-m/2,m/2]^d$. Then, $\Phi_{\Xi,m}$ is a hard-core thinning of $\Phi$ that does not contain grains intersecting the boundary $\partial [-m/2,m/2]^d$ of $[-m/2,m/2]^d$. In particular, by local maximality, 
\begin{align*}
\E\sum_{K\in \Phi_{\Xi,m}:\, c(K)\in[-m/2,m/2]^d}h(K)&\le\E\sum_{K\in\Psi:\,K\cap[-m/2,m/2]^d\ne\es}h(K)\\
&\le\E\sum_{K\in\Psi:\,c(K)\in[-m/2,m/2]^d}h(K)\\
&\phantom{\le}+\E\sum_{K\in\Phi:\,K\cap\partial[-m/2,m/2]^d\ne\es}h(K)\\
&=m^d\gamma_h(\Psi)+\E\hspace{-0.2cm}\sum_{K\in\Phi:\,K\cap\partial[-m/2,m/2]^d\ne\es}\hspace{-0.2cm}h(K).
\end{align*}
Moreover, 
\begin{align*}
&m^d\gamma_h(\Phi^-_{\Xi,\ms{max}})-\E\sum_{K\in\Phi_{\Xi,m}:\,c(K)\in[-m/2,m/2]^d}h(K)\\
&\quad=\E\sum_{K\in\Phi^-_{\Xi,\ms{max}}:\,c(K)\in[-m/2,m/2]^d}h(K)-\E\sum_{K\in\Phi_{\Xi,m}:\,c(K)\in[-m/2,m/2]^d}h(K)\\
&\quad\le \E\sum_{K\in\Phi:\,K\cap\partial[-m/2,m/2]^d\ne\es}h(K).
\end{align*}
But by the same arguments as in Section~\ref{approxProofSec},
$$\E\hspace{-0.2cm}\sum_{K\in\Phi:\,K\cap\partial[-m/2,m/2]^d\ne\es}\hspace{-0.2cm}h(K)\le \gamma_0 \E\int\lambda_d(\partial([-m/2,m/2]^d)\oplus (-K))h(K)\Q(\d K).$$
Since the right-hand side is of order $O(m^{d-1})$, sending $m$ to infinity concludes the proof.
\end{proof}

\section{Proof of Theorems~\ref{uniqGenThm} and~\ref{uniqBallThm}}
\label{uniqGenSec}
In this section, we prove distributional uniqueness of locally maximal thinnings in subcritical and barely supercritical regimes of continuum percolation. First, recall that we let $\Phi_{\ms{max}}$ denotes the thinning of $\Phi$ that is obtained by selecting the $h$-maximal hard-core subset in each of the connected components. Since we assume absolute continuity with respect to Lebesgue measure and absence of percolation, this selection is well-defined and unique.
\begin{proof}[Proof of Theorem~\ref{uniqGenThm}]
First, we show that $\Phi_{\ms{max}}$ is locally maximal. Indeed, suppose that $\Psi'$ is a hard-core thinning that is obtained by swapping a finite number of particles in $\Phi_{\ms{max}}$ with a finite number of particles in $\Phi\setminus\Phi_{\ms{max}}$. If $i\ge1$ is such that $\Phi_{\ms{max}}\cap\Phi_i\ne\Psi'\cap\Phi_i$, then the maximality property of $\Phi_{\ms{max}}$ implies that 
\begin{align}
\label{largerEq}
\sum_{K\in\Phi_{\ms{max}}\cap\Phi_i}h(K)>\sum_{L\in\Psi'\cap\Phi_i}h(L).
\end{align}
Hence, $\Phi_{\ms{max}}$ is locally maximal. On the other hand, suppose that $\Psi'$ is a locally maximal hard-core thinning. %, where we may assume $\Psi'$ and $\Phi$ to be defined on the same probability space. 
    If $\Psi'\ne\Phi_{\ms{max}}$, then $\Psi'\cap\Phi_i\ne\Phi_{\ms{max}}\cap\Phi_i$ for some $i\ge1$. In particular, when swapping $\Psi'\cap \Phi_i$ with $\Phi_{\ms{max}}\cap\Phi_i$, then inequality~\eqref{largerEq} yields a contradiction to the local maximality of $\Psi'$.
\end{proof}

In the proof of Theorem~\ref{uniqGenThm} we have used that in the subcritical regime of continuum percolation, the hard-core condition is essentially a local condition that does not extend beyond the considered connected component. A priori, this is not necessarily true in the supercritical regime. Nevertheless, slightly above the critical intensity, a refinement of this argument still works, since a positive proportion of the grains cannot be contained in any locally maximal thinning. These grains are called dispensable in the following.

\begin{definition}
\label{dispDef}
A grain $K\in\Phi$ is \emph{dispensable}  if there exists $K'\in\Phi$ such that 
\begin{enumerate}
\item $K\cap K'\ne\es$,
\item $h(K)<h(K')$,
\item if $K''\in\Phi$ is such that $K''\cap K=\es$, then 	$K''\cap K'=\es$.
\end{enumerate}
\end{definition}

Figure~\ref{dispFig} illustrates the definition of dispensable grains.
The crucial observation is that if $\Psi$ is a locally maximal hard-core thinning of $\Phi$, then $\Psi$ does not contain dispensable grains. Indeed, if we remove a dispensable grain $K$ from a hard-core subset and replace it by $K'$, then condition (iii) ensures that the resulting subset is still hard-core, whereas (i) and (ii) show that swapping $K$ and $K'$ increases the $h$-value. This is made precise in Proposition~\ref{uniqBall2Thm} below.

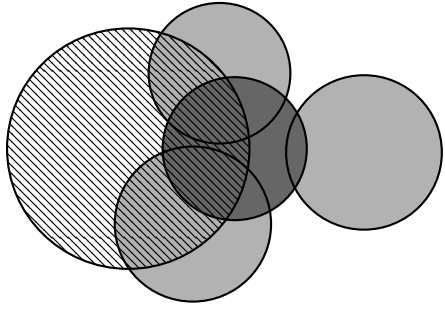
\begin{figure}[!htpb]
\centering
\begin{tikzpicture}[scale=1.0]
\fill[black!30!white] (2.1,-0.05)  circle (1.024);
\fill[black!30!white] (-0.15,-1) circle (1.028);
\fill[black!30!white] (0.2,1) circle (0.932);
\fill[black!60!white] (0.4,0)  circle (0.948);
\fill[pattern=north west lines] (-1,0) circle (1.594);

\draw[thick] (0.4,0)  circle (0.948);
\draw[thick] (2.1,-0.05)  circle (1.024);
\draw[thick] (-0.15,-1) circle (1.028);
\draw[thick] (0.2,1) circle (0.932);
\draw[thick] (-1,0) circle (1.594);

\end{tikzpicture}
\caption{Configuration including a dispensable ball (dark gray)}
\label{dispFig}
\end{figure}

Let $\Phi'$ denote the particle process obtained from $\Phi$ after removing all dispensable grains. Clearly, if $\Phi$ is a Poisson process, then $\Phi'$ has a strictly smaller intensity. The proof of Theorem~\ref{uniqBallThm} proceeds in two steps. First, we show that $\Phi'$ and $\Phi$ have the same locally maximal thinnings. This is a purely deterministic result. In a second step, we show that if we are only slightly above the critical threshold for continuum percolation, then the union of the particles in $\Phi'$ consists of finite connected components almost surely. The proof of the latter claim uses the adaptation of the essential enhancement technique to the continuum setting that has been developed in~\cite{strictIneq}. Finally, we can apply Theorem~\ref{uniqGenThm} to deduce distributional uniqueness of locally maximal thinnings. 

Carrying out this program, we first show that $\Phi$ and $\Phi'$ have the same locally maximal thinnings.
\begin{proposition}
\label{uniqBall2Thm}
Every locally maximal thinning of $\Phi$ is a locally maximal thinning of $\Phi'$, and, vice versa, every locally maximal thinning of $\Phi'$ is a locally maximal thinning of $\Phi$.
\end{proposition}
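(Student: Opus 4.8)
\textbf{Plan for the proof of Proposition~\ref{uniqBall2Thm}.} The statement is purely deterministic: fixing a configuration $\varphi$ of convex grains and letting $\varphi'$ be the subconfiguration obtained by deleting all dispensable grains, I claim that a locally finite $\psi\subset\varphi$ with $\psi\in E_{\ms{hc}}$ is locally $h$-maximal in $\varphi$ if and only if $\psi\subset\varphi'$ and $\psi$ is locally $h$-maximal in $\varphi'$. The plan is to prove this in three steps: (1) any locally maximal $\psi\subset\varphi$ avoids dispensable grains, hence $\psi\subset\varphi'$; (2) once we know $\psi\subset\varphi'\subset\varphi$, local maximality in the larger configuration $\varphi$ is at least as strong as local maximality in $\varphi'$, so the ``$\Rightarrow$'' direction is essentially immediate; (3) the converse, that local maximality in $\varphi'$ already implies local maximality in $\varphi$, is the substantive part and requires showing that a hypothetical improving swap in $\varphi$ that uses dispensable grains can be rewritten as an improving swap inside $\varphi'$.

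\textbf{Step (1).} Suppose $\psi\subset\varphi$ is locally $h$-maximal and $K\in\psi$ is dispensable, witnessed by $K'\in\varphi$ as in Definition~\ref{dispDef}. Consider the swap that removes $K$ from $\psi$ and adds $K'$, i.e.\ $\psi'=(\psi\setminus\{K\})\cup\{K'\}$. (If $K'\in\psi$ already, then condition (i) would violate $\psi\in E_{\ms{hc}}$ unless $K=K'$, which is excluded by $h(K)<h(K')$; so $K'\notin\psi$ and $\psi\Delta\psi'$ is finite and nonempty.) Condition (iii) guarantees $\psi'\in E_{\ms{hc}}$: any grain of $\psi\setminus\{K\}\subset\varphi$ that was disjoint from $K$ is disjoint from $K'$, and all grains of $\psi\setminus\{K\}$ were disjoint from $K$ because $\psi\in E_{\ms{hc}}$ (interiors disjoint, and for convex grains interior-disjointness propagates correctly here). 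Then $\sum_{L\in\psi'\setminus\psi}h(L)=h(K')>h(K)=\sum_{L\in\psi\setminus\psi'}h(L)$, contradicting local $h$-maximality of $\psi$. Hence $\psi$ contains no dispensable grain, i.e.\ $\psi\subset\varphi'$.

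\textbf{Step (2), the ``$\Rightarrow$'' direction.} Assume $\psi\subset\varphi$ is locally maximal; by Step (1), $\psi\subset\varphi'$. To check local maximality of $\psi$ in $\varphi'$, let $\psi'\in E_{\ms{hc}}$ with $\psi'\subset\varphi'$, $\psi\Delta\psi'$ finite, $\psi'\ne\psi$. Since $\varphi'\subset\varphi$, also $\psi'\subset\varphi$, so the defining inequality $\sum_{K\in\psi'\setminus\psi}h(K)<\sum_{K\in\psi\setminus\psi'}h(K)$ holds by local maximality of $\psi$ in $\varphi$. Thus $\psi$ is locally maximal in $\varphi'$.

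\textbf{Step (3), the ``$\Leftarrow$'' direction.} Now suppose $\psi\subset\varphi'$ is locally maximal in $\varphi'$; we must show it is locally maximal in $\varphi$. Suppose not: there is $\psi'\in E_{\ms{hc}}$ with $\psi'\subset\varphi$, $\psi\Delta\psi'$ finite and nonempty, and $\sum_{K\in\psi'\setminus\psi}h(K)\ge\sum_{K\in\psi\setminus\psi'}h(K)$. The set $D=\psi'\cap(\varphi\setminus\varphi')$ of dispensable grains newly added by $\psi'$ is finite. The idea is to repair $\psi'$ into a competitor contained in $\varphi'$ with $h$-value not smaller, contradicting Step (2)'s conclusion applied to $\psi$. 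For each dispensable $K\in D$ pick a witness $K'=w(K)\in\varphi$ with $K\cap K'\ne\es$, $h(K)<h(K')$, and property (iii). The main obstacle is that replacing each $K\in D$ by $w(K)$ need not preserve the hard-core property \emph{within the modified set} (two distinct $K_1,K_2\in D$ could have witnesses whose interiors overlap, or a witness could overlap a retained grain of $\psi'\setminus D$), and the witnesses themselves may again be dispensable. I would handle this by an iterative/greedy reduction: process the finitely many dispensable grains in $\psi'$ one at a time, at each stage replacing a dispensable grain $K$ currently in the configuration by its witness $w(K)$ \emph{or}, if $w(K)$ conflicts with a grain already present, by simply deleting $K$ and noting that property (iii) together with $h(K)<h(w(K))$ still lets us account for the lost $h$-value against a grain with strictly larger $h$-value, so the total $h$-value never decreases while the number of dispensable grains strictly decreases. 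After finitely many steps we obtain $\tilde\psi'\in E_{\ms{hc}}$ with $\tilde\psi'\subset\varphi'$, with $\psi\Delta\tilde\psi'$ finite, and with $\sum_{K\in\tilde\psi'\setminus\psi}h(K)\ge\sum_{K\in\psi'\setminus\psi}h(K)\ge\sum_{K\in\psi\setminus\psi'}h(K)\ge\sum_{K\in\psi\setminus\tilde\psi'}h(K)$ (the last using $\psi\setminus\tilde\psi'\subseteq\psi\setminus\psi'$ up to the bookkeeping of deleted grains). If $\tilde\psi'\ne\psi$ this contradicts local maximality of $\psi$ in $\varphi'$; and $\tilde\psi'=\psi$ is impossible because then $\psi'$ would have been obtained from $\psi$ purely by adjoining dispensable grains, which cannot increase the $h$-value relative to $\psi$ once one checks that $\psi$ together with such grains cannot be hard-core unless nothing was added. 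The careful combinatorial bookkeeping in this last step — making the greedy replacement rigorous and verifying the $h$-value never decreases — is where the real work lies; everything else is routine.
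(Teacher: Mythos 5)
Your Steps (1) and (2) coincide with the paper's argument for the first direction: a locally maximal thinning of $\Phi$ contains no dispensable grain (swap $K$ for its witness $K'$ using condition (iii) to preserve the hard-core property and $h(K)<h(K')$ to get a net gain), and once $\Psi\subset\Phi'$ the implication is immediate because every competitor inside $\Phi'$ is a competitor inside $\Phi$. The problem is Step (3), which you yourself flag as "where the real work lies" and then do not carry out. As written, your greedy repair is broken on both of its claimed invariants. In the "replace $K$ by $w(K)$" branch, the number of dispensable grains in the configuration need not decrease, because $w(K)$ may itself be dispensable; so your progress measure fails and the iteration has no reason to terminate with a competitor inside $\Phi'$. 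In the "delete $K$" branch, the total $h$-value genuinely drops by $h(K)>0$; the claim that condition (iii) lets you "account for the lost $h$-value against a grain with strictly larger $h$-value" does not produce any compensating increase in the sum $\sum_{K\in\tilde\psi'\setminus\psi}h(K)$, so the chain of inequalities you write at the end is not justified.

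The missing idea, which is exactly how the paper closes this gap, is to choose the witness $K'$ of the dispensable grain $K$ so that $h(K')$ is \emph{maximal} among all witnesses, and to observe that such a $K'$ cannot itself be dispensable: if $K''$ witnessed dispensability of $K'$, then the contrapositive of condition (iii) for the pair $(K,K')$ gives $K''\cap K\ne\es$ from $K''\cap K'\ne\es$, chaining condition (iii) for $(K,K')$ and $(K',K'')$ gives condition (iii) for $(K,K'')$, and $h(K'')>h(K')>h(K)$ — so $K''$ would be a witness for $K$ with larger $h$-value, a contradiction. Hence $K'\in\Phi'$, the swap strictly increases the net $h$-surplus of the competitor, and $\#(\Psi'\setminus\Phi')$ strictly decreases; taking a violating competitor minimizing $\#(\Psi'\setminus\Phi')$ (or inducting on this quantity) finishes the proof. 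Note also that the conflict you worry about — $w(K)$ overlapping a grain already present — is ruled out by condition (iii) itself, since every other grain of the current hard-core competitor is (almost surely) disjoint from $K$ and hence from $K'$; so the unsound "delete" branch is never needed.
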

\begin{proof}
First, let $\Psi$ be a locally maximal thinning of $\Phi$. Then, it suffices to show that $\Psi\subset\Phi'$. If $\Psi\not\subset\Phi'$, then let $K\in\Psi$ be some $h$-dispensable grain. Furthermore, let $K'\in\Phi$ be as in Definition~\ref{dispDef}. In particular, $K\cap K'\ne\es$ implies that $K'\not\in\Psi$. Hence, Definition~\ref{dispDef} shows that swapping $K$ and $K'$ yields another hard-core thinning, despite the fact that $h(K')>h(K)$. This contradicts  local maximality of $\Psi$.

Conversely, let $\Psi$ be a locally maximal thinning of $\Phi'$. Let $\Psi'$ be a hard-core thinning of $\Phi$ differing from $\Psi$ in only finite many grains and satisfying
$$\sum_{K\in\Psi'\setminus \Psi}h(K)>\sum_{K\in\Psi\setminus \Psi'}h(K).$$
Moreover, among all such choices of $\Psi'$, we fix one where $\#(\Psi'\setminus\Phi')$ is minimal. Note that $\Psi'\subset\Phi'$ would result in a contradiction to the local maximality of $\Psi$. Otherwise, choose any $h$-dispensable $K\in\Psi'$. Furthermore, let $K'\in\Phi$ be as in Definition~\ref{dispDef} with the additional requirement that $h(K')$ is as large as possible. In particular, $K'\in\Phi'$. Hence, if we let $\Psi''$ denote the the hard-core thinning of $\Phi$ obtained after swapping $K$ and $K'$, then $\#(\Psi''\setminus\Phi')=\#(\Psi'\setminus\Phi')-1$, contradicting the choice of $\Psi'$.
\end{proof}

Under the assumptions of Theorem~\ref{uniqBallThm}, dispensable grains occur with positive probability. Therefore, the intensity of grains that are relevant for forming locally maximal hard-core subsets is strictly lower than the intensity of the underlying Poisson particle process. Hence, the critical intensity for observing long-range interactions should be strictly higher than the critical intensity of continuum percolation. To provide a rigorous proof of this heuristic, we proceed as in~\cite{strictIneq}, where the essential-enhancement technology that has originally been developed for Bernoulli percolation~\cite{strictGrim,strictStac,strictMensh} has been adapted to various continuum percolation models.

Consider a Poisson particle process $\Phi$ with intensity $\gamma_0 > \gamma_{\ms{c}}$ and grain distribution $\Q$ as in the statement of Theorem~\ref{uniqBallThm}. After possible rescaling, we may assume that the support of the radius distribution is given by the interval $[1, m]$ for some $m>1$, i.e., 
\begin{align}
    \label{mEq}
    \Q(\{K \in \mc{K}:\, B_1(o) \subset K \subset B_m(o)\}) = 1
\end{align}
and $\Q(\{K \in \mc{K}:\, B_{1 + \varepsilon}(o) \subset K \subset B_{m - \varepsilon}(o)\}) < 1$ for every $\varepsilon > 0$. In the following, we assume that $m\ge1.1$ in order to reduce the notational complexity for some geometric auxiliary constructions. The proof of the general case is similar.

First, we declare each grain of the Poisson particle process to be \emph{red} independently with probability $1-p$ for some parameter $p\in(0,1)$. If a grain is not red, then it is \emph{active}. The reason for working with $1-p$ instead of $p$ is that similar as in~\cite[Theorem 2.3]{strictIneq}, we use essential diminishments in the sense that specific grains are suppressed. That is, we want the events related to the occurrence of active paths to be increasing in $p$. Next, it will be convenient to consider dispensable grains with the additional property that they do not intersect too many other grains of the Poisson Boolean model.
To be more precise, an active grain $K$ of the Poisson Boolean model is called \emph{special dispensable} if its radius is smaller than $1.05$ and there exists an active grain $K'$ of the Poisson Boolean model such that the following properties are satisfied.
\begin{enumerate}
\item $K \cap K' \ne \es$,
\item $K$ does not intersect any grains from the Poisson Boolean model of radius smaller than $1.05$,
\item if $K''\in\Phi$ is such that $K''\cap K=\es$, then 	$K''\cap K'=\es$.
\item $K$ intersects at most 3 grains from the Poisson Boolean model of radius larger than $1.05$.
\end{enumerate}
In particular, any special dispensable grain is dispensable in the sense of Definition~\ref{dispDef}. Now, any special dispensable grain is declared \emph{green} independently with probability $1-q$ for some parameter $q\in(0,1)$. 

Let $B_n(o)=\{x\in\R^d:\,|x| \le n\}$ denote the Euclidean ball of radius $n\ge1$ centered at the origin and let $\theta_n(p,q)$ denote the probability, that there exist uncolored grains $K,K'\in\Phi$ such that 
\begin{enumerate}
\item $c(K)\in B_{1}(o)$, $c(K')\in B_n(o) \setminus B_{n-1}(o)$,
\item $K,K'$ can be connected by a chain of overlapping uncolored grains in $\Phi$ with centers in $B_n(o)$.
\end{enumerate}
We put 
$$\theta(p,q)=\liminf_{n\to\infty}\theta_n(p,q)$$
noting that the sequence $(\theta_n(p,q))_{n\ge1}$ is not necessarily decreasing, since to decide which grains are special dispensable we only use grains with center in $B_n(o)$.
The parameters $p$ and $q$ are designed so as to render $\theta(p,q)$ increasing in both $p$ and $q$.
The relation between percolation of the diminished model and $\theta(p,q)$ is given by the following result, whose proof is parallel to~\cite[Proposition 3.1]{strictIneq}. 

\begin{proposition}
\label{thetPercProp}
If $\theta(p,q)=0$, then, almost surely, there is no infinite uncolored connected component.
\end{proposition}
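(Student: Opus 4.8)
The plan is to show that percolation of the uncolored grains in the diminished model is incompatible with $\theta(p,q) = 0$, by a standard block-argument / exponential-decay dichotomy adapted to the continuum, exactly as in~\cite[Proposition 3.1]{strictIneq}. First I would observe that, by stationarity of the Poisson particle process and the translation-invariance of the coloring rules, the existence of an infinite uncolored connected component is a $0$--$1$ event; so it suffices to rule out the case that this probability equals $1$. Assume for contradiction that uncolored percolation occurs almost surely. The goal is then to deduce that $\theta_n(p,q)$ does not tend to $0$, contradicting $\theta(p,q) = 0$.

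The key technical point is that the coloring is \emph{not} a local modification of the Poisson process: whether a grain is special dispensable depends on the configuration in a bounded neighborhood (one needs conditions (i)--(iv) of special dispensability, each of which only probes grains overlapping $K$ or $K'$, hence grains with center within distance $\le 2m + 2.1$ of $c(K)$), but crucially the definition of $\theta_n$ only uses grains with center in $B_n(o)$ to decide colors. So I would fix a large radius $R$ (e.g. $R = 3m$) such that the color status of any grain with center in $B_{n-R}(o)$ is measurable with respect to the restriction of $\Phi$ to $B_n(o)$; this makes $\theta_n$ a genuine lower bound for a finite-volume crossing probability. Then I would run the contrapositive: if $\theta(p,q) = 0$, then for every $\varepsilon > 0$ there are infinitely many $n$ with $\theta_n(p,q) < \varepsilon$, which by a Borel--Cantelli / subadditivity argument (gluing annuli $B_{2^{k+1}}(o) \setminus B_{2^k}(o)$) forces the uncolored cluster of the origin to be bounded almost surely, hence (by ergodicity/translation invariance) no infinite uncolored component exists at all. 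Concretely one shows $\P(o \text{ in an infinite uncolored cluster}) \le \limsup_n \theta_n'(p,q)$ for a suitable variant $\theta_n'$ comparable to $\theta_n$, and concludes.

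More precisely, the argument I would carry out is: (1) reduce to showing the origin-cluster is a.s.\ bounded, using the $0$--$1$ law; (2) note that if an uncolored cluster reaches from $B_1(o)$ to $\partial B_n(o)$ through grains centered in $B_n(o)$, then in particular the cluster of a grain near the origin has diameter $\ge n - O(m)$, so $\P(\text{uncolored cluster of origin region unbounded}) \le \liminf_n \theta_n(p,q) = \theta(p,q)$, where the inequality uses that the finite-window coloring in the definition of $\theta_n$ only \emph{adds} constraints relative to the infinite-volume coloring, hence can only \emph{increase} the chance of an uncolored crossing — so $\theta_n$ dominates the corresponding infinite-volume crossing probability for all $n$ large; (3) therefore $\theta(p,q) = 0$ implies the origin region's uncolored cluster is a.s.\ finite, and by a standard union-bound over translates (or directly by stationarity and the fact that a single infinite cluster would intersect $B_1(y)$ for a positive density of $y$), there is a.s.\ no infinite uncolored connected component.

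The main obstacle is the monotonicity/comparison in step~(2): one must argue carefully that using only grains with centers in $B_n(o)$ to determine which grains are green — rather than the full configuration — does not \emph{decrease} the connection probability in a way that breaks the bound. The point is that a grain declared special dispensable in the finite window is special dispensable in the infinite volume (conditions (i)--(iv) are monotone in the available grains only through condition (iii), which is an \emph{emptiness} condition and hence gets \emph{easier} to violate, not satisfy, when more grains are present) — so one has to track the direction of each inclusion in Definition of special dispensability. Once this comparison is pinned down, the rest is the routine continuum block argument of~\cite{strictIneq}; I would simply cite that the same reasoning applies verbatim. A secondary, purely bookkeeping, difficulty is handling the ``uncolored'' grains that intersect the boundary sphere $\partial B_n(o)$ and whose color cannot be determined from $B_n(o)$ alone, but this only costs an $O(n^{d-1})$ boundary correction that is negligible against the exponentially small crossing probabilities one obtains, exactly as in the reference.
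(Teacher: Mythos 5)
Your overall skeleton is the intended one (the paper itself only delegates to~\cite[Proposition 3.1]{strictIneq}): bound the probability that a grain centered in $B_1(o)$ lies in an infinite uncolored cluster by $\liminf_n\theta_n(p,q)$, then conclude by stationarity and a countable union over translates. The genuine gap is in how you justify the comparison between the finite-window coloring and the infinite-volume coloring in step~(2). You assert that ``a grain declared special dispensable in the finite window is special dispensable in the infinite volume'' and that the finite-window coloring ``only adds constraints.'' This is false, and your own parenthetical shows why: conditions (ii)--(iv) of special dispensability are \emph{absence} conditions (no small intersecting grains, no $K''$ meeting $K'$ but not $K$, at most $3$ large intersecting grains), so they become \emph{easier} to satisfy when grains centered outside $B_n(o)$ are ignored. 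Hence a grain near $\partial B_n(o)$ can be special dispensable -- and therefore green, i.e.\ colored -- in the window while being uncolored in infinite volume, so the finite-window crossing probability does \emph{not} dominate the infinite-volume one by any monotonicity. The correct resolution is not monotonicity but locality, which you state in your second paragraph and then fail to use: all grains relevant to conditions (i)--(iv) for a grain $K$ and its partner $K'$ have centers within distance roughly $3m+1.05<4m$ of $c(K)$, so for every grain with center in $B_{n-4m}(o)$ the two colorings coincide exactly, and only a boundary layer of width $O(m)$ is problematic.

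This boundary layer is then not mere bookkeeping of the kind you describe: your appeal to ``exponentially small crossing probabilities'' has no basis here, since the only input is $\liminf_n\theta_n(p,q)=0$ and no decay rate is available. What one must actually do is truncate the infinite uncolored chain at the last grain whose center (together with those of all its predecessors) lies in $B_{n-4m}(o)$; this produces a finite-window-uncolored chain from $B_1(o)$ to a grain centered at distance at least $n-6m$ from the origin, and one then has to reconcile this endpoint with the annulus $B_n(o)\setminus B_{n-1}(o)$ appearing in the definition of $\theta_n$ (e.g.\ by comparing with $\theta_{n'}$ for a suitably shifted index, or by a short deterministic continuation argument). As written, your step~(2) inequality $\P(\text{origin cluster unbounded})\le\liminf_n\theta_n(p,q)$ is asserted on the strength of a monotonicity that does not hold, so the proof is not complete.
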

 Our goal is to use the essential-diminishment method to prove the following result.

\begin{proposition}
\label{enhanProp}
There exists $\gamma_{\ms{u}} \in (\gamma_{\ms{c}}, \gamma_0)$ such that $\theta(\gamma_{\ms{u}}/\gamma_0, 0) = 0$. 
\end{proposition}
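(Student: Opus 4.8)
The strategy is the standard essential-enhancement differential-inequality scheme adapted to the continuum, following~\cite{strictIneq}. The idea is to view $\theta(p,q)$ as a function on the square $[0,1]^2$ and to establish a differential inequality of the form $\frac{\partial}{\partial p}\theta \le C\,\frac{\partial}{\partial q}\theta$ on a suitable region, which forces the critical curve $\{(p,q):\theta(p,q)=0\}$ to have bounded slope; combined with the fact that diminishing (decreasing $p$ or increasing the green probability $1-q$) cannot increase percolation and that at $q=0$ one removes a positive density of grains, this yields a point $(\gamma_{\ms u}/\gamma_0,0)$ with $\gamma_{\ms u}>\gamma_{\ms c}$ at which $\theta$ vanishes.

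\textbf{Key steps.} First I would record monotonicity and continuity: $\theta_n(p,q)$ is built from an increasing event in the family of uncolored grains, hence $\theta(p,q)$ is non-decreasing in $p$ and in $1-q$ is the wrong direction—rather, since green grains are \emph{removed} (diminishment), $\theta$ is non-decreasing in $q$ as well, and both partial derivatives exist for a.e.\ $(p,q)$ by monotonicity; a Margulis--Russo-type formula expresses $\partial_p\theta_n$ and $\partial_q\theta_n$ as intensity-weighted expected numbers of ``pivotal'' grains for the crossing event (pivotal red-candidate grains, respectively pivotal special-dispensable grains). Second—the geometric heart—I would show that every grain that is pivotal for the uncolored-crossing event is, in a quantified local sense, forced to look like a special dispensable grain, or can be ``repaired'' into one by a bounded local modification: concretely, a pivotal small grain ($r<1.05$) lies on every uncolored path, so removing it disconnects; one then argues that a nearby grain $K'$ of radius $>1.05$ swallows its ``free'' boundary, so that conditions (i)--(iv) of special dispensability hold with uniformly positive conditional probability given the configuration outside a bounded ball, using the absolute continuity and the lower density bound on the radius law and the assumption $m\ge 1.1$. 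This gives, via a finite-energy / local-modification estimate, $\partial_p\theta_n \le C\,\partial_q\theta_n$ uniformly in $n$ on a strip $p\in[\gamma_{\ms c}/\gamma_0 - \delta, 1]$, $q\in[0,\delta]$, and passing to the $\liminf$ in $n$ preserves the inequality for the relevant one-sided derivatives (here one must be slightly careful because $\theta_n$ need not be monotone in $n$, so I would work with $\theta$ directly or with a monotone regularization, exactly as in~\cite[Proposition 3.1]{strictIneq}). Third, integrating the differential inequality along the segment from $(1,0)$ down to $(1-\eta,\,C\eta)$ and using that at $q=C\eta>0$ a positive density of special dispensable grains is removed—so that the remaining active-uncolored intensity drops strictly below $\gamma_0$—one lands at a point where the model is dominated by a Poisson Boolean model of intensity $<\gamma_0$; choosing $\eta$ small, continuity of $\gamma_{\ms c}$ in the grain law and the strict positivity of the density of special dispensable grains (which holds for $\gamma_0>\gamma_{\ms c}$ under the hypotheses of Theorem~\ref{uniqBallThm}) give $\theta(1-\eta,0)=0$ already at $q=0$ after ``absorbing'' the $q$-diminishment back into $p$, i.e.\ $\theta(\gamma_{\ms u}/\gamma_0,0)=0$ with $\gamma_{\ms u}=(1-\eta)\gamma_0>\gamma_{\ms c}$. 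Combined with Proposition~\ref{thetPercProp}, this is exactly the assertion.

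\textbf{Main obstacle.} The delicate part is the geometric pivotality estimate in the second step: one must show that being pivotal for the uncolored crossing \emph{locally forces} a configuration in which the pivotal grain is special dispensable (or can be made so by a bounded-cost modification), with the quantitative bounds uniform in $n$ and robust to the randomness of the radii. This is where the restriction to radii in $[1,m]$ with $m\ge1.1$, the threshold $1.05$, the bound on the number of large neighbours in condition (iv), and the lower bound on the radius density all enter; transferring the fixed-radius constructions of~\cite{strictIneq} to random radii requires re-doing these elementary but fiddly packing arguments, and a substantial portion of this is the content deferred to the appendix. The remaining ingredients—the Margulis--Russo formula, the monotonicity in $p$ and $q$, and the integration of the differential inequality—are routine once the pivotality lemma is in place.
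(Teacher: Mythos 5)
Your overall scheme --- a Margulis--Russo formula, a differential inequality between $\partial_p\theta_n$ and $\partial_q\theta_n$ obtained from a local pivotality/repair argument, and a final integration --- is the same as the paper's, and you correctly identify the geometric pivotality estimate as the hard part; the paper isolates that part as Lemma~\ref{diffIneqProp} and proves it in the appendix, while the proof of Proposition~\ref{enhanProp} itself consists precisely of the integration step. That is the step your proposal gets wrong. You integrate from $(1,0)$ to $(1-\eta,C\eta)$ and then claim $\theta$ vanishes at the endpoint because ``the remaining active-uncolored intensity drops strictly below $\gamma_0$'' together with ``continuity of $\gamma_{\ms{c}}$ in the grain law''. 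This fails: an intensity strictly below $\gamma_0$ is not an intensity below $\gamma_{\ms{c}}$; the uncolored process is a \emph{dependent} thinning of the Poisson process, so no intensity comparison or stochastic domination by a subcritical Boolean model is available (this is exactly the obstruction that the essential-enhancement/diminishment method exists to circumvent); and with $\eta$ small you would be asserting $\gamma_{\ms{u}}=(1-\eta)\gamma_0$ arbitrarily close to $\gamma_0$, which is far more than the differential inequality can deliver, since $\delta(p,q)$ is merely positive and the trade-off between $p$ and $q$ only buys an increment of order $\delta$ above $p_{\ms{c}}=\gamma_{\ms{c}}/\gamma_0$ before the $q$-budget $[0,1]$ is exhausted.

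The correct integration anchors on the subcritical side, not at $p=1$. At $p=p_{\ms{c}}-\varepsilon$ the \emph{active} grains alone form a Poisson Boolean model of intensity $(p_{\ms{c}}-\varepsilon)\gamma_0<\gamma_{\ms{c}}$, so $\liminf_n\theta_n(p_{\ms{c}}-\varepsilon,q)=0$ for every $q$. Integrating Lemma~\ref{diffIneqProp} along a path on which $p$ increases by $2\varepsilon$ while $q$ decreases from $1/2$ to $1/4$ gives $\theta_n(p_{\ms{c}}+\varepsilon,1/4)\le\theta_n(p_{\ms{c}}-\varepsilon,1/2)$ for all $n$, provided $\varepsilon$ is small relative to the infimum of $\delta$ on the relevant compact parameter set; monotonicity in $q$ then yields $\theta_n(p_{\ms{c}}+\varepsilon,0)\le\theta_n(p_{\ms{c}}+\varepsilon,1/4)$, whence $\theta(p_{\ms{c}}+\varepsilon,0)=0$ and $\gamma_{\ms{u}}=(p_{\ms{c}}+\varepsilon)\gamma_0\in(\gamma_{\ms{c}},\gamma_0)$. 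This is the ``small computation involving derivatives'' for which the paper cites Grimmett. Your first two steps are otherwise consistent with the paper's Lemma~\ref{mrLem} and Lemma~\ref{diffIneqProp}, so the gap is confined to, but squarely located in, the part of the argument that constitutes the proof of the proposition under review.
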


Before we come to the proof of Proposition~\ref{enhanProp}, let us discuss how it implies Theorem~\ref{uniqBallThm}.
\begin{proof}[Proof of Theorem~\ref{uniqBallThm}]
By Proposition~\ref{uniqBall2Thm}, it suffices to show that there exists some $\gamma_0 > \gamma_{\ms{c}}$ such that, almost surely, $\Phi$ does not contain an infinite connected component after removing the dispensable grains. But this is a consequence of Propositions~\ref{thetPercProp} and~\ref{enhanProp}, which show that almost surely $\Phi$ does not contain infinite connected components already after removing all special dispensable grains.
\end{proof}

The key step in the proof of the essential-diminishment approach is the derivation of the following differential inequality, see~\cite[Lemmas 3.1 and 3.2]{strictIneq}.

\begin{lemma}
\label{diffIneqProp}
There exists a continuous function $\delta: (0,1)^2 \to (0,1)$ such that for all $p,q\in(0,1)$ and $n\ge1$,
\begin{align}
\label{keyEnhLem}
\frac{\partial\theta_n(p,q)}{\partial q} \ge \delta(p,q)\frac{\partial\theta_n(p,q)}{\partial p}.
\end{align}
\end{lemma}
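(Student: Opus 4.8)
The plan is to follow the standard Russo-type differential-formula approach used for essential enhancements in percolation, as developed in~\cite{strictGrim} and adapted to the continuum in~\cite{strictIneq}. The event underlying $\theta_n(p,q)$ is that there is an uncolored crossing between $B_1(o)$ and $B_n(o)\setminus B_{n-1}(o)$ through grains with centers in $B_n(o)$. This event depends on finitely many random ingredients: the Poisson configuration restricted to a bounded window, and for each grain two independent coloring variables (the ``red'' coin with parameter $1-p$ deciding activity, and the ``green'' coin with parameter $1-q$ deciding suppression of special dispensable grains). Conditioning on the Poisson configuration and on the activity statuses, one obtains a monotone (decreasing in green-suppression) event in finitely many independent Bernoulli$(1-q)$ variables, so Russo's formula applies and gives $\partial\theta_n/\partial q$ as a sum over special dispensable grains of the probability that the grain is \emph{pivotal} in the diminished sense: suppressing it destroys an otherwise-present uncolored crossing. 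Similarly, $\partial\theta_n/\partial p$ is a sum over all grains $K$ of the probability that flipping $K$ from red to active creates a crossing; equivalently, that $K$ is pivotal for the activity coin.

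The heart of the matter is then a local geometric comparison: whenever a grain $K$ is pivotal for its activity coin (so activating $K$ completes an uncolored path), one can, at a cost bounded below by a uniform constant in terms of the local Poisson configuration, exhibit a \emph{special dispensable} grain $K''$ near $K$ that is pivotal for its green coin. Concretely I would argue that on the event that $K$ is activity-pivotal, with probability bounded below (over the extra randomness of the Poisson process in a bounded neighbourhood of $c(K)$ and the colourings there) there is a small grain $K''$ of radius in, say, $(1,1.05)$ whose center sits in a controlled position relative to $K$ and a companion grain $K'$, such that $K''$ satisfies conditions (i)--(iv) in the definition of special dispensable, such that $K''$ lies on the crossing path through $K$, and such that suppressing $K''$ (together with a bounded number of local modifications absorbed into the lower bound) breaks the crossing. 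This produces the pointwise inequality between the Russo derivatives, up to a factor that is continuous and strictly positive in $(p,q)$; writing $\delta(p,q)$ for this factor and integrating over the pivotality events yields~\eqref{keyEnhLem}. The factor is continuous and $(0,1)$-valued because it is built from the success probability of the local geometric insertion, a fixed finite-dimensional integral against Poisson and Bernoulli laws depending smoothly on $p$ and $q$, bounded away from $0$ and $1$ on compacts by the boundedness of the radius density away from $0$ (hypothesis of Theorem~\ref{uniqBallThm}) and by the fact that special dispensability only constrains finitely many neighbouring grains.

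I would organize the proof in three steps. First, Russo's formula: fix $n$, restrict attention to the finite window $B_n(o)$ (only grains with center there matter by the definition of $\theta_n$, and only such grains can be special dispensable), condition on the Poisson points, and differentiate in $q$ and in $p$ separately, obtaining $\partial\theta_n/\partial q=\sum_{K}\P(K\text{ is }q\text{-pivotal})$ over special dispensable grains and $\partial\theta_n/\partial p=\sum_{K}\P(K\text{ is }p\text{-pivotal})$ over all grains, with the usual caveat that these are really conditional expectations over the Poisson configuration integrated out. Here one must check the monotonicity/increasingness claims already asserted in the paper (that $\theta_n$ is increasing in $p$ and in $q$) so that the pivotal probabilities carry the correct sign. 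Second, the geometric insertion lemma described above, proved by an explicit local construction mimicking~\cite[Section 3]{strictIneq} but with the radius ranging over an interval rather than being fixed; this is where the bulk of the elementary geometry lives and is the step I expect to be the main obstacle, since one must simultaneously guarantee that the inserted small ball is special dispensable (in particular that it meets at most three large grains and no other small grain — condition (iv) and (ii)), that it is genuinely pivotal, and that the construction succeeds with probability bounded below uniformly in $n$. Third, assembling: combine the two Russo expansions via the pointwise domination of $p$-pivotality probabilities by (a constant times) $q$-pivotality probabilities to get~\eqref{keyEnhLem}, and verify that the constant can be taken to be a continuous function $\delta:(0,1)^2\to(0,1)$. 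The main obstacle is genuinely Step two: the care needed to transfer the fixed-radius geometric gadgets of~\cite{strictIneq} to random radii, keeping all the combinatorial constraints (``at most 3 large grains'', radius $<1.05$, isolation from other small grains) satisfiable with uniformly positive probability; the rest is routine once this local estimate is in hand, and much of it is deferred to the appendix as the paper indicates.
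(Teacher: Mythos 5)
Your overall strategy coincides with the paper's: a Margulis--Russo formula converts both partial derivatives into (integrals of) pivotal probabilities $P_{n,1}$ and $P_{n,2}$ (the paper's Lemma~\ref{mrLem}, stated via the Poisson version of Russo's formula rather than by conditioning on the configuration, but this is only a presentational difference), and the lemma then reduces to a pointwise comparison of the two pivotality probabilities via a local geometric surgery whose success probability is a continuous, strictly positive function of $(p,q)$. Up to that point the proposal is sound.

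The genuine gap is that the entire content of the lemma lives in the step you defer. Your sketch --- ``insert a small special dispensable grain $K''$ on the crossing path so that suppressing it breaks the crossing'' --- glosses over the one point that is actually hard: for $K''$ to be $2$-pivotal there must be \emph{no bypass}, i.e.\ the portion of the cluster connected to $B_1(o)$ and the portion connected to $B_n(o)\setminus B_{n-1}(o)$ must be kept mutually non-overlapping except through $K''$. The paper achieves this by revealing the process radially towards the candidate location, maintaining two disjoint families $V$ and $T$ of grains, and proving four quantitative geometric lemmas (Lemmas~\ref{elGeomLem-1}--\ref{elGeomLem2}) that allow both families to be extended inward by perturbable balls while preserving disjointness, with a three-case analysis in the position of $x$; it also needs Lemma~\ref{noRedLem} (forcing no diminishments in a fixed annulus, at a cost $\gamma_1(p,q)$) so that the local resampling can be performed conditionally on $E_{n,1}$ without interference from the coloring outside. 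None of this appears in your argument, and it is precisely where the transfer from fixed to random radii requires new work (stability of intersection patterns under perturbations of both centers and radii, which is where the density-bounded-below hypothesis enters). In addition, you do not address how to make your inserted grain satisfy conditions (ii) and (iv) of special dispensability (no intersection with other small grains, at most three large neighbours) simultaneously with pivotality; this again requires controlling the two arms near the insertion point. Finally, since your comparison is between pivotality at $K$ and pivotality of a \emph{different} grain $K''$, assembling the two Russo expansions needs an explicit change-of-location/bounded-multiplicity argument that you do not supply; the paper avoids this by comparing $P_{n,2}$ and $P_{n,1}$ at the \emph{same} grain $x+K$. As it stands, the proposal is a correct plan but not a proof.
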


We briefly recall the well-known argument that is used to deduce Proposition~\ref{enhanProp} from Lemma~\ref{diffIneqProp}.
\begin{proof}[Proof of Proposition~\ref{enhanProp}]
Assume that $\gamma_0 > \gamma_{\ms{c}}$ and put $p_{\ms{c}} = \gamma_{\ms{c}} / \gamma_0$. Then, we need to show that 
$$\liminf_{n\to\infty}\theta_n(p_{\ms{c}} + \varepsilon,0) = 0,$$
	for some $\varepsilon > 0$. By Lemma~\ref{diffIneqProp}, there exists $\varepsilon>0$ such that 
	$$\theta_n(p_{\ms{c}} + \varepsilon,1/4) \le \theta_n(p_{\ms{c}} - \varepsilon,1/2)$$
	for all $n\ge1$. This can be seen after a small computation involving derivatives; for a similar problem a detailed derivation is given in the proof of~\cite[Theorem 3.7]{Grim99}. Since an independently thinned Poisson particle process is again a Poisson particle process, the definition of $p_{\ms{c}}$ gives that
$$\liminf_{n\to\infty} \theta_n(p_{\ms{c}}+\varepsilon, 0) \le \liminf_{n\to\infty} \theta_n(p_{\ms{c}} + \varepsilon, 1/4) \le \liminf_{n\to\infty} \theta_n(p_{\ms{c}}-\varepsilon,1/2)=0,$$
as required.
\end{proof}

The proof of Lemma~\ref{diffIneqProp} is delicate, but most of the arguments used in the proof of~\cite[Lemma 3.2]{strictIneq} carry over to the present setting with some extra work. In fact, the only substantial difference is that in our setting, the radii of balls are random, whereas that radius is held constant in~\cite{strictIneq}. Hence, to avoid unduly repetition, we omit the details of the proof of Lemma~\ref{diffIneqProp} and only reproduce the most important steps in the appendix.

\section{Uniqueness through random fluctuations in grain sizes}
\label{fluctSec}

The proof of Theorem~\ref{highDensProp} proceeds in several steps. As in the theory of Gibbs measures, the question of distributional uniqueness is related to the stabilization concept via the notion of disagreement percolation~\cite{disagHc}. That is, loosely speaking, the symmetric difference of two distinct locally maximal thinnings percolates.
This is made precise in the following result, where for a locally finite set of grains $\varphi$, we let $\mc{G}(\varphi)$ denote the contact graph on $\varphi$. That is, $\mc{G}(\varphi)$ has vertex set $\varphi$ and $K, L \in \varphi$ are connected by an edge if $K\cap L \ne \es$. The specific form of $h_a$ is not of importance for the following result.
\begin{lemma}
\label{disagreePerc}
Let $\Psi,\Psi'$ be locally $h_a$-maximal thinnings of $\Phi$.  Then, all connected components of the graph $\mc{G}(\Psi\Delta \Psi')$ are infinite.
\end{lemma}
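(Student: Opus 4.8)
The plan is to prove the contrapositive: if $\mc{G}(\Psi\Delta\Psi')$ has a finite connected component $C$, then one of $\Psi,\Psi'$ fails to be locally $h_a$-maximal. The key point is that $C$ is a finite set of grains which is \emph{isolated} from the rest of $\Psi\Delta\Psi'$ in the contact graph $\mc{G}(\Phi)$: if $K\in C$ and $L\in\Psi\Delta\Psi'$ with $K\cap L\ne\es$, then $L\in C$. Decompose $C=C_\Psi\cup C_{\Psi'}$, where $C_\Psi=C\cap(\Psi\setminus\Psi')$ and $C_{\Psi'}=C\cap(\Psi'\setminus\Psi)$, and consider the two candidate swaps $\sigma=(C_\Psi,C_{\Psi'})$ applied to $\Psi$ and $\bar\sigma=(C_{\Psi'},C_\Psi)$ applied to $\Psi'$. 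Both are finite symmetric differences, both are genuine (since $\Psi\ne\Psi'$ forces $C\ne\es$ for at least one component, so WLOG $C$ is a component with $C_\Psi\cup C_{\Psi'}\ne\es$), and both preserve the hard-core property: the swapped-in grains $C_{\Psi'}$ are contained in $\Psi'$, hence pairwise non-overlapping; they do not overlap any grain in $\Psi\setminus C_\Psi$ because such a grain is either in $\Psi\cap\Psi'\subset\Psi'$ (and $\Psi'\in E_{\ms{hc}}$) or in $(\Psi\setminus\Psi')\setminus C_\Psi$, which by isolation of $C$ in $\mc{G}(\Phi)$ cannot be adjacent to any grain of $C_{\Psi'}\subset C$. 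An identical argument handles $\bar\sigma$ applied to $\Psi'$.

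With both swaps shown to be admissible in the sense of the definition of local $h_a$-maximality, I would apply that definition twice. Local $h_a$-maximality of $\Psi$ applied to the thinning $\Psi'' := (\Psi\setminus C_\Psi)\cup C_{\Psi'}$ (which differs from $\Psi$ exactly in the finite set $C$ and is $\ne\Psi$) gives
$$\sum_{K\in C_{\Psi'}}h_a(K)<\sum_{K\in C_\Psi}h_a(K).$$
Symmetrically, local $h_a$-maximality of $\Psi'$ applied to $(\Psi'\setminus C_{\Psi'})\cup C_\Psi$ gives
$$\sum_{K\in C_\Psi}h_a(K)<\sum_{K\in C_{\Psi'}}h_a(K).$$
These two strict inequalities are contradictory, so no finite connected component of $\mc{G}(\Psi\Delta\Psi')$ can exist, which is the claim. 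One small bookkeeping point: if $\Psi=\Psi'$ the statement is vacuous (no components at all), so we may assume $\Psi\Delta\Psi'\ne\es$, and then every connected component $C$ of $\mc{G}(\Psi\Delta\Psi')$ is nonempty by definition, so at least one of $C_\Psi,C_{\Psi'}$ is nonempty and the swaps are non-trivial as required.

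The main obstacle — really the only subtle point — is verifying that the swapped configurations lie in $E_{\ms{hc}}$, i.e., that replacing $C_\Psi$ by $C_{\Psi'}$ inside $\Psi$ introduces no new overlaps. This is where finiteness and graph-theoretic isolation of the component $C$ are essential: a grain $L\in C_{\Psi'}$ can only overlap grains of $\Phi$ that are its neighbours in $\mc{G}(\Phi)$, and among grains of $\Psi\Delta\Psi'$ those neighbours all lie in the same component $C$, hence are removed or already present; grains in $\Psi\cap\Psi'$ are unproblematic since $\Psi'$ itself is hard-core. I would spell this case distinction out carefully, since it is the crux of why disagreement must percolate. Everything else is a direct application of the definition of local maximality and requires no computation; in particular the exponential form of $h_a$ plays no role here, consistent with the remark preceding the lemma.
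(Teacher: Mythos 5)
Your proof is correct and follows essentially the same route as the paper: isolate a finite component $C$ of $\mc{G}(\Psi\Delta\Psi')$, swap $C\cap\Psi$ with $C\cap\Psi'$, and contradict local $h_a$-maximality. If anything you are more careful than the paper's own argument, since you explicitly verify that the swapped configuration is hard-core and your symmetric double application of local maximality avoids the paper's WLOG on which sum is larger (and thereby covers the equality case cleanly).
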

\begin{proof}
    Assume that $C$ was a finite connected component $\mc{G}(\Psi\Delta\Psi')$ and let $K_0 \in C$ be arbitrary. We show that local $h_a$-maximality implies that $K_0\in\Psi\cap\Psi'$, which is a contradiction to the choice of $K_0$. We assume that 
$$\sum_{K\in C\cap\Psi}h_a(K)>\sum_{K'\in C \cap \Psi'}h_a(K'),$$
noting that the proof proceeds analogously if the inequality is reversed. Defining 
$$\Psi''=(\Psi'\setminus C)\cup(C\cap\Psi),$$
we claim that $\Psi''$ provides a counter-example to the local $h_a$-maximality of $\Psi'$. Indeed, $\Psi' \Delta \Psi''$ is contained in $C$ and therefore finite. Moreover, 
\begin{align*}
\sum_{K''\in\Psi''\setminus \Psi'}h_a(K'')=\sum_{K\in C\cap\Psi}h_a(K)>\sum_{K'\in C\cap\Psi'}h_a(K')=\sum_{K' \in \Psi' \setminus \Psi''}h_a(K'),
\end{align*}
which yields the desired contradiction to the local maximality of $\Psi'$.
\end{proof}

Next, we identify macroscopic point configurations that can act as a shield against disagreement percolation, i.e., percolation  of the graph $\mc{G}(\Psi \Delta \Psi')$ considered in Lemma~\ref{disagreePerc}.  For this purpose, for any integer $a \ge 1$, we say that a grain $K_0$ is \emph{$a$-huge} in a particle configuration $\varphi$ if 
$$h_a(K_0)>\sum_{\substack{K \in \varphi:\, K\cap K_0\ne\es\\ h_a(K)<h_a(K_0)}}h_a(K).$$
That is $h_a(K_0)$ is larger than then the sum of the $h_a$-values of all smaller grains intersecting $K_0$. First, we show that $a^{2d}$-huge grains occur with high probability.
\begin{lemma}
\label{dispHighProb}
The probability that all grains $K \in \Phi$ with $c(K)\in Q_{3a}(o)=[-\tfrac32a, \tfrac32a]^d$ are $a^{2d}$-huge in $\Phi$ tends to $1$ as $a \to \infty$. 
\end{lemma}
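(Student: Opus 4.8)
The plan is to estimate, for a single grain $K_0$ with center in $Q_{3a}(o)$, the probability that it fails to be $a^{2d}$-huge, and then take a union bound over a Poisson number of such grains. Write $r_0$ for the radius of $K_0$. The key point is the enormous separation of scales produced by $h_a$: since radii are uniform on $[0,1]$ and $h_a(B_r(x)) = \exp(ar)$, a grain of radius $r_0$ satisfies the hugeness inequality as soon as the total $h_a$-mass of intersecting grains of strictly smaller radius is less than $\exp(a r_0)$. A grain $K$ of radius $r < r_0$ that intersects $K_0$ has center within distance $r_0 + r \le 2$ of $c(K_0)$, so the number $N$ of intersecting smaller grains is stochastically dominated by a Poisson random variable with mean $\gamma_0 \kappa_d 2^d$ (independent of $a$), and each contributes at most $\exp(a r_0)$ — but crucially, the contribution of a grain is $\exp(ar)$ with $r$ uniform on $[0, r_0]$ conditionally, so the \emph{expected} contribution of each is $(e^{a r_0} - 1)/(a r_0)$, which is smaller than $e^{a r_0}$ by a factor of order $a$.

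First I would condition on $r_0$ and, by a Mecke-type / Palm computation, on the Poisson cloud of other grains. Given $r_0$, the smaller intersecting grains form a Poisson process; the sum $\Sigma := \sum_{K:\,K\cap K_0\ne\es,\ h_a(K)<h_a(K_0)} h_a(K)$ has, by the Campbell formula, mean bounded by $\gamma_0 \kappa_d 2^d \cdot \mathbb{E}[e^{aR}\mathbf{1}\{R<r_0\}] \le \gamma_0 \kappa_d 2^d (e^{ar_0}-1)/(ar_0)$. So $\mathbb{P}(\Sigma \ge e^{ar_0} \mid r_0) \le \mathbb{E}[\Sigma\mid r_0]/e^{ar_0} \le C/(a r_0)$ by Markov, where $C = \gamma_0\kappa_d 2^d$. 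This is small except when $r_0$ itself is tiny, of order $1/a$. To handle small $r_0$ I would split: the event that $K_0$ has radius below, say, $a^{-1/2}$ has probability $a^{-1/2}$, and on the complement the Markov bound gives failure probability $\le C a^{-1/2}$. Either way, a single grain fails to be $a^{2d}$-huge with probability $O(a^{-1/2})$.

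Next I would assemble the union bound. The number of grains of $\Phi$ with center in $Q_{3a}(o)$ is Poisson with mean $\gamma_0 (3a)^d$, which grows polynomially in $a$; each fails with probability $O(a^{-1/2})$. A naive union bound gives $O(a^{d-1/2})$, which does not tend to zero. So the single-grain estimate must be sharpened: instead of Markov, I would use a Chernoff/exponential bound on $\Sigma$. Since $\Sigma$ is a sum over a Poisson process of bounded terms ($0 \le e^{aR} \le e^{ar_0}$ per term, with $\le$ a Poisson$(C)$ number of terms), $\mathbb{P}(\Sigma \ge e^{ar_0}\mid r_0)$ decays exponentially in $e^{ar_0}/e^{ar_0} = 1$ rescaled appropriately — more precisely, writing $t = e^{ar_0}$ as the threshold and $\mu = \mathbb{E}[\Sigma\mid r_0] \le Ce^{ar_0}/(ar_0)$ as the mean, the Poisson-type upper tail gives $\mathbb{P}(\Sigma\ge t)\le \exp(-t/(2M))$ for $t \gg \mu$, where $M = e^{ar_0}$ is the maximal per-term size; this only gives $\exp(-1/2)$, still not enough. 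The genuine gain comes from noting that typically the \emph{largest} intersecting smaller grain has radius bounded away from $r_0$: conditionally on there being $n$ intersecting grains of radius $<r_0$, their radii are iid uniform on $[0,r_0]$, so the largest is below $(1-\delta)r_0$ with probability $1 - (1-\delta)^n$, and on that event $\Sigma \le n e^{a(1-\delta)r_0} = n e^{-a\delta r_0} \cdot e^{ar_0} \ll e^{ar_0}$ once $a\delta r_0 \gg \log n$. Balancing $n \le (\log a)^2$ (which holds except with super-polynomially small probability, since $n$ is Poisson$(C)$) and $r_0 \ge (\log a)^2/(a\delta)$, one gets that a single grain with $r_0 \ge a^{-1+o(1)}$ fails to be $a^{2d}$-huge with probability at most, say, $a^{-d-1}$, which survives the polynomial union bound; the remaining grains have $r_0 < a^{-1+o(1)}$, an event of probability $a^{-1+o(1)}$ per grain, and — here is the crucial structural observation — such a tiny grain $K_0$ is $a^{2d}$-huge vacuously unless some intersecting grain has even smaller radius, which forces a cascade that is exponentially unlikely; alternatively one simply notes the expected number of grains in $Q_{3a}(o)$ with radius $< a^{-1+o(1)}$ that fail is $\gamma_0(3a)^d \cdot a^{-1+o(1)} \cdot (\text{failure prob})$, and the failure probability for such grains, conditional on radius, is again $O((\text{small}))$ by the same argument applied at the smaller scale.

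I expect the main obstacle to be exactly this tension between the polynomial growth of the number of candidate grains and the need for a per-grain failure probability that beats it: the crude Markov bound $O(1/(ar_0))$ is not summable against $O(a^d)$ grains, so one must extract the additional decay from the order statistics of the radii of intersecting grains (the largest smaller grain is typically much smaller than $r_0$, giving an extra $e^{-a\delta r_0}$ factor) and simultaneously control the tiny-radius grains by a scale-recursive argument or by a direct second-moment/Mecke estimate on their expected number. Once the single-grain bound is pushed below $a^{-(d+1)}$ on the bulk of the radius range and the exceptional tiny-radius grains are shown to have vanishing expected count, the Mecke equation plus a union bound over the Poisson points in $Q_{3a}(o)$ finishes the proof.
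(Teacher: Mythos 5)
There is a genuine gap, and it stems from working with the wrong exponent. The lemma asserts that the grains are $a^{2d}$-huge, i.e.\ the relevant functional is $h_{a^{2d}}(B_r(x))=\exp(a^{2d}r)$, whereas your entire computation uses $h_a$, i.e.\ $\exp(ar)$. This is not cosmetic: with exponent $a$, a grain $K_0$ of radius $r_0$ fails to be huge as soon as one or two intersecting grains have radii within roughly $(\log n)/a$ of $r_0$, and since radii are uniform this near-coincidence has probability of order $1/a$ per intersecting grain, \emph{uniformly in} $r_0$. So the per-grain failure probability is genuinely $\Theta(1/a)$ (up to logarithms), and no refinement of Markov, Chernoff, or order statistics can push it below that; against the $\Theta(a^d)$ grains in $Q_{3a}(o)$ the union bound then diverges for $d\ge 2$. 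Your attempted rescue via order statistics also contains an error with exactly this consequence: the largest of $n$ iid uniforms on $[0,r_0]$ lies below $(1-\delta)r_0$ with probability $(1-\delta)^n$, not $1-(1-\delta)^n$, so the ``good'' event does not have high probability for fixed $\delta$; making it likely forces $\delta\approx(\log n)/(ar_0)$, which returns you to the $O(1/a)$ bound. The claimed per-grain bound $a^{-d-1}$ on the bulk radius range is therefore unobtainable in your setting, and the treatment of tiny radii (``cascade'' / ``scale-recursive argument'') is not an argument.

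With the correct functional $h_{a^{2d}}$ the difficulty evaporates, and the paper's proof is exactly the clean version of your first idea. By the multivariate Mecke formula it suffices to show that a grain $B_{r_0}(o)$ added at the origin fails to be $a^{2d}$-huge with probability $o(a^{-d})$. Introduce the two events that (i) at most $a$ grains of $\Phi$ intersect $K_0$, and (ii) no intersecting grain has radius in $(r_0-a^{-3d/2},r_0]$. The complement of (i) has exponentially small probability, and by Campbell's formula the complement of (ii) has probability $O(a^{-3d/2})=o(a^{-d})$, uniformly in $r_0$. On the intersection of the two events every smaller intersecting grain has radius at most $r_0-a^{-3d/2}$, so
$$\sum_{\substack{B_r(x)\cap K_0\ne\es\\ r<r_0}}\exp(a^{2d}r)\;\le\; a\,\exp\bigl(a^{2d}(r_0-a^{-3d/2})\bigr)\;=\;a\,e^{-a^{d/2}}\exp(a^{2d}r_0)\;<\;\exp(a^{2d}r_0)$$
for large $a$. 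Note that the factor $a^{2d}$ in the exponent is precisely what converts a radius gap of size $a^{-3d/2}$ (cheap enough to beat the $a^d$ union bound) into a multiplicative suppression $e^{-a^{d/2}}$ that dominates the count of intersecting grains; with exponent $a$ this trade-off is impossible, which is why the lemma is stated for $a^{2d}$-hugeness in the first place.
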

\begin{proof}
    By the multivariate Mecke formula~\cite[Theorem 4.4]{lecturesPoisson}, it suffices to show that the probability that an additional grain $K_0=B_{r_0}(o)$, $r_0\in [0,1]$ added at the origin fails to be $a^{2d}$-huge in $\Phi \cup \{B_{r_0}(o)\}$ is of order $o(a^{-d})$. Indeed, letting $U$ denote a uniform random variable that is independent of the Poisson particle process, 
	\begin{align*}
		&\E \#\{K \in \Phi:\, c(K) \in Q_{3a}(o) \text{ and $K$ is not $a^{2d}$-huge in $\Phi$} \}\\
        &\quad  = \lambda\int_{Q_{3a}(o)} \P(B_U(x)\text{ is not $a^{2d}$-huge in $\Phi \cup \{B_U(x)\}$}) \d x \\
		&\quad  = 3^da^d\lambda \P(B_U(o)\text{ is not $a^{2d}$-huge in $\Phi \cup \{B_U(x)\}$}).
	\end{align*}
Let 
$$E_{1,a}=\{\#\{K\in\Phi:\,K\cap K_0\ne\es\}\le a\}$$ 
denote the event that $K_0$ is intersected by at most $a$ grains from $\Phi$. Moreover, we let
$$E_{2,a}=\{r\not\in (r_0-a^{-\tfrac{3d}2},r_0]\text{ for all $K=B_r(x)\in\Phi$ with $K\cap K_0\ne\es$}\}$$ 
denote the event that for every grain $K=B_r(x)$ intersecting $K_0$	the radius $r$ is outside $(r_0-a^{-\tfrac{3d}2},r_0]$ . 

First, we claim that if $E_{1,a}$ and $E_{2,a}$ occur, then $K_0$ is $a^{2d}$-huge in $\Phi$. Indeed, in this case,
$$\sum_{\substack{B_r(x)\in\Phi:\, B_r(x)\cap K_0\ne\es\\ r<r_0}}h_{a^{2d}}(K)\le a\exp(a^{2d}(r_0-a^{-\tfrac32d})) <  \exp(a^{2d}r_0).$$
	Second, the probability of the complements of the events $E_{1,a}$ and $E_{2,a}$ is of order $o(a^{-d})$. Indeed, the probability of $E_{1,a}^c$ can be bounded from above by the probability that the ball $B_{2}(o)$ contains more than $a$ grain centers. As the grain centers form a Poisson point process, this probability decays to $0$ exponentially fast as $a\to\infty$. To bound the probability of $E_{2,a}^c$, we let $N$ denote the number grains of $\Phi$ whose centers are contained in $B_2(o)$. Then, 	noting that $2r_0\le2$, Campbell's formula gives that
\begin{align*}
    \P(E_{2,a}^c)&\le\E\#\{B_r(x) \in \Phi:\, (x, r) \in B_2(o) \times (r_0-a^{-\tfrac{3d}2},r_0]\}=a^{-\tfrac{3d}2}\E N,
\end{align*}
which is of order $o(a^{-d})$ by the finiteness of the Poisson particle intensity.
\end{proof}

Working only with $a$-huge grains simplifies the computation of locally maximal thinnings substantially. For instance, if $K_0$ is $a$-huge and is larger than any grain of $\Phi$ intersecting $K_0$, then $K_0$ is contained in any locally $h_a$-maximal thinning $\Psi$ of $\Phi$. Indeed, otherwise moving from $\Psi$ to 
$$(\Psi\cup\{K_0\})\setminus\{K\in\Psi:\, K\cap K_0\ne\es\}$$
would lead to a net increase of aggregated $h_a$-values, thereby contradicting local $h_a$-maximality of $\Psi$.
This observation can be generalized to multiple particles under consideration.

To capture dependencies between $a$-huge grains, we introduce a directed intersection graph $\mc{G}' = \mc{G}'(\Phi)$ on the vertex set $\Phi$ where we draw an edge from $K = B_r(x)$ to $K' = B_{r'}(x')$ if and only if $K \cap K' \ne \es$ and $r < r'$. The \emph{cluster} $\mc{C}(K) = \mc{C}(K,\Phi)$ of $K \in \Phi$ is defined as the set of all $K'\in\Phi$ that are reachable from $K$ via a directed path in $\mc{G}'$.
\begin{lemma}
\label{depGraphLem}
Let $a \ge 1$, $z \in \Z^d$ and assume that $K'$ is $a^{2d}$-huge in $\Phi$ for every $K'$ with $c(K') \in Q_{3a}(az)$. Let $\Psi, \Psi'$ be locally $h_{a^{2d}}$-maximal thinnings of $\Phi$ and let $K \in \Phi$ be such that $K'' \subset Q_{3a}(az)$ for all $K'' \in \mc{C}(K)$. Then, $K \not \in \Psi \Delta \Psi'$.
\end{lemma}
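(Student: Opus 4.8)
The plan is to pin down, by a single induction, the membership status of every grain of $\mc{C}(K)$ in an \emph{arbitrary} locally $h_{a^{2d}}$-maximal thinning, thereby generalizing the observation preceding the lemma. Since each $K''\in\mc{C}(K)$ is contained in the bounded box $Q_{3a}(az)$ and $\Phi$ is locally finite, the cluster $\mc{C}(K)$ is finite; we enumerate it as $L_1,\dots,L_N$ with $r(L_1)\ge\dots\ge r(L_N)$, and we write $K=L_{j_0}$, using the convention that $K$ itself belongs to $\mc{C}(K)$. We may assume that distinct grains of $\Phi$ have distinct radii, which holds almost surely under the hypotheses of Theorem~\ref{highDensProp}; without such a genericity assumption the statement is false, since two equal overlapping balls can be selected differently by different locally maximal thinnings. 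The inductive claim $P(j)$ reads: for every $i\le j$, the grain $L_i$ lies either in both $\Psi$ and $\Psi'$ or in neither. Once $P(N)$ is proved, $K=L_{j_0}$ lies in both or in neither, hence $K\notin\Psi\Delta\Psi'$, as desired.

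The structural fact that makes the ordering work is the following: if $M\in\Phi$ satisfies $M\cap L_j\ne\es$ and $r(M)>r(L_j)$, then $\mc{G}'$ contains the edge $L_j\to M$, so appending it to a directed path from $K$ to $L_j$ shows $M\in\mc{C}(K)$, whence $M=L_i$ for some $i<j$. Thus, at the moment $L_j$ is processed, every grain of $\Phi$ that meets $L_j$ is either strictly smaller in radius than $L_j$ or has already been classified by $P(j-1)$.

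For the inductive step, assume $P(j-1)$. If some $L_i$ with $i<j$ meets $L_j$ and lies in both $\Psi$ and $\Psi'$, then the hard-core property forces $L_j$ out of both, and $P(j)$ follows. Otherwise every already-classified grain meeting $L_j$ lies in neither thinning, so by the structural fact every grain of $\Psi$ (and of $\Psi'$) meeting $L_j$ has radius, hence $h_{a^{2d}}$-value, strictly smaller than that of $L_j$. If $L_j\notin\Psi$, replacing $\Psi$ by $\Psi''=(\Psi\cup\{L_j\})\setminus\{M\in\Psi:\,M\cap L_j\ne\es\}$ yields a hard-core thinning of $\Phi$ differing from $\Psi$ in finitely many grains, with
\[
\sum_{M\in\Psi:\,M\cap L_j\ne\es}h_{a^{2d}}(M)\le\sum_{\substack{M\in\Phi:\,M\cap L_j\ne\es\\ h_{a^{2d}}(M)<h_{a^{2d}}(L_j)}}h_{a^{2d}}(M)<h_{a^{2d}}(L_j),
\]
the final inequality being exactly the $a^{2d}$-hugeness of $L_j$, which applies because $c(L_j)\in L_j\subset Q_{3a}(az)$. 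Hence the swap strictly increases the aggregate $h_{a^{2d}}$-value, contradicting local $h_{a^{2d}}$-maximality of $\Psi$; thus $L_j\in\Psi$, and symmetrically $L_j\in\Psi'$. This proves $P(j)$ and completes the induction.

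The principal obstacle is not a hard estimate but the bookkeeping just described: one must ensure that when the forcing swap at $L_j$ is carried out, the only grains of $\Psi$ that have to be discarded are ones strictly smaller than $L_j$ (so that $a^{2d}$-hugeness covers exactly their combined weight), which is guaranteed by processing $\mc{C}(K)$ in decreasing order of radius together with the observation that any larger neighbour of $L_j$ already sits in $\mc{C}(K)$. The secondary delicate point, as noted above, is the genericity of radii needed to rule out ties; everything else is a routine combination of the hard-core constraint with the definition of local maximality.
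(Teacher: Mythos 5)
Your proof is correct and follows essentially the same route as the paper's: an induction along the directed graph $\mc{G}'$ (you sweep the cluster in decreasing order of radius, whereas the paper recurses on $\#\mc{C}(K)$, but the mechanism — $a^{2d}$-hugeness combined with the fact that every larger overlapping grain lies earlier in the order and is hence already classified — is identical). Your explicit observation that distinct radii are needed to rule out ties is a point the paper leaves implicit (it holds almost surely in the setting where the lemma is applied) and is worth retaining.
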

\begin{proof}
    Assume that $K \in \Psi$. We need to show that $K \in \Psi'$. We proceed by induction on the cardinality of $\mc{C}(K)$. The case $\#\mc{C}(K) = 1$ has already been treated in the discussion preceding the lemma, so that we may assume $\#\mc{C}(K)>1$. Now, let $\{K_1, \ldots, K_m\}$ denote the grains to which an edge is drawn from $K$ in $\mc{G}'$. In order to derive a contradiction, we assume that $K \not \in \Psi'$.
By the hard-core property of $\Psi$, we have $K_i\not \in \Psi$ for every $1 \le i \le m$. Hence, by induction we conclude that also $K_i \not \in \Psi'$ for every $1\le i \le m$. In particular, $\Psi''=(\Psi'\setminus A) \cup \{K\},$
is hard-core, where 
$$A=\{K' \in \Phi \setminus \{K_1,\ldots,K_m\}:\,K' \cap K\ne\es\}$$
	denotes the family of grains different from $K_1,\ldots,K_m$ intersecting $K$. Since $K$ is $a^{2d}$-huge in $\Phi$, we see that moving from $\Psi'$ to $\Psi''$ leads to a net increase in $h_{a^{2d}}$-values, thereby contradicting the assumption of local $h_{a^{2d}}$-maximality of $\Psi'$.
\end{proof}
Now, a site $z \in \Z^d$ is \emph{$a$-good} if 
\begin{enumerate}
	\item $K$ is $a^{2d}$-huge in $\Phi$ for every $K \in \Phi$ with $c(K) \in Q_{3a}(az)$, and 
\item $\mc{C}(K) \subset Q_{3a}(az)$ for every $K \in \Phi$ with $c(K) \in Q_a(az)$. 
\end{enumerate}
Then, as $a \to \infty$, $a$-good sites occur whp.
\begin{lemma}
\label{goodHighProb}
The probability that any given site is $a$-good tends to $1$ as $a \to \infty$.
\end{lemma}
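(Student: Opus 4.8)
The plan is to bound the probability of the complement of $a$-goodness by the sum of the failure probabilities of its two defining conditions, and to show each of these is $o(1)$ as $a\to\infty$. The first condition has essentially already been handled: Lemma~\ref{dispHighProb} states precisely that the probability that \emph{all} grains $K\in\Phi$ with $c(K)\in Q_{3a}(o)$ are $a^{2d}$-huge tends to $1$, and by stationarity the same holds for $Q_{3a}(az)$ for any fixed $z$. So the work is concentrated entirely in controlling condition (ii), namely that every grain $K$ with center in the inner box $Q_a(az)$ has its entire directed cluster $\mc{C}(K)$ contained in the larger box $Q_{3a}(az)$.

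For condition (ii), the key point is that a directed path in $\mc{G}'$ from $K=B_r(x)$ consists of grains of strictly increasing radius, all of radius at most $1$, and consecutive grains in the path overlap, so each step moves the center by at most $2$. Thus a directed path of combinatorial length $\ell$ starting at a grain with center in $Q_a(az)$ can only reach grains whose centers lie within sup-distance $2\ell$ of $Q_a(az)$; in particular if every directed path out of every such $K$ has length at most, say, $a/10$, then $\mc{C}(K)\subset Q_{3a}(az)$ (here we use that the grains themselves have radius at most $1$, so containment of the grain, not just its center, follows with room to spare once $a$ is large). Hence it suffices to show that, with probability tending to $1$, there is no directed path in $\mc{G}'(\Phi)$ of length $\lceil a/10\rceil$ whose starting grain has center in $Q_a(az)$. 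By the multivariate Mecke formula, the expected number of such directed paths is bounded by an integral over $(\R^d)^{\lceil a/10\rceil}$ of the probability that $\lceil a/10\rceil$ grains added at those locations, with independent uniform radii, form a directed path; the geometric constraints (each center within distance $2$ of the previous, radii strictly increasing) force the relevant configuration volume to be at most $C^{a/10}$ for a constant $C$, while the probability that a fixed tuple of uniform radii is strictly increasing is $1/(\lceil a/10\rceil)!$, which beats the exponential. More carefully, I would fix the starting center in $Q_a(az)$ (contributing a factor $a^d$), then integrate the remaining centers over balls of radius $2$ around the previous center (factor $(c_d 2^d)^{a/10}$), and note that conditioning on these centers the radii are i.i.d. uniform, so the probability they are increasing along the path is $1/(\lceil a/10\rceil)!$; by Stirling this product is $o(1)$.

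Putting the two pieces together: $\P(z\text{ not }a\text{-good})\le \P(\text{(i) fails})+\P(\text{(ii) fails})$, the first term is $o(1)$ by Lemma~\ref{dispHighProb} and stationarity, and the second is $o(1)$ by the first-moment path-counting argument just sketched, so $\P(z\text{ is }a\text{-good})\to1$. By stationarity the probability does not depend on $z$, which gives the ``any given site'' phrasing of the statement.

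The main obstacle I anticipate is making the first-moment bound on directed paths clean: one has to be a little careful that a directed path is a \emph{simple} path (no repeated vertices), and that the Mecke computation genuinely produces a factorial from the strict monotonicity of the radii rather than merely an exponential saving — the radii being i.i.d.\ uniform on $[0,1]$ is exactly what is needed, since $\P(R_{\pi(1)}<\cdots<R_{\pi(\ell)})=1/\ell!$ for exchangeable continuous radii. A secondary technical point is passing from ``center in $Q_a(az)$ and path length $\le a/10$'' to the actual containment $\mc{C}(K)\subset Q_{3a}(az)$ of the full grains; this is a routine triangle-inequality estimate using that all radii are bounded by $1$, but it is worth stating explicitly so that the choice of the constant $3$ (versus, say, $2$) in the definition of $a$-good is seen to be comfortably sufficient.
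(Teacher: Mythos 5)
Your proposal is correct and follows essentially the same route as the paper: split off condition (i) via Lemma~\ref{dispHighProb}, observe that a violation of condition (ii) forces a directed chain of order-$a$ many grains with centers at pairwise distance at most $2$ and monotone radii, and kill the expected number of such chains by a Mecke/first-moment computation in which the $1/k!$ from the monotone radii beats the exponential volume factor (the paper's bound is $a^d(2^d\kappa_d)^{\lfloor a/4\rfloor}/\lfloor a/4\rfloor!$, matching your estimate up to the choice of constant). The only differences are cosmetic: the paper uses chain length $\lfloor a/4\rfloor$ rather than $\lceil a/10\rceil$ and phrases monotonicity in terms of volumes rather than radii.
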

\begin{proof}
	By stationarity, we may assume that the given site is the origin. First, we note that if there exists $K \in \Phi$ with $c(K) \in Q_a(az)$ and $\mc{C}(K) \not \subset Q_{3a}(az)$, then there exists a sequence of $k\ge k_0(a) = \lfloor a/4 \rfloor$ grains $K_0 = K, K_1, \ldots, K_k \in \Phi$ such that 
\begin{enumerate}
\item $|c(K_i) - c(K_{i+1})| \le 2$ for every $0 \le i \le k-1$,
\item $\lambda_d(K_i) \le \lambda_d(K_{i+1})$ for every $0 \le i \le k-1$. 
\end{enumerate}
	By the multivariate Mecke formula, or the form of the factorial moment measures of a Poisson particle process, the expected number of such sequences is bounded above by $a^d(2^d\kappa_d)^{k_0(a)}/k_0(a)!$, which tends to $0$ as $a\to\infty$. An application of Lemma~\ref{dispHighProb} completes the proof.
\end{proof}

After having shown that $a$-good sites occur with high probability, we now prove that the associated cubes can act as shields against disagreement percolation. To make this precise, for $\Gamma\subset\Z^d$ we put $\Gamma^{a}=\cup_{z\in\Gamma}Q_a(az)$.

\begin{lemma}
\label{shieldLem}
Let $\Gamma\subset\Z^d$ be a finite set of $a$-good sites and $K\in\Phi$ be such that $c(K) \not\in \Gamma^a$. Assume that $\Gamma^a$ separates $c(K)$ from $\infty$ in the sense that there does not exist an unbounded curve starting from $c(K)$ and not intersecting $\Gamma^a$. Then, $K \not\in \Psi \Delta \Psi'$ for any locally $h_{a^{2d}}$-maximal thinnings $\Psi,\Psi'$ of $\Phi$.
\end{lemma}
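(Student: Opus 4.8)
The plan is to argue by contradiction, feeding the disagreement‑percolation statement of Lemma~\ref{disagreePerc} into the cluster estimate of Lemma~\ref{depGraphLem}. Suppose $K\in\Psi\Delta\Psi'$. By Lemma~\ref{disagreePerc} the connected component of $K$ in $\mc{G}(\Psi\Delta\Psi')$ is infinite, and since $\mc{G}(\Psi\Delta\Psi')$ is almost surely locally finite (each grain of $\Phi$ has radius at most $1$ and hence meets only finitely many others), this component contains an infinite self‑avoiding path $K=K_0,K_1,K_2,\dots$ of grains in $\Psi\Delta\Psi'$ with $K_i\cap K_{i+1}\ne\es$ and therefore $|c(K_i)-c(K_{i+1})|\le r_i+r_{i+1}\le 2$, where $r_i$ denotes the radius of $K_i$. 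Because $\Gamma$ is finite, $\Gamma^a$ is bounded, so the hypothesis that $\Gamma^a$ separates $c(K)$ from $\infty$ means precisely that the connected component $D$ of $\R^d\setminus\Gamma^a$ containing $c(K)$ is bounded. As $\Phi$ is locally finite, only finitely many grain centers lie in the compact set $\overline D$, whereas the $c(K_i)$ are almost surely pairwise distinct; hence $c(K_j)\notin D$ for some $j$, and we let $j\ge 1$ be minimal with this property.

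The next step is to locate a grain of the chain that is certified to touch the shield. Since $c(K_{j-1})\in D$ while $c(K_j)\notin D$ and $D$ is a connected component of the open set $\R^d\setminus\Gamma^a$, the segment $[c(K_{j-1}),c(K_j)]$ must meet $\Gamma^a$; fix $p$ in this intersection and $z\in\Gamma$ with $p\in Q_a(az)$. Writing $p=c(K_{j-1})+t\,(c(K_j)-c(K_{j-1}))$ with $t\in[0,1]$ and using $|c(K_{j-1})-c(K_j)|\le r_{j-1}+r_j$, at least one of $t\,|c(K_{j-1})-c(K_j)|\le r_{j-1}$ and $(1-t)\,|c(K_{j-1})-c(K_j)|\le r_j$ holds, i.e.\ $p\in K_{j-1}$ or $p\in K_j$. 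Consequently there is $i\in\{j-1,j\}$ with $K_i\cap Q_a(az)\ne\es$, and since $r_i\le 1\le a$ this forces $c(K_i)\in Q_a(az)\oplus[-1,1]^d\subset Q_{3a}(az)$; moreover $c(K_i)$ then lies at $\ell^\infty$‑distance at least $a-1$ from $\partial Q_{3a}(az)$, so it is deep inside $Q_{3a}(az)$.

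It remains to apply Lemma~\ref{depGraphLem} at the $a$‑good site $z$. Condition~(i) in the definition of an $a$‑good site gives that every grain centered in $Q_{3a}(az)$, in particular $K_i$, is $a^{2d}$‑huge in $\Phi$. Because $c(K_i)$ sits deep inside $Q_{3a}(az)$, any directed increasing‑radius chain of overlapping grains starting at $K_i$ and leaving $Q_{3a}(az)$ would have to consist of at least $(a-1)/2$ grains, which the $a$‑good property of $z$ rules out (this is the content of condition~(ii), which, if one prefers, may be imposed for all grains centered within distance $1$ of $Q_a(az)$ rather than only in $Q_a(az)$ itself — enlarging the cube by this bounded amount leaves the high‑probability estimate of Lemma~\ref{goodHighProb} intact). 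Hence $K''\subset Q_{3a}(az)$ for all $K''\in\mc{C}(K_i)$, so Lemma~\ref{depGraphLem} yields $K_i\notin\Psi\Delta\Psi'$, contradicting $K_i\in\{K_{j-1},K_j\}\subset\Psi\Delta\Psi'$. The main obstacle is precisely this closing bookkeeping: a grain may poke into a good cube without having its center there, so one has to verify — via the segment‑crossing argument above rather than the naive claim that some $c(K_i)$ lies in $\Gamma^a$ — that such a grain is nevertheless centered far enough inside the surrounding $Q_{3a}$‑cube for the cluster bound of $a$‑goodness to apply; the delicate instances are the corner configurations in which $c(K_{j-1})$ and $c(K_j)$ straddle a lower‑dimensional face of the cube tiling.
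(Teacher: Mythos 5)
Your argument is correct and follows the same route as the paper: argue by contradiction, use Lemma~\ref{disagreePerc} to produce an infinite path of disagreement starting at $K$, locate a grain of that path inside a good cube, and invoke Lemma~\ref{depGraphLem}. You are right that the step asserting that some center of the path lies in a cube $Q_a(az)$ with $z\in\Gamma$ is not automatic (a segment of the center-curve of length at most $2$ can clip a cube near a corner without either endpoint entering it), and your segment-crossing argument only places $c(K_i)$ in $Q_a(az)\oplus[-1,1]^d$; as you yourself flag, closing the argument then requires imposing condition (ii) of $a$-goodness for all grains centered in this slightly enlarged cube (condition (ii) as literally stated does not control clusters of grains centered just outside $Q_a(az)$), which is a harmless strengthening that leaves the estimate of Lemma~\ref{goodHighProb} and the application in Theorem~\ref{highDensProp} intact.
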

\begin{proof}
If $K \in \Psi \Delta \Psi'$, then Lemma~\ref{disagreePerc} implies that there exists an infinite self-avoiding path $\pi$ in the contact graph $\mc{G}$ that starts from $K$, and has the property that $K' \in \Psi \Delta \Psi'$  for every $K' \in \pi$. Next, by the choice of $\Gamma$, there exist $z_1 \in \Gamma$ and $K_1 \in \pi$ such that $c(K_1) \in Q_a(az_1)$. But this gives a contradiction to Lemma~\ref{depGraphLem}, which necessitates that $K_1 \not \in \Psi \Delta \Psi'$.
\end{proof}

\begin{proof}[Proof of Theorem~\ref{highDensProp}]
The percolation process of $a$-good sites is $4$-dependent. Moreover, by Lemma~\ref{goodHighProb}, the probability that any given site is $a$-good tends to $1$ as $a \to \infty$. Hence, by the dependent-percolation result~\cite[Theorem 0.0]{domProd}, it is possible to choose $a\ge1$ sufficiently large such that with probability $1$, for every $K\in\Phi$ there exists a finite set $\Gamma$ of $a$-good sites such that every unbounded continuous curve starting from $c(K)$ intersects some site of $\Gamma^a$. Now, we may apply Lemma~\ref{shieldLem} to conclude the proof.
\end{proof}

\appendix
\section{Proof of Lemma~\ref{diffIneqProp}}
\label{appSec}
In this section, we sketch the most important ideas needed to prove Lemma~\ref{diffIneqProp}. For details, the reader is referred to~\cite{strictIneq}, especially~\cite[Lemma 3.2]{strictIneq}.
In order to facilitate the transfer, we try to stay as closely as possible to the notation used in~\cite{strictIneq}.

First, it is useful to interpret the partial derivatives appearing in Lemma~\ref{diffIneqProp} in terms of pivotal events by using a formula of Margulis-Russo type, see~\cite[Theorem 19.1]{lecturesPoisson}. Consider a grain $K_0$ whose center $x=c(K_0)$ is contained in $B_n(o)$. This grain is \emph{$1$-pivotal} if an uncolored path of overlapping grains from $B_1(o)$ to $B_n(o)\setminus B_{n-1}(o)$ exists if $K_0$ is active, but such a path does not exist if $K_0$ is red. Similarly, an active and special dispensable grain $K_0$ is \emph{$2$-pivotal} if an uncolored path from $B_{1}(o)$ to $B_n(o)\setminus B_{n-1}(o)$ exists if and only if $K_0$ is not green. The event that $K_0$ is $i$-pivotal is denoted by $E_{n,i}(K_0)$ and we put $P_{n,i}(K_0,p,q)=\P(E_{n,i}(K_0))$. The relation between Lemma~\ref{diffIneqProp} and pivotal probabilities is expressed by the following formula of Margulis-Russo type, see~\cite[Lemma 3.1]{strictIneq} and~\cite[Theorem 19.1]{lecturesPoisson}. 
\begin{lemma}
	\label{mrLem}
Let $n\ge1$ and $p,q\in(0,1)$. Then,
$$
\frac{\partial\theta_n(p, q)}{\partial p} = \gamma_0 \int_{B_n(o)} \int_{\mc{K}}P_{n,1}(x + K,p, q)\Q(\d K)\d x,$$
and
$$\frac{\partial\theta_n(p, q)}{\partial q} = \gamma_0 \int_{B_n(o)} \int_{\mc{K}}P_{n,2}(x + K,p, q)\Q(\d K)\d x.$$
\end{lemma}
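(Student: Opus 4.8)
To prove Lemma~\ref{mrLem}, the plan is to obtain both identities as instances of the Margulis--Russo formula for Poisson processes, along the lines of~\cite[Lemma 3.1]{strictIneq} and~\cite[Theorem 19.1]{lecturesPoisson}; the only genuinely new feature compared with~\cite{strictIneq} is that grains carry a random radius, which affects only the geometry and not the probabilistic skeleton. First I would encode the two colouring stages through independent marks: attach to each grain $K\in\Phi$ a pair $(U_K,W_K)$ of independent uniform random variables on $[0,1]$, declare $K$ \emph{active} iff $U_K\le p$, and declare a special dispensable grain \emph{green} iff $W_K\le1-q$. Then $\{(K,U_K,W_K):\,K\in\Phi\}$ is a Poisson process with intensity $\gamma_0\,\ell(\d K)\,\d u\,\d w$, where $\ell$ is the reference measure on $\mc K$ determined by $\int f\,\d\ell=\int_{\R^d}\int_{\mc K}f(x+K)\,\Q(\d K)\,\d x$, and, as recorded in the text, the crossing event defining $\theta_n(p,q)$ is increasing in $p$ and in $q$ separately.

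Next I would localise. A chain of overlapping grains joining $B_1(o)$ to $B_n(o)\setminus B_{n-1}(o)$ through centres in $B_n(o)$ uses only grains with centres in a fixed bounded neighbourhood of $B_n(o)$; moreover, condition (iii) in the definition of (special) dispensability is insensitive to grains lying far from both $K$ and $K'$, since such a grain meets neither and satisfies the implication vacuously. Hence, on each fixed realisation of $\Phi$ and of the marks $(W_K)$, the indicator of the crossing event is a monotone function of the finitely many activity indicators $\one\{U_K\le p\}$. Differentiating in $p$, applying the elementary Russo formula coordinate by coordinate (using $\tfrac{\partial}{\partial p}\P(U_K\le p)=1$, and that switching a grain on changes the monotone indicator from $0$ to $1$ exactly on the event of being $1$-pivotal), and taking expectations gives
\[
\frac{\partial\theta_n(p,q)}{\partial p}=\E\sum_{K\in\Phi}\one\{E_{n,1}(K)\}.
\]
The parallel computation in $q$ — now $\tfrac{\partial}{\partial q}\P(W_K>1-q)=1$, and only special dispensable grains affect the indicator, which is why the definition of $E_{n,2}(K)$ carries that requirement — yields $\partial_q\theta_n(p,q)=\E\sum_{K\in\Phi}\one\{E_{n,2}(K)\}$. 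The interchange of differentiation and expectation is justified by dominated convergence, the difference quotients being controlled by the number of grains with centre in the relevant bounded window, which has finite mean.

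Finally I would apply the Mecke equation~\cite{lecturesPoisson} for the marked Poisson process to each sum. Writing $f(K,\Phi)$ for the indicator that $K$ is $i$-pivotal in a configuration $\Phi$ containing $K$, the Mecke equation turns $\E\sum_{K\in\Phi}f(K,\Phi)$ into
\[
\gamma_0\int_{\mc K}\E[f(K,\Phi\cup\{K\})]\,\ell(\d K)=\gamma_0\int_{\mc K}P_{n,i}(K,p,q)\,\ell(\d K),
\]
the marks of the added grain being integrated out, which is exactly the averaging implicit in the probability $P_{n,i}$. Unfolding $\ell$ as above and observing that $P_{n,i}(x+K,p,q)=0$ whenever $x\notin B_n(o)$, because a grain centred outside $B_n(o)$ can never belong to a crossing of $B_n(o)$ and hence can never be pivotal, produces the two stated formulas. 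I expect the step needing the most care to be the bookkeeping behind the pivotal events for the $p$-derivative: switching a single grain's activity can make an active witness $K'$ available to, or unavailable for, some \emph{other} grain in the definition of special dispensability, thereby changing which grains are eligible to be coloured green; one must therefore read the event ``$K$ is $1$-pivotal'' as incorporating all such cascading effects and check that this remains compatible with monotonicity in $p$. This is exactly where the constructions of~\cite[Lemma 3.1]{strictIneq} have to be revisited in the presence of random radii, but it introduces no conceptual difficulty.
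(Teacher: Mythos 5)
Your argument is correct and is exactly the route the paper intends: the paper gives no proof of Lemma~\ref{mrLem} beyond citing the Margulis--Russo formula of~\cite[Lemma 3.1]{strictIneq} and the Mecke/Russo machinery of~\cite[Theorem 19.1]{lecturesPoisson}, and your marked-Poisson encoding, localisation, coordinatewise Russo differentiation, and final application of the Mecke equation is precisely that standard argument. You also correctly flag the one genuinely delicate point (the cascading effect of a single activity switch on which grains are special dispensable, and hence on monotonicity in $p$), which is the same caveat the paper defers to the cited references.
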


Before proving Lemma~\ref{diffIneqProp}, we establish an auxiliary result allowing us to assume that diminishments are suppressed in a fixed annulus around $x$. More precisely, let $A_{\alpha,\beta}(x) = B_\beta(x)\setminus B_{\alpha}(x)$ denote the annulus of outer radius $\beta > 0$ and inner radius $\alpha > 0$ around $x \in \R^d$. We let $R_{n, \alpha, \beta}(x)$ denote the event that all active and special dispensable grains with center in $A_{\alpha, \beta}(x)$ are uncolored.
The proof of the following result is parallel to~\cite[Lemma 3.3]{strictIneq}, but for the convenience of the reader, we provide some details. We recall that the support of the radius distribution is given by $[1, m]$, where $m$ shall denote the constant introduced in~\eqref{mEq}.
\begin{lemma}
\label{noRedLem}
Let $\alpha > 8m$ and $\beta > \alpha + 8m$. Then, there exists a continuous function $\gamma_1 : (0, 1)^2 \to (0,\infty)$ such that
    $$\P(E_{n,1}(x + K) \cap R_{n,\alpha,\beta}(x))\ge \gamma_1(p, q) P_{n,1}(x + K,p,q)$$
    holds for all $n>\beta+8m$, $p\in(0,1)$, $x\in B_{\alpha - 6m}(o)\cup B_n(o) \setminus B_{\beta + 6m}(o)$ and $\Q$-almost all $K \in \mc{K}$.
\end{lemma}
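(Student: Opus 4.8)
The plan is to show that one can modify the underlying Poisson particle process inside the annulus $A_{\alpha,\beta}(x)$, at a bounded cost in probability, so that no active grain centered there is special dispensable, and then argue that this modification does not destroy the $1$-pivotality of $x+K$. Concretely, I would condition on the configuration of grains with centers \emph{outside} the annulus $A_{\alpha,\beta}(x)$, and on the event $E_{n,1}(x+K)$, which depends on this outside configuration together with the grains inside (and their colors). The key geometric point, already exploited in~\cite[Lemma 3.3]{strictIneq}, is that $1$-pivotality of $x+K$ is a statement about the existence and non-existence of \emph{uncolored} connecting paths; such paths that matter for $x+K$ either stay near $B_1(o)$ and $B_n(o)\setminus B_{n-1}(o)$, or must cross the thick annulus $A_{\alpha,\beta}(x)$. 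Because $x$ is forced to lie either well inside $B_\alpha(x)$-region or well outside $B_\beta(x)$-region (the hypothesis $x \in B_{\alpha-6m}(o) \cup B_n(o)\setminus B_{\beta+6m}(o)$ together with $\beta > \alpha + 8m$), the ball $x+K \subset B_m(x)$ is disjoint from the annulus $A_{\alpha,\beta}(x)$, so altering the annulus configuration does not touch $x+K$ itself.

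The main step is then a resampling argument. Inside $A_{\alpha,\beta}(x)$ I would replace the restriction of $\Phi$ by an independent copy conditioned on a favorable event $F$: namely that there is a connected ``corridor'' of uncolored active grains, of radii bounded away from the special-dispensable threshold (so that by property (ii) in the definition of special dispensable no grain in the corridor can ever be special dispensable), that joins the inner boundary sphere $\partial B_\alpha(x)$ to the outer boundary sphere $\partial B_\beta(x)$, and simultaneously that the annulus contains no special dispensable active grain at all --- this last part is what the event $R_{n,\alpha,\beta}(x)$ asks for, and it can be arranged by also requiring every active grain in the annulus to be uncolored, which has conditional probability bounded below by a power of $p$ depending only on a bound for the Poisson mean in the annulus. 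Since the annulus is an \emph{annulus of bounded width} (both $\alpha$ and $\beta$ are fixed constants, not growing with $n$), the probability $\P(F)$ is a strictly positive constant $c(p,q)$ independent of $n$, $x$, and $K$; moreover $F$ only involves grains inside the annulus and is independent of the outside configuration and of $E_{n,1}(x+K)$ restricted to the outside data. The corridor guarantees that any uncolored path connecting $B_1(o)$ to $B_n(o)\setminus B_{n-1}(o)$ that used the old annulus configuration can be rerouted through the corridor, so that $1$-pivotality is preserved: activeness of $x+K$ still produces a connecting path (rerouted if necessary), and if no path existed before, adding only uncolored grains in a controlled corridor near $A_{\alpha,\beta}(x)$ still produces none, because any new path would have to pass through $x+K$, which lies outside the annulus and is unchanged. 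Hence $E_{n,1}(x+K)$ still holds after resampling, and now $R_{n,\alpha,\beta}(x)$ holds as well.

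Assembling the pieces: let $\gamma_1(p,q) = c(p,q)$ be the conditional probability of $F$. Decomposing according to the outside configuration, we get
\begin{align*}
\P\big(E_{n,1}(x+K)\cap R_{n,\alpha,\beta}(x)\big) &\ge \E\Big[\one\{E_{n,1}(x+K)\}\,\P\big(F \,\big|\, \text{outside}\big)\Big]\\
&= \gamma_1(p,q)\,\P\big(E_{n,1}(x+K)\big) = \gamma_1(p,q)\,P_{n,1}(x+K,p,q),
\end{align*}
where the first inequality uses that on $F$ both $E_{n,1}(x+K)$ persists and $R_{n,\alpha,\beta}(x)$ occurs, and the conditional independence of $F$ from the outside. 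Continuity of $\gamma_1$ in $(p,q)$ follows because it is built from finitely many Poisson and coloring probabilities over a fixed bounded region, all of which depend continuously (indeed real-analytically) on $p$ and $q$.

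The hard part will be the geometric bookkeeping in the rerouting step: one must check that a connecting path passing through the annulus can always be diverted into the prescribed uncolored corridor without creating or destroying connections elsewhere, and that the corridor can be built from grains whose radii are pinned into a sub-range of $[1,m]$ dense enough to percolate across the width-$(\beta-\alpha)$ annulus --- this is exactly where the constraints $\alpha > 8m$ and $\beta > \alpha + 8m$ (and the placement restriction on $x$) are used, ensuring there is enough room, and where one leans on the constructions of~\cite[Lemma 3.3]{strictIneq}, transferred to the random-radius setting by working with radii confined to, say, $[1,1.01]$ so that property (ii) of the special-dispensable definition automatically excludes the corridor grains. Everything else --- the conditional independence, the uniform-in-$n$ positivity of $\P(F)$, and the continuity --- is routine once the geometric claim is in place.
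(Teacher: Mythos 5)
Your proposal diverges from the paper's argument in a way that creates a genuine gap. The paper never resamples the Poisson points in the annulus: it keeps the whole point configuration fixed, reveals the colors of everything except the boundedly many grains in $A_{\alpha,\beta}(x)$ that could possibly be special dispensable (radius at most $1.05$, intersecting at most three other grains), and then observes that on $E_{n,1}(x+K)$ there is a witnessing coloring of these grains; replacing that coloring by ``red if it was colored, active-but-not-green if it was uncolored'' leaves the set of \emph{uncolored} grains exactly unchanged, so $1$-pivotality is preserved verbatim, while $R_{n,\alpha,\beta}(x)$ now holds by construction. Since the number of such grains is deterministically bounded, this recoloring has conditional probability at least $\min(1-p,pq)^N$, which gives $\gamma_1$. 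Your scheme instead resamples the point process in the annulus and inserts a new uncolored corridor joining $\partial B_\alpha(x)$ to $\partial B_\beta(x)$. The fatal issue is the \emph{negative} half of $1$-pivotality: $E_{n,1}(x+K)$ also asserts that no uncolored path from $B_1(o)$ to $B_n(o)\setminus B_{n-1}(o)$ exists when $x+K$ is red. Adding uncolored grains can only create connections, and your corridor is precisely the kind of object that can splice together two outside clusters that were previously joined only through $x+K$ (which sits inside $B_\alpha(x)$, not in the annulus), thereby producing a bypass and destroying pivotality. Your assertion that ``any new path would have to pass through $x+K$'' is unjustified and false in general, and your displayed inequality conflates the event $E_{n,1}(x+K)$ evaluated on the original configuration with the same event on the resampled one.

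A secondary but related problem is that $E_{n,1}(x+K)$ is not measurable with respect to the outside-the-annulus data, so the factorization $\E[\one\{E_{n,1}\}\,\P(F\mid\text{outside})]=\gamma_1\,\P(E_{n,1})$ does not follow from independence of $F$ and the outside configuration alone; you would need $E_{n,1}$ to survive the resampling, which is exactly the unproved rerouting claim. The fix is to abandon resampling of points altogether and argue, as the paper does, purely at the level of a finite-energy modification of the colors of a deterministically bounded set of grains, which sidesteps both difficulties at once.
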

\begin{proof}
We first generate the Poisson Boolean model in $B_n(o)$ and decide for all grains centered outside $A_{\alpha,\beta}(x)$ whether they are red or active. Second, we decide which special dispensable grains centered outside $A_{\alpha,\beta}(x)$ are green. Third, we decide for all grains with center in $A_{\alpha,\beta}(x)$ that intersect more than three other grains or have radius larger than $1.05$ whether they are red or active, noting that such grains can never be special dispensable. Let $W$ denote the remaining grains with centers in $A_{\alpha,\beta}(x)$. 

On the event $E_{n,1}(x + K)$, there exists a coloring of these remaining grains for which $x + K$ is $1$-pivotal. Using this information, we construct the remaining coloring on $W$. If $x + K\in W$ was colored under the coloring provided by $E_{n,1}(x + K)$, then its new color is red. Otherwise, it is active, but not green. In particular, both the old and the new colorings have the same uncolored grains, so that $x + K$ remains $1$-pivotal under the new coloring. Since the number of grains in $W$ is bounded above by some constant, we obtain the desired positive lower bound $\gamma_1(p, q)$ for the probability of observing the coloring.
\end{proof}

In order to prove Lemma~\ref{diffIneqProp}, we need four technical but elementary geometric auxiliary results concerning the existence of configurations of balls exhibiting prescribed intersection patterns. First, we show that grains can be perturbed a little bit without destroying a given intersection pattern. 

\begin{lemma}
\label{elGeomLem-1}
There exists $\delta_1 = \delta_1(d,m)\in(0,1)$ such that for every $\delta\in(0,\delta_1)$ the following holds. Let $V$ be a finite family of balls with radii contained in the interval $[1,m]$. 
Assume that $y_1, y_2\in\R^d$ and $\rho_1,\rho_2\in[1,m]$ are such that $K_1 = B_{\rho_1}(y_1)$ and $K_2=B_{\rho_2}(y_2)$ do not intersect any balls from $V$. Furthermore, suppose that $y_1\not\in K_2$ and that $K_1\cap K_2\ne\es$. Finally, choose $y_2'\in [y_1, y_2]$ such that $|y_2-y_2'|=\delta$. If $\rho'\in(\delta^2 + \rho_2 - \delta,2\delta^2 + \rho_2 - \delta)$ and $y'\in B_{\delta^2}(y_2')$, then $B_{\rho_1}(y_1)\cap B_{\rho'}(y')\ne\es$ and $B_{\rho'}(y')$ does not intersect any ball in $V$.
\end{lemma}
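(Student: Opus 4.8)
The plan is to prove Lemma~\ref{elGeomLem-1} by a direct continuity/perturbation argument. The key point is that the conclusion involves finitely many closed-ball intersection conditions, and each of them is either robustly \emph{open} (an intersection of the form $B_{\rho_1}(y_1) \cap B_{\rho'}(y') \ne \es$ with interiors meeting, which persists under small perturbations) or robustly \emph{closed} with a quantitative gap (non-intersection of $B_{\rho'}(y')$ with balls in $V$, which we need to guarantee by moving $y_2'$ \emph{toward} $y_1$ and simultaneously \emph{shrinking} the radius). First I would record the two geometric facts that drive everything: (i) if two closed balls $B_{\rho_1}(y_1)$ and $B_{\rho_2}(y_2)$ intersect but $y_1 \notin B_{\rho_2}(y_2)$, then $|y_1 - y_2| > \rho_2$, hence $|y_1 - y_2| \in (\rho_2, \rho_1 + \rho_2]$; (ii) sliding the center from $y_2$ to a point $y_2' \in [y_1,y_2]$ with $|y_2 - y_2'| = \delta$ strictly decreases the distance to $y_1$ by exactly $\delta$, i.e.\ $|y_1 - y_2'| = |y_1 - y_2| - \delta$.

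The main step is then to verify the two conclusions for the perturbed ball $B_{\rho'}(y')$ with $\rho' \in (\delta^2 + \rho_2 - \delta, 2\delta^2 + \rho_2 - \delta)$ and $y' \in B_{\delta^2}(y_2')$. For the \emph{intersection} with $B_{\rho_1}(y_1)$: using $|y_1 - y'| \le |y_1 - y_2'| + \delta^2 = |y_1 - y_2| - \delta + \delta^2$ and $|y_1 - y_2| \le \rho_1 + \rho_2$, one gets $|y_1 - y'| \le \rho_1 + \rho_2 - \delta + \delta^2 < \rho_1 + \rho'$ provided $\rho' > \rho_2 - \delta + \delta^2$, which holds by hypothesis; so the balls meet. (One should double-check that $\rho'$ stays in $[1,m]$ after possibly shrinking $\delta_1$, or else argue the conclusion only uses metric inequalities and does not require $\rho' \in [1,m]$ — I would phrase it with the latter in mind but keep $\rho' \ge 1 - o(1)$ close to the valid range.) For the \emph{non-intersection} with any $V$-ball $B_s(v)$, $s \in [1,m]$: since $B_{\rho_2}(y_2)$ is disjoint from $B_s(v)$ we have $|y_2 - v| > \rho_2 + s$, and by choosing $\delta_1$ small enough — here quantified against $m$ and the dimension $d$, but crucially \emph{not} against $V$ or the particular configuration, since the gap $|y_2 - v| - (\rho_2 + s)$ could a priori be tiny — I would instead exploit that we are shrinking: $|y' - v| \ge |y_2 - v| - |y_2 - y_2'| - |y_2' - y'| \ge |y_2 - v| - \delta - \delta^2$, while the new combined radius is $\rho' + s < 2\delta^2 + \rho_2 - \delta + s < \rho_2 + s - \delta + 2\delta^2$. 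Hence non-intersection requires $|y_2 - v| - \delta - \delta^2 \ge \rho_2 + s - \delta + 2\delta^2$, i.e.\ $|y_2 - v| \ge \rho_2 + s + 3\delta^2$, which is \emph{not} automatic.

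This is where the real content sits, and I expect it to be the main obstacle: the naive estimate above is not quite enough, because the gap between $B_{\rho_2}(y_2)$ and a $V$-ball can be arbitrarily small. The fix is to observe that we have a whole annular \emph{window} of allowed radii $\rho'$ of width $\delta^2$ and we sled the center along $[y_1,y_2]$ by a \emph{fixed} amount $\delta$ which dominates the $\delta^2$-scale perturbations — so the correct bookkeeping is: the distance to any $V$-ball decreases by at most $\delta + \delta^2$ (from moving and jittering the center), but the \emph{radius} decreases by at least $\delta - 2\delta^2$ (since $\rho' < \rho_2 - \delta + 2\delta^2$), and for $\delta < \delta_1(d,m)$ small the radius loss $\delta - 2\delta^2$ strictly exceeds the distance loss contribution $\delta^2$ coming from the jitter, net of the $\delta$ from sliding which is exactly compensated by the simultaneous distance decrease to $y_1$ not being relevant here. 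Concretely: $|y' - v| - (\rho' + s) \ge \big(|y_2 - v| - \delta - \delta^2\big) - \big(\rho_2 - \delta + 2\delta^2 + s\big) = \big(|y_2 - v| - \rho_2 - s\big) - 3\delta^2 > 0 - 3\delta^2$ — so one still needs the original gap to beat $3\delta^2$. Therefore the honest argument must instead \emph{not} slide toward $y_1$ at the $V$-balls' expense but rather note that the balls of $V$ are at positive distance from $B_{\rho_2}(y_2)$ and also, by the same disjointness hypothesis applied with $K_1$, from $B_{\rho_1}(y_1)$; since $y'$ lies within $\delta + \delta^2$ of the segment point $y_2'$ which is itself \emph{on} $[y_1,y_2]$, and the segment $[y_1,y_2]$ — wait, $y_1,y_2$ are centers, not in the balls. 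The clean resolution, and the one I would write up, is: choose $\delta_1$ so small that $3\delta_1^2 < $ the \emph{minimum possible nonzero gap}, which does not exist uniformly — hence one must accept that the lemma as I've set it up needs the quantifier order ``$\exists \delta_1$ depending only on $d,m$, then $\forall \delta < \delta_1$, then $\forall V$'' to genuinely work, meaning the intended proof must avoid the gap-to-a-$V$-ball issue entirely. Rereading the statement, it does claim exactly that order, so the resolution has to be structural: moving $y_2'$ strictly into $[y_1,y_2]$ and shrinking $\rho'$ below $\rho_2$ means $B_{\rho'}(y') \subset B_{\rho_2 + 2\delta^2}(y_2)$-ish is false, but $B_{\rho'}(y')$ \emph{is} contained in a $(\delta + \delta^2)$-neighborhood of $B_{\rho_2 - \delta + 2\delta^2}(y_2')$, and since $y_2' \in [y_1, y_2]$ with $B_{\rho_1}(y_1), B_{\rho_2}(y_2)$ both disjoint from $V$, a short convexity argument shows the convex hull $B_{\rho_1}(y_1) \cup B_{\rho_2}(y_2)$'s ``tube'' along $[y_1,y_2]$ has radius $\ge \min(\rho_1,\rho_2) \ge 1$ at $y_2'$, and for $\delta_1$ small $\rho' + (\delta + \delta^2) < 1 \le$ that tube radius, placing $B_{\rho'}(y')$ inside a region disjoint from $V$. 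Fleshing out this tube/convex-hull containment — that the Minkowski-convex hull of two disjoint-from-$V$ balls of radius $\ge 1$ stays disjoint from $V$ along the core segment, with room to spare — is the one genuinely nontrivial geometric lemma to nail down, and it is where I would spend the bulk of the write-up; the rest is the elementary triangle-inequality bookkeeping sketched above.
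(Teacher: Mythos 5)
Your treatment of the first conclusion ($B_{\rho_1}(y_1)\cap B_{\rho'}(y')\ne\es$) is correct and matches the paper's. You also correctly diagnose the crux of the second conclusion: the gap between $K_2$ and a ball of $V$ can be arbitrarily small, so no bookkeeping of the form ``the centre moves by at most $\delta+\delta^2$ while the original gap beats $3\delta^2$'' can succeed with $\delta_1$ depending only on $d$ and $m$. However, the fix you settle on --- containing $B_{\rho'}(y')$ in a ``tube'' inside the convex hull of $K_1\cup K_2$ --- fails for two separate reasons. First, the containment you invoke requires $|y'-y_2'|+\rho'\le\min(\rho_1,\rho_2)$, i.e.\ $\rho'+\delta^2\le 1$, which is false whenever $\rho_2>1$, since $\rho'>\rho_2-\delta+\delta^2$. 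Second, even granting a containment in $\operatorname{conv}(K_1\cup K_2)$, that convex hull need not be disjoint from $V$: a ball of $V$ tangent to $K_2$ from the side can protrude into the part of $\operatorname{conv}(K_1\cup K_2)$ lying outside $K_1\cup K_2$, so disjointness of $V$ from the two balls does not propagate to their convex hull.

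The mechanism the paper actually uses is an angle bound rather than a containment. Write $a=|y_2-\overline y|$, $c=|y_1-y_2|$, and let $u$ be the cosine of the angle at $y_2$ between the directions to $y_1$ and to the centre $\overline y$ of a $V$-ball $B_{\overline\rho}(\overline y)$. The hypotheses $|y_1-\overline y|>\rho_1+\overline\rho$ (disjointness of $K_1$ from $V$ --- this is where that assumption is indispensable), $c>\rho_2$ (i.e.\ $y_1\notin K_2$) and $c\le\rho_1+\rho_2$, together with all radii lying in $[1,m]$, force $u\le u_1(d,m)<1$ uniformly: reducing to the extremal case $a=\rho_2+\overline\rho$, the condition $u<1$ amounts to the strict inequality $|\rho_2+\overline\rho-c|<\rho_1+\overline\rho$, and compactness of the parameter ranges upgrades this to a uniform bound $u_1<1$. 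Consequently $|y_2'-\overline y|^2=a^2+\delta^2-2a\delta u$ gives $|y_2'-\overline y|\ge a-\delta u_1-O(\delta^2)$, while the radius drops by at least $\delta-2\delta^2$; the net margin $\delta(1-u_1)-O(\delta^2)$ is strictly positive once $\delta<\delta_1(d,m)$ and absorbs the additional $\delta^2$ jitter of $y'$ around $y_2'$. Your instinct that the simultaneous disjointness of \emph{both} $K_1$ and $K_2$ from $V$ is what saves the day is the right one, but without this quantitative angle bound the argument does not close.
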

The configuration in Lemma~\ref{elGeomLem-1} is illustrated in Figure~\ref{conf-1Fig}, left.
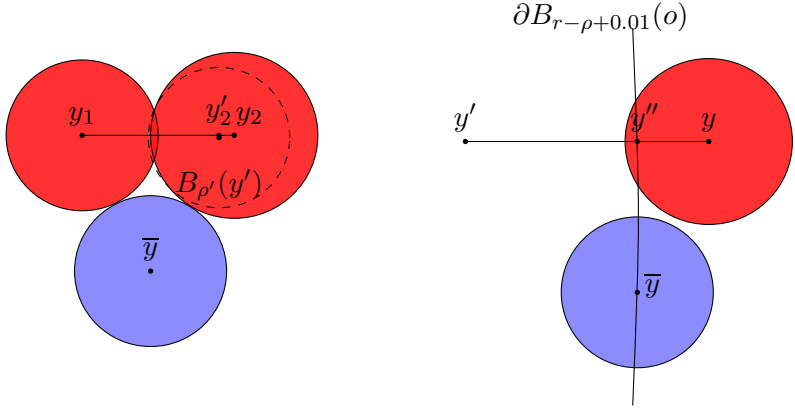
\begin{figure}[!htpb]
\centering
\begin{subfigure}{0.45\textwidth}
\centering
\begin{tikzpicture}[scale=1.0]
\fill[red!90!white,opacity=0.9] (0.0,1) circle (1.1);
\fill[red!90!white,opacity=0.9] (-2.0,1) circle (1.0);
\fill[blue!50!white,opacity=0.9] (-1.1,-0.8) circle (1.0);
\fill (0.0,1) circle (1pt);
\fill (-1.1,-0.8) circle (1pt);
\fill (-2.0,1) circle (1pt);
\fill (-0.2,1) circle (1pt);
\fill (-0.2,0.97) circle (1pt);
\draw (0.0,1)--(-2.0,1);

\draw (0.0,1) circle (1.1);
\draw (-2.0,1) circle (1.0);
\draw (-1.1,-0.8) circle (1.0);
\draw[dashed] (-0.2,0.97) circle (0.93);

\coordinate[label=90:$y_1$] (u) at (-2.0,1);
\coordinate[label=90:$y_2$] (u) at (0.2,1);
\coordinate[label=90:$y_2'$] (u) at (-0.2,1);
\coordinate[label=90:$B_{\rho'}(y')$] (u) at (-0.2,0);
\coordinate[label=90:$\overline{y}$] (u) at (-1.1,-0.8);
\end{tikzpicture}
\end{subfigure}
\begin{subfigure}{0.45\textwidth}
\begin{tikzpicture}[scale=1.0,rotate=-90]
\fill[red!90!white,opacity=0.9] (0.0,1) circle (1.1);
\fill (0.0,1) circle (1pt);
\coordinate[label=90:$y$] (u) at (0.0,1.0);

\draw (0.0,1)--(0.0,-2.2);

\fill (0.0,-2.2) circle (1pt);
\coordinate[label=90:$y'$] (u) at (0.0,-2.2);

\fill (0.0,0.06) circle (1pt);
\coordinate[label=90:$y''$] (u) at (0.0,0.16);

\fill[blue!50!white,opacity=0.9] (2.0,0.06) circle (1.0);
\draw (0.0,1) circle (1.1);
\draw (2.0,0.06) circle (1.0);
\fill (2.0,0.06) circle (1pt);
\coordinate[label=90:$\overline{y}$] (u) at (2.2,0.26);
\draw (-1.5,0) .. controls (1.0,0.1) ..(3.5,0);

\coordinate[label=90:$\partial B_{r-\rho+0.01}(o)$] (u) at (-1.31,-0.42);

\end{tikzpicture}

\end{subfigure}
\caption{Configurations in Lemmas~\ref{elGeomLem-1} and~\ref{elGeomLem0}; blue balls are elements of $V$}
\label{conf-1Fig}
\end{figure}

\begin{proof}
First, we note that $B_{\rho_1}(y_1)\cap B_{\rho'}(y')\ne\es$, since
$$|y_1-y'|\le |y_1-y_2'|+|y_2'-y'|\le\rho_1+\rho_2-\delta+\delta^2 \le \rho_1 + \rho'.$$

    Now, let $B_{\overline{\rho}}(\overline{y})$ be any member of the family $V$. We need to show that $|y_2'-\overline{y}|>\rho_2+\overline{\rho}-\delta+3\delta^2$ provided that $\delta<\delta_1$, where $\delta_1$ is a suitable constant only depending on $d$ and $m$. Put $a=|y_2-\overline{y}|$, $b=|y_1-\overline{y}|$, $c=|y_1-y_2|$, and let 
    $$u = \frac{\langle \overline{y} -y_2, y_1 - y_2\rangle}{ac},$$ 
    where $\langle \cdot, \cdot \rangle$ denotes the standard scalar product in $\R^d$.
    Then,
$$|y_2'-\overline{y}|^2=a^2+\delta^2-2a\delta u,$$
so that it suffices to show 
\begin{align}
\label{aa11eq}
a^2+\delta^2-2a\delta u>(\rho_2+\overline{\rho}-\delta+3\delta^2)^2.
\end{align}
 Note that the left-hand side is increasing in $a$ for $a\ge \delta u$, so that it suffices to prove the claim if $a=\rho_2+\overline{\rho}$. Then,~\eqref{aa11eq} is equivalent to
$$\delta(1-3\delta)>\frac{2a}{a+\rho_2+\overline{\rho}-\delta+3\delta^2}\delta(u - \delta/(2a)).$$
First, the right-hand side of this inequality is at most 
$$\frac{2\delta u}{2-\delta/a}.$$
Additionally, since $\rho_1,\rho_2$ and $\overline{\rho}$ are contained in the interval $[1,m]$, we see that $u$ is bounded above by some $u_1 < 1$ that depends only on $m$ and $d$. To conclude the proof, we choose $\delta_1(d,m) > 0$ such that 
$1-3\delta>\frac{2}{2 - \delta / a} u_1$
whenever $\delta < \delta_1(d,m)$.
\end{proof}

Next, we prove an auxiliary result that will be useful for iteratively creating two families of balls such that no ball from one family intersects a ball from the other set.

\begin{lemma}
\label{elGeomLem0}
There exists $r_1=r_1(d,m)$ with the following property. Let $V$ be a finite family of balls with radii contained in the interval $[1, m]$. Assume that $y\in\R^d$ and $\rho\in[1,m]$ are such that $r = |y| \ge r_1$ and no ball from $V$ has its center in $B_{r - \rho+0.01}(o)$. Furthermore, choose $y' \in [o, y]$ such that $|y' - y| = \rho + 1$ and assume that $B_\rho(y)$ does not intersect any ball in $V$. Then, $y'$ has distance at least $1 + 0.001m^{-1}$ from any ball in $V$.
\end{lemma}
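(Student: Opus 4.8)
The plan is to set up coordinates so that $y$ lies on a ray from the origin and to estimate the distance from $y'$ to an arbitrary ball $B_{\overline\rho}(\overline y)\in V$ using the law of cosines, exactly as in the proof of Lemma~\ref{elGeomLem-1}. First I would write $a=|y-\overline y|$, $b=|y|=r$, $c=|\overline y|$ and let $u$ denote the cosine of the angle at $y$ in the triangle $(o,y,\overline y)$, so that $|y'-\overline y|^2 = a^2 + (\rho+1)^2 - 2a(\rho+1)u$ since $y'$ lies on the segment $[o,y]$ at distance $\rho+1$ from $y$. The hypotheses give two pieces of information: $B_\rho(y)$ does not meet $B_{\overline\rho}(\overline y)$, hence $a\ge\rho+\overline\rho$; and the center $\overline y$ lies outside $B_{r-\rho+0.01}(o)$, hence $c\ge r-\rho+0.01$. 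The goal is the bound $|y'-\overline y|\ge\overline\rho + 1 + 0.001 m^{-1}$.

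Next I would argue that it suffices to treat the worst case $a=\rho+\overline\rho$, since the right-hand side $a^2+(\rho+1)^2-2a(\rho+1)u$ is increasing in $a$ for $a\ge(\rho+1)u$, and $u<1$ together with $a\ge\rho+\overline\rho\ge\rho+1$ makes this monotonicity applicable (using $\overline\rho\ge1$). With $a$ fixed at $\rho+\overline\rho$, the law of cosines in the triangle $(o,y,\overline y)$ together with $c\ge r-\rho+0.01$ forces $u$ to be bounded \emph{away from} $1$: intuitively, if $\overline y$ is forced to stay a distance roughly $r-\rho$ from the origin while being only a distance $\rho+\overline\rho\le\rho+m$ from $y$ (which is at distance $r$ from the origin), then the angle at $y$ cannot be too small. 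Quantitatively, $c^2 = b^2 + a^2 - 2ab\,u$ gives $u = \frac{b^2+a^2-c^2}{2ab}$, and plugging in $a=\rho+\overline\rho$, $b=r$, $c\ge r-\rho+0.01$ yields an upper bound on $u$ of the form $u\le \frac{(\rho+\overline\rho)^2 + 2r(\rho-0.01) - (\rho-0.01)^2}{2r(\rho+\overline\rho)}$, which for $r$ large tends to $\frac{\rho-0.01}{\rho+\overline\rho}\le \frac{\rho}{\rho+1}\le\frac{m}{m+1}$; one then absorbs the $O(1/r)$ correction by taking $r\ge r_1(d,m)$ large enough so that $u\le u_0$ for some constant $u_0<\frac{m}{m+1}+\epsilon$ strictly below $1$, with explicit room to spare.

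Finally I would substitute this bound back: with $a=\rho+\overline\rho$ and $u\le u_0$,
$$|y'-\overline y|^2 \ge (\rho+\overline\rho)^2 + (\rho+1)^2 - 2(\rho+\overline\rho)(\rho+1)u_0,$$
and the claim reduces to checking $(\rho+\overline\rho)^2 + (\rho+1)^2 - 2(\rho+\overline\rho)(\rho+1)u_0 \ge (\overline\rho+1+0.001m^{-1})^2$ uniformly over $\rho,\overline\rho\in[1,m]$. Since the left-hand side at $u_0=\frac{m}{m+1}$ already exceeds $(\overline\rho+1)^2$ by a margin bounded below by a positive constant depending only on $d$ and $m$ — this is an elementary but slightly tedious optimization over the compact set $[1,m]^2$ — choosing $u_0$ close enough to that value (hence $r_1$ large enough) preserves a strict inequality with the extra additive slack $0.001m^{-1}$. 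I expect the main obstacle to be the bookkeeping in this last uniform inequality: one must verify that the gap between $(\rho+\overline\rho)^2+(\rho+1)^2 - 2(\rho+\overline\rho)(\rho+1)\tfrac{m}{m+1}$ and $(\overline\rho+1)^2$ is bounded below by a positive constant over all of $[1,m]^2$, and then track how much this degrades under the $O(1/r)$ perturbation of $u_0$, so that $r_1(d,m)$ can be chosen to make everything close. The geometric monotonicity step (reducing to $a=\rho+\overline\rho$) and the law-of-cosines manipulation are routine once the configuration is drawn, cf. Figure~\ref{conf-1Fig}, right.
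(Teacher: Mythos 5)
Your overall route — law of cosines at $y$, reduction to the extremal case $a=|y-\overline{y}|=\rho+\overline{\rho}$, and a bound on $u=\cos(\angle\, o\,y\,\overline{y})$ coming from $|\overline{y}|\ge r-\rho+0.01$ — is viable and genuinely different from the paper's proof (which normalizes to $\rho=1$ and $|\overline{y}|=r-0.99$ by monotone deformations and then runs a half-angle computation). But the final step as you state it is false. You relax the bound $u\le\frac{\rho-0.01}{\rho+\overline{\rho}}+O(1/r)$ to the uniform bound $u\le\frac{m}{m+1}$ and then claim that
$$(\rho+\overline{\rho})^2+(\rho+1)^2-2(\rho+\overline{\rho})(\rho+1)\tfrac{m}{m+1}\;\ge\;(\overline{\rho}+1)^2+c_0$$
for some constant $c_0>0$ uniformly on $[1,m]^2$. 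Take $\rho=\overline{\rho}=1$: the left-hand side is $8-\tfrac{8m}{m+1}=\tfrac{8}{m+1}$, which is strictly less than $(\overline{\rho}+1)^2=4$ for every $m>1$ (e.g.\ $8/2.1\approx 3.81$ for $m=1.1$). So the "elementary optimization over $[1,m]^2$" you defer to cannot succeed: the uniform bound $m/(m+1)$ throws away exactly the cancellation that makes the lemma true.

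The repair is to keep the $(\rho,\overline{\rho})$-dependent bound. Substituting $u\le\frac{\rho-0.01}{\rho+\overline{\rho}}+O(1/r)$ into $g=(\rho+\overline{\rho})^2+(\rho+1)^2-2(\rho+\overline{\rho})(\rho+1)u$ gives $g\ge(\rho+\overline{\rho})^2+(\rho+1)(1.02-\rho)-O(1/r)$, and
$$(\rho+\overline{\rho})^2+(\rho+1)(1.02-\rho)-(\overline{\rho}+1)^2=(\rho+2\overline{\rho}+1)(\rho-1)+(\rho+1)(1.02-\rho),$$
whose $\rho$-derivative is $2\overline{\rho}+0.02>0$, so it is minimized at $\rho=1$ with value $0.04$. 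Since $(\overline{\rho}+1+0.001m^{-1})^2-(\overline{\rho}+1)^2\le 2(m+1)\cdot 0.001m^{-1}+10^{-6}\le 0.0041$, a uniform margin of about $0.035$ remains, which absorbs the $O(1/r)$ correction once $r_1(d,m)$ is large. One further point to make precise: shrinking $a$ to $\rho+\overline{\rho}$ with $u$ held fixed moves $\overline{y}$ and may violate $|\overline{y}|\ge r-\rho+0.01$, so the two reductions are not automatically compatible as ordered in your sketch. The clean argument is to minimize $g(a,u)$ jointly over the feasible set: for fixed $a$ the minimum over $u$ is at $u=u_{\max}(a)=\frac{r^2+a^2-(r-\rho+0.01)^2}{2ra}$ (the case $u_{\max}(a)\ge1$ being trivial because then $|y'-\overline{y}|\ge a-(\rho+1)$ is of order $r$), and $g(a,u_{\max}(a))$ has $a$-derivative $2a(1-\tfrac{\rho+1}{r})>0$, so the minimum over $a\ge\rho+\overline{\rho}$ is indeed at $a=\rho+\overline{\rho}$, $|\overline{y}|=r-\rho+0.01$, which is the configuration analyzed above.
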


The statement of Lemma~\ref{elGeomLem0} is illustrated in Figure~\ref{conf-1Fig}, right.

\begin{proof}
For simplicity, we assume that $y$ lies on the first coordinate axis, i.e., that $y=re_1$. Next, by simultaneously moving $y$ closer to the origin and decreasing $\rho$, we may reduce the general case to the special one where $\rho = 1$. Now, let $B_{\overline{\rho}}(\overline{y})$ be an arbitrary ball from $V$. Again, by simultaneously moving $\overline{y}$ closer to $y'$ and decreasing $\overline{\rho}$, we may assume that either $\overline{\rho} = 1$ or that $|\overline{y}| = r - 0.99$. 

We start by discussing the first case. If $\langle\overline{y}, e_1\rangle\ge \langle y, e_1\rangle$, then $|\overline{y}-y'|\le 2+0.001m^{-1}$ would enforce that $B_{1}(\overline{y})$ and $B_{1}(y)$ have non-empty intersection. On the other hand, if $\langle\overline{y}, e_1\rangle\le\langle y, e_1\rangle$, then the problem can be reduced to the case where $|\overline{y}| = r - 0.99$, and this case will be considered next.

 By moving $\overline{y}$ on the sphere $\partial B_{r-0.99}(o)$, we may assume that $|y-\overline{y}|=\overline{\rho}+1$. We want to show that $a=|\overline{y}-y'|\ge\overline{\rho}+1+0.001m^{-1}$ holds for a suitable choice of $r_1$. In order to achieve this goal, we first choose $y''\in[o,y]$ such that $|y''|=|\overline{y}|$. Then, we consider the triangle $\Delta y''y'\overline{y}$ and write 
    $$ \alpha = \arccos\frac{\langle \overline{y} ,y''\rangle}{|\overline{y}||y''|},$$ 
so that 
\begin{align*}
a^2 = 1.01^2 + 4|\overline{y}|^2(\sin\tfrac\alpha2)^2 - 4|\overline{y}|(\sin\tfrac\alpha2)1.01(\sin\tfrac\alpha2) = 1.01^2+4|\overline{y}|(\sin\tfrac\alpha2)^2(|\overline{y}| - 1.01).
\end{align*}
 Moreover, by considering the triangle $\Delta yo\overline{y}$, 
\begin{align*}
(\overline{\rho} + 1)^2 &= |\overline{y}|^2 + (|\overline{y}|+0.99)^2 - 2|\overline{y}|(|\overline{y}|+0.99)(1-2(\sin\tfrac{\alpha}{2})^2)\\
&= 0.99^2 + 4|\overline{y}|(\sin\tfrac{\alpha}{2})^2(|\overline{y}| + 0.99).
\end{align*}
Hence, 
$$a^2 = 1.01^2 + \sigma(|\overline{y}|)((\overline{\rho} + 1)^2 - 0.99^2) = 1.01^2 + \sigma(|\overline{y}|)(\overline{\rho}^2 + 2\overline{\rho} + 0.0199).$$
where 
$$\sigma(|\overline{y}|)=\frac{|\overline{y}|-1.01}{|\overline{y}| + 0.99}.$$
In particular, 
\begin{align*}
a^2-(\overline{\rho}+1+0.001m^{-1})^2&=(2.01+0.001m^{-1})(0.01 - 0.001m^{-1})\\
&\phantom{=}+(\sigma(|\overline{y}|)-1)(\overline{\rho}^2+2\overline{\rho}\cdot(1+0.001m^{-1}))\\
&\phantom{=}+\sigma(|\overline{y}|)(0.0199-0.002\overline{\rho}m^{-1}). 
\end{align*}
Since the latter expression is strictly positive, we conclude the proof.
\end{proof}
Proving Lemma~\ref{diffIneqProp} is difficult since we need to produce two `arms', i.e., two mutually non-overlapping families of balls. As in~\cite[Lemma 3.2]{strictIneq}, the construction of these non-overlapping grains is based on delicate elementary geometric arguments. For that purpose, we introduce a geometric auxiliary construction that will be used in Lemmas~\ref{elGeomLem1} and~\ref{elGeomLem2}. Let $x\in\R^d$ and $V$, $T$ be finite and mutually non-overlapping subsets of spherical grains whose radii are contained in the interval $[1,m]$. Now, let $K=B_{\rho_y}(y)$, $L=B_{\rho_z}(z)$ denote the balls of $V$ and $T$ whose centers are closest to $x$ and put $r=|x-y|$. We assume that there are no grains from $T\setminus\{L\}$ that are centered inside $B_r(x)$. In order to simplify the presentation, we assume that $x=o$ and $y=re_1=(r,0,\ldots,0)$.

The general goal of the following auxiliary results is to enlarge both $V$ and $T$ by one grain such that i) the new grains are closer to the origin and ii) grains from $V$ still do not overlap with grains from $T$. First, we consider the situation where $|z|>r$ and there exists some grain $L_0=B_{\rho_{z_0}}(z_0)$ in $T$ such that $|z_0|<r+\rho_{z_0}-0.01$.

\begin{lemma}
\label{elGeomLem1}
There exists $\delta_2>0$ such that for every $\delta\in(0,\delta_2)$ there exists $r_2(\delta)>0$ with the following properties. Let $V$, $T$ and $y$ be as above. 
Assume that  $|z|>r\ge r_2$ and that there exists some grain $L_0=B_{\rho_{z_0}}(z_0)$ with $|z_0|<r+\rho_{z_0}-0.01$. Moreover, choose $y'\in[o,y]$ and $z'\in[o,z_0]$ such that $|y'-y|=\rho_{y}+1$ and $|z'-z_0|=\rho_{z_0}+1$. Finally, put $K'=B_{1+\delta}(y')$. Then $K\cap K'\ne\es$, $K'$ does not intersect any grain from $T$, and there exists $z^*\in B_{3\delta}(z')$ such that $L^*=B_{1+\delta}(z^*)$ has the following properties:
\begin{enumerate}
\item $K'\cap L^*=\es$, 
\item $L_0\cap L^*\ne\es$, 
\item  $L^*$ does not intersect any grain from $V$.
\end{enumerate}
\end{lemma}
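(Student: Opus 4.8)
### Proof Proposal for Lemma~\ref{elGeomLem1}

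The plan is to set up a coordinate system adapted to the geometry, construct the perturbed ball $K'$ along the segment $[o,y]$, and then find the desired center $z^*$ inside the small ball $B_{3\delta}(z')$ by a continuity/volume argument. First I would fix $x = o$, $y = re_1$ as in the hypotheses, and define $y' \in [o,y]$ with $|y' - y| = \rho_y + 1$, so that $y'$ is pulled strictly toward the origin relative to $y$. The claim $K \cap K' \ne \es$ is immediate from the triangle inequality: $|y - y'| = \rho_y + 1 \le \rho_y + (1+\delta)$, so the two balls overlap. That $K' = B_{1+\delta}(y')$ misses every grain from $T$ is exactly the content of Lemma~\ref{elGeomLem0} applied with $\rho = \rho_y$ and $V$ replaced by $T$ (we may do this because no grain of $T \setminus \{L\}$ is centered in $B_r(o)$, and $L$ itself is at distance $\ge r$ from $o$, so $T$ has no center inside $B_{r - \rho_y + 0.01}(o)$); Lemma~\ref{elGeomLem0} then gives that $y'$ has distance at least $1 + 0.001m^{-1}$ from every ball of $T$, and since $1 + \delta < 1 + 0.001m^{-1}$ for $\delta$ small, $K'$ is disjoint from all of $T$. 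This forces $\delta_2 \le 0.001 m^{-1}$ and $r_2 \ge r_1(d,m)$.

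Next I would handle the ball $L^*$. The natural candidate center is $z' \in [o, z_0]$ with $|z' - z_0| = \rho_{z_0} + 1$; by the same application of Lemma~\ref{elGeomLem0} (this time with the roles of $V$ and $T$ swapped, using that $V$ has no center inside $B_{|z_0| - \rho_{z_0} + 0.01}(o) \subset B_r(o)$ — here we use $|z_0| < r + \rho_{z_0} - 0.01$), the point $z'$ has distance $\ge 1 + 0.001m^{-1}$ from every grain of $V$, and $L_0 \cap B_{1+\delta}(z') \ne \es$ since $|z_0 - z'| = \rho_{z_0} + 1$. So properties (ii) and (iii) already hold for the naive choice $z^* = z'$. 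The only issue is property (i): $K' \cap B_{1+\delta}(z')$ might be nonempty. This is where the freedom to move $z^*$ within $B_{3\delta}(z')$ is needed. The key geometric input is that $y$ and $z_0$ point in "different enough" directions as seen from $o$: both $y'$ and $z'$ lie at distance roughly $r - O(1)$ from the origin (since $|y| = r$ and $|z_0| \approx r$, with the offsets $\rho_y + 1$, $\rho_{z_0}+1$ bounded by $m+1$), but $z$ — hence, after a short argument, $z_0$ — is forced to sit at angular distance bounded below from $y$ because $|z| > r$ while both balls $K, L$ avoid each other and have bounded radii. I would quantify this: there is an angle $\alpha_0 = \alpha_0(d,m) > 0$ such that the angle between $y'$ and $z'$ (viewed from $o$) is at least $\alpha_0$, whence for $r \ge r_2(\delta)$ large enough the straight-line distance $|y' - z'|$ exceeds $2(1+\delta) + $ (a margin of order $\delta$), giving room to push $z^*$ in the direction away from $y'$ by up to $3\delta$ while keeping $L^* = B_{1+\delta}(z^*)$ disjoint from $K'$; simultaneously the $3\delta$-perturbation is small enough that (ii) and (iii) survive (here we again shrink $\delta_2$ and use the $0.001m^{-1}$ slack from Lemma~\ref{elGeomLem0}, plus the fact that $L_0$ has radius $\ge 1$ so a $3\delta$ move cannot separate it from $L^*$).

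The main obstacle I anticipate is establishing the lower bound $\alpha_0(d,m)$ on the angle between the directions of $y$ (equivalently $y'$) and $z_0$ (equivalently $z'$) from the origin. The hypothesis $|z| > r$ together with $K \cap L \ne \es$ (which holds because $L$ is the $T$-grain closest to $x = o$ and $K$ the closest $V$-grain, with $V, T$ non-overlapping — so actually $K \cap L = \es$, and one must instead extract the angular separation from $|z| > r = |y|$ and the bound $|z - z_0| \le$ something, or directly from $|z_0| < r + \rho_{z_0} - 0.01$ combined with $z_0$'s position relative to $z$) gives only a crude bound, and converting it into an $r$-uniform angular gap requires care about the regime where $z_0$ is nearly antipodal versus nearly aligned with $y$. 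In the nearly-aligned case one would need a separate argument showing $K'$ and $L^*$ can still be separated because they are then pulled toward $o$ along almost the same ray but $L_0$ being centered strictly inside $B_r(o)$ (by $|z_0| < r + \rho_{z_0} - 0.01 \le r + m - 0.01$, which is not obviously inside — this needs rechecking) forces $z'$ to sit closer to $o$ than $y'$ by a definite amount. I would expect this case analysis to be the bulk of the work, mirroring closely the corresponding estimate in~\cite[Lemma 3.2]{strictIneq}; as in the rest of this appendix, I would present the construction and the reduction to a scalar inequality, then invoke the choice of constants $\delta_2, r_2(\delta)$ to close the argument, deferring the routine trigonometric verification.
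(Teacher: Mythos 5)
There is a genuine gap at the step you yourself flag as the ``main obstacle'': the uniform angular separation $\alpha_0(d,m)>0$ between the directions of $y$ and $z_0$ as seen from $o$ does not exist. The only separation available is the non-overlap of $K=B_{\rho_y}(y)$ and $L_0=B_{\rho_{z_0}}(z_0)$ (both lie in the mutually non-overlapping families $V$ and $T$), which gives $|z_0-y|\ge\rho_y+\rho_{z_0}$; with $|y|=r$ and $r\le|z_0|<r+\rho_{z_0}-0.01$, this permits an angle as small as $O(1/r)$. Consequently your claim that $|y'-z'|$ exceeds $2(1+\delta)$ plus a margin for $r$ large is false in precisely the near-aligned configuration, which is the essential case of the lemma. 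In that regime one can only prove $|z'-y'|\ge 2-\delta/2$: writing $v=(z_0-y)/|z_0-y|$ and decomposing $z'-y'=(z_0-y)+(\rho_y-\rho_{z_0})e_1+(z'-z_0+|z'-z_0|e_1)$, the paper bounds $\langle z'-y',v\rangle$ from below using $|z_0-y|\ge\rho_y+\rho_{z_0}$, absorbs the last term into $-\delta/2$ for $r$ large, and reduces (by shrinking $L_0$ while keeping $z'$ fixed) to the cases $\rho_{z_0}=1$ or $|z_0|=|y|$, where $\rho_y+\rho_{z_0}+\langle(\rho_y-\rho_{z_0})e_1,v\rangle=(\rho_y-1)(\langle e_1,v\rangle+1)+2\ge2$. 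Since $2-\delta/2<2+2\delta$, the naive choice $z^*=z'$ genuinely fails, and the full $3\delta$ of freedom is then spent moving $z^*$ to $\partial B_{3\delta}(z')$ directly away from $y'$ (while keeping $|z^*-z_0|=\rho_{z_0}+1$) to reach $|z^*-y'|>2+2\delta$. Your proposal defers exactly this computation, so the heart of the proof is missing, and the route you sketch for it (an $r$-uniform angular gap) cannot be repaired as stated.

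The parts you do carry out are fine and match the paper: $K\cap K'\ne\es$ by the triangle inequality, and the two applications of Lemma~\ref{elGeomLem0} (to $y$ with obstacle family $T$, and to $z_0$ with obstacle family $V$) yielding the $1+0.001m^{-1}$ clearance that survives both the choice $\delta<\delta_2$ and the $3\delta$ perturbation of $z^*$, which settles ``$K'$ misses $T$'' and property (iii). Your aside questioning whether $K\cap L\ne\es$ is resolved by the standing assumption that $V$ and $T$ are mutually non-overlapping, and the worry about whether $L_0$ is ``inside $B_r(o)$'' is a red herring: what matters is only the projection estimate above, not the radial position of $z_0$ relative to $y'$.
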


The configuration in Lemma~\ref{elGeomLem1} is illustrated in Figure~\ref{conf1Fig}.

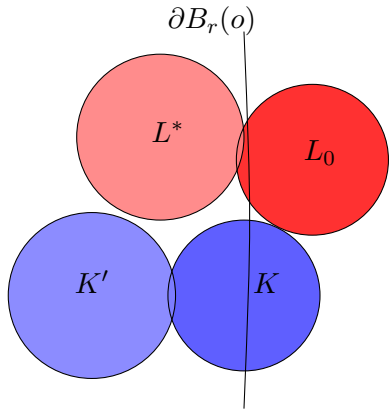
\begin{figure}[!htpb]
\centering
\begin{tikzpicture}[scale=1.0,rotate=-90]
\fill[red!90!white,opacity=0.9] (-1.8,0.9) circle (1.0);
\fill[blue!70!white,opacity=0.9] (0,0) circle (1.0);
\fill[red!50!white,opacity=0.9] (-2.1,-1.1) circle (1.1);
\fill[blue!50!white,opacity=0.9] (0.0,-2.0) circle (1.1);

\draw (-1.8,0.9) circle (1.0);
\draw (0,0) circle (1.0);
\draw (-2.1,-1.1) circle (1.1);
\draw (0.0,-2.0) circle (1.1);

\draw (-3.5,0) .. controls (-1.0,0.1) ..(1.5,0);

\coordinate[label=90:$K$] (u) at (0.1,0.3);
\coordinate[label=90:$K'$] (u) at (0.1,-2.0);
\coordinate[label=90:$L_0$] (u) at (-1.6,1.0);
\coordinate[label=90:$L^*$] (u) at (-1.9,-1.0);
\coordinate[label=90:$\partial B_r(o)$] (u) at (-3.31,-0.42);

\end{tikzpicture}
\caption{Configuration in Lemma~\ref{elGeomLem1}}
\label{conf1Fig}
\end{figure}
\begin{proof}
Without loss of generality, we may assume that $z_0$ lies in the plane spanned by $e_1$ and $e_2$.
 First, applying Lemma~\ref{elGeomLem0} to $y=y$ and $y=z_0$ gives that $y'$ and $z'$ have distance at least $1+0.001m^{-1}$ to any grain from $T$ and $V$, respectively. In particular, $K'$ does not intersect any grain from $T$. Moreover, if $4\delta<0.001m^{-1}$, then any $L^*$ of the form $B_{1+\delta}(z^*)$ with $z^*\in B_{3\delta}(z')$ does not intersect any elements from $V$.

It remains to show that $K'\cap L^*=\es$  and $L_0\cap L^*\ne\es$ hold for a suitable choice of $z^*$. Recalling that $y=re_1$ and writing $v=(z_0-y)/|z_0-y|$ we obtain that 
\begin{align}
\begin{split}
\label{egl1Eq}
|z'-y'|&\ge\langle z'-y',v\rangle=\langle z_0-y,v\rangle+\langle(\rho_y-\rho_{z_0})e_1,v\rangle+\langle z'-z_0+|z'-z_0|e_1,v\rangle \\
&\ge\rho_y+\rho_{z_0}+ \langle(\rho_y-\rho_{z_0})e_1,v\rangle-\delta/2.
\end{split}
\end{align}
provided that $r_2$ is sufficiently large. By moving $z_0$ closer to $o$ and shrinking $B_{\rho_{z_0}}(z_0)$ in such a way that $z'$ remains fixed, we may assume that either $\rho_{z_0} = 1$ or $|z_0|=|y|$.

If $\rho_{z_0} = 1$, then,
    $$\rho_y+\rho_{z_0}+\langle(\rho_y-\rho_{z_0})e_1,v\rangle = (\rho_y - 1)(\langle e_1, v\rangle + 1) + 2\ge2,$$
so that~\eqref{egl1Eq} implies that $|z'-y'|\ge2-\delta/2$. Therefore there exists $z^*\in \partial B_{3\delta}(z')$ such that $|z^*-z_0|=\rho_{z_0}+1$ and $|z^*-y'|> 2+2\delta$. In particular, $L_0\cap L^*\ne\es$ and $K'\cap L^*=\es$.

Finally, assume that $|z_0| = r$. Then~\eqref{egl1Eq} can be strengthened to give $|z'-y'|\ge|z_0-y|-\delta/2$ if $r_2$ is sufficiently large. Now, we conclude as above.%In particular, we get that $|z'-y'|\ge 2.0001$ provided that $\delta_2$ is sufficiently small. Therefore, $z^*=z'$ is an admissible choice.
%Finally, we assume that 
\end{proof}

The second auxiliary result deals with the case, where $|z|\in(r-0.01,r)$ or $\rho_z\le1.01$. Without loss of generality, we assume that $z$ is contained in the two-dimensional plane generated by $e_1$ and $e_2$ and that the $e_2$-coordinate of $z$ is non-negative.

\begin{lemma}
\label{elGeomLem2}
There exist $\delta_3>0$ and $r_3>0$ such that if  $r>r_3$, then the following holds.
Let $V$, $T$, $y$, $z$ be as above, define $y^*=y-(\rho_y+1)2^{-1/2}(e_1+e_2)$ and choose $z^*\in[o,z]$ such that $|z^*-z|=\rho_{z}+1$.
Assume that $L=B_{\rho_z}(z)$ intersects some other grain from $T$ and that any grain $\overline{L}=B_{\rho_{\overline{z}}}(\overline{z})\in T\setminus\{L\}$ satisfies $|\overline{z}|>r+\rho_{\overline{z}}-0.01$. Furthermore, assume that $|z|\in(r-0.01,r)$ or that $\rho_z\in(1,1.01)$. Then, $K^*\cap L^*=\es$, $K^*$ does not intersect any grain from $T$ and $L^*$ does not intersect any grain from $V$, where $K^*=B_{1+\delta_3}(y^*)$ and $L^*=B_{1+\delta_3}(z^*)$. 
\end{lemma}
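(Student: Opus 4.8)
The plan is to establish the three assertions --- $K^\ast\cap L^\ast=\es$, that $K^\ast$ meets no grain of $T$, and that $L^\ast$ meets no grain of $V$ --- by elementary distance estimates carried out in the two-dimensional plane $\mathrm{span}(e_1,e_2)$ containing $z$, postponing the choice of the constants $\delta_3$ and $r_3$ to the very end. Throughout I would write $t=(\rho_y+1)/\sqrt2\ge\sqrt2$ for the two equal coordinates of the displacement defining $y^\ast$, so that $y^\ast=(r-t)e_1-te_2$, and $z=|z|(\cos\phi\,e_1+\sin\phi\,e_2)$ with $\phi\in[0,\pi]$, whence $z^\ast=\tfrac{|z|-\rho_z-1}{|z|}z$ has non-negative $e_2$-coordinate $(z^\ast)_2=\tfrac{|z|-\rho_z-1}{|z|}|z|\sin\phi$, while $(y^\ast)_2=-t<0$.

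I would dispose of ``$L^\ast$ meets no grain of $V$'' first, since it is essentially Lemma~\ref{elGeomLem0} applied with $(z,\rho_z)$ in the roles of $(y,\rho)$ there: as $y$ is the $V$-center closest to $o$ and $|y|=r$, every $V$-center has norm at least $r$, and $L=B_{\rho_z}(z)$ meets no grain of $V$ by the mutual non-overlapping assumption; in the regime covered by the hypothesis this gives the required containment $B_{|z|-\rho_z+0.01}(o)\cap\{V\text{-centers}\}=\es$, and since $|z|\ge r-\rho_z-0.01$ (because $L$ meets a far grain $\overline L$) one has $|z|\ge r_1$ for $r$ large. Lemma~\ref{elGeomLem0} then yields that $z^\ast$ has distance at least $1+0.001m^{-1}$ from every grain of $V$, so taking $\delta_3<0.001m^{-1}$ gives $L^\ast\cap V=\es$.

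Next I would handle ``$K^\ast$ meets no grain of $T$'' by splitting $T$ into $L$ and the rest. Every $\overline L=B_{\rho_{\overline z}}(\overline z)\in T\setminus\{L\}$ has $|\overline z|>r+\rho_{\overline z}-0.01$, hence $\overline L\subset\R^d\setminus B_{r-0.01}(o)$; on the other hand $|y^\ast|^2=(r-t)^2+t^2$, so $|y^\ast|\le(r-t)+t^2/\bigl(2(r-t)\bigr)$, and since $t\ge\sqrt2$ this gives $|y^\ast|+1+\delta_3<r-0.01$ once $r$ is large and $\delta_3$ is small, i.e.\ $K^\ast\subset B_{r-0.01}(o)$ is disjoint from $T\setminus\{L\}$. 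For the remaining inclusion $K^\ast\cap L=\es$, and simultaneously for $K^\ast\cap L^\ast=\es$, I would use the hard-core separation $|y-z|>\rho_y+\rho_z$ of $K$ and $L$ together with the case hypothesis. When $\phi$ is bounded away from $0$ the conclusions follow from the $e_1$-coordinates alone, since then $(z^\ast)_1\le|z|\cos\phi$ is well separated from $(y^\ast)_1\approx r-t$ and, if $\phi>\pi/2$, $(z^\ast)_1\le 0$. When $\phi$ is small, one notes that in the regime $|z|\approx r$ the inequality $|y-z|^2=r^2+|z|^2-2r|z|\cos\phi>(\rho_y+\rho_z)^2$ forces $\phi\gtrsim(\rho_y+\rho_z)/r$, so that $(z^\ast)_2=\tfrac{|z|-\rho_z-1}{|z|}|z|\sin\phi\ge\rho_y+\rho_z-o(1)$; combined with $(y^\ast)_2=-t\le-\sqrt2$ this yields $|y^\ast-z^\ast|\ge t+(z^\ast)_2\ge\sqrt2+(\rho_y+\rho_z)-o(1)>2(1+\delta_3)$, and the same transverse-displacement argument gives $|y^\ast-z|>(1+\delta_3)+\rho_z$; the sub-case $\rho_z\in(1,1.01)$ is treated analogously, reducing to the extremal position of $z$ on the prescribed sphere.

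Finally I would collect the lower bounds obtained above --- all of whose error terms are $O(1/r)$ --- and fix $\delta_3\in(0,0.001m^{-1})$ small enough and $r_3$ large enough that each $o(1)$ term is absorbed into the corresponding slack. I expect the main obstacle to be precisely the near-aligned configurations handled in the third paragraph: there the crude transverse separation $t=(\rho_y+1)/\sqrt2$ alone is only of size about $\sqrt2$, so one cannot conclude directly and must genuinely exploit that the hard-core condition on $K$ and $L$ forces $z$ (hence $z^\ast$) to carry a transverse displacement of size at least $\approx\rho_y+\rho_z\ge2$ rather than merely positive. This is the same kind of delicate-but-elementary bookkeeping that makes Lemma~\ref{diffIneqProp} --- and its counterpart in~\cite{strictIneq} --- laborious; as with the companion Lemmas~\ref{elGeomLem0} and~\ref{elGeomLem1}, one reduces to extremal positions of $z$ (either $\rho_z$ at its lower endpoint or $|z|$ on the relevant sphere) and checks the resulting one-parameter families of inequalities.
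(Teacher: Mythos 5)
Your proposal is correct and follows essentially the same route as the paper: the separation of $K^*$ from $L$ and $L^*$ via the transverse ($e_2$-)displacement that the hard-core condition between $K$ and $L$ forces on $z$ is exactly the paper's half-space argument ($K^*\subset H^-$, $L\cup L^*\subset H^+$, which you in fact justify more explicitly than the paper does), and the containment $K^*\subset B_{r-0.01}(o)$ disposes of $T\setminus\{L\}$ just as in the paper. The one point where you deviate is the claim that $L^*$ misses $V$: the paper proves this by a direct law-of-cosines estimate at the vertex $z$, whereas you invoke Lemma~\ref{elGeomLem0} with $(z,\rho_z)$ in the role of $(y,\rho)$ — a valid and arguably cleaner application (it is the same device the paper itself uses at the corresponding step of Lemma~\ref{elGeomLem1}), resting only on the observations that every $V$-center has norm at least $r\ge |z|-\rho_z+0.01$ and that $|z|\ge r-\rho_z-0.01$ because $L$ meets some $\overline{L}\in T\setminus\{L\}$.
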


The statement of Lemma~\ref{elGeomLem2} is illustrated in Figures~\ref{conf2aFig} and~\ref{conf2bFig}.

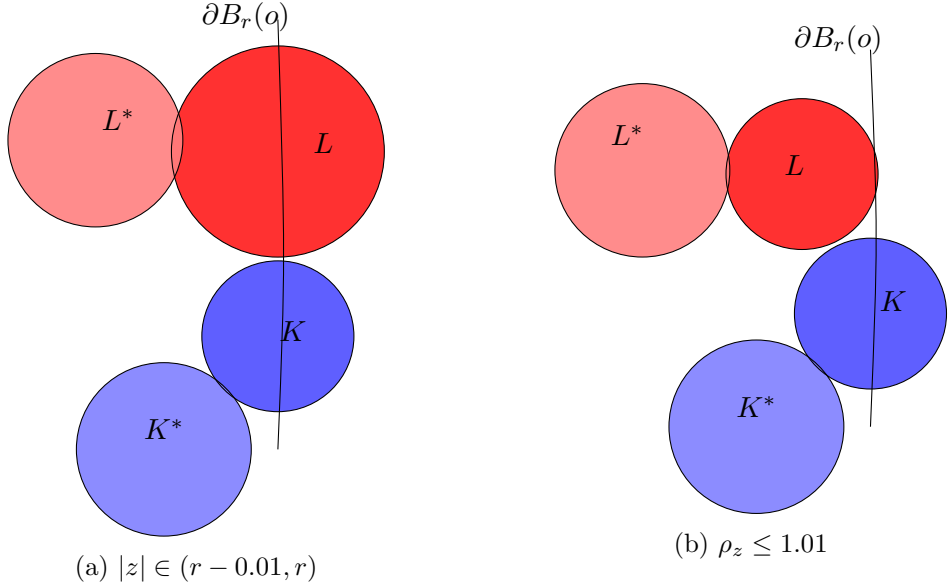
\begin{figure}[!htpb]
\centering
\begin{subfigure}{.45\textwidth}
\centering
\begin{tikzpicture}[scale=1.0,rotate=-90]
\fill[red!90!white,opacity=0.9] (-2.45,0.0) circle (1.4);
\fill[blue!70!white,opacity=0.9] (0,0) circle (1.0);
\fill[red!50!white,opacity=0.9] (-2.6,-2.4) circle (1.15);
\fill[blue!50!white,opacity=0.9] (1.5,-1.5) circle (1.15);

\draw (-2.45,0.0) circle (1.4);
\draw (0,0) circle (1.0);
\draw (-2.6,-2.4) circle (1.15);
\draw (1.5,-1.5) circle (1.15);

\draw (-4.2,0) .. controls (-1.0,0.1) ..(1.5,0);
\coordinate[label=90:$K$] (u) at (0.2,0.2);
\coordinate[label=90:$K^*$] (u) at (1.5,-1.5);
\coordinate[label=90:$L$] (u) at (-2.3,0.6);
\coordinate[label=90:$L^*$] (u) at (-2.6,-2.1);
\coordinate[label=90:$\partial B_r(o)$] (u) at (-3.91,-0.42);
\end{tikzpicture}
\caption{$|z|\in(r-0.01, r)$}
\label{conf2aFig}
\end{subfigure}
\begin{subfigure}{0.45\textwidth}
\centering
\begin{tikzpicture}[scale=1.0,rotate=-90]
\fill[red!90!white,opacity=0.9] (-1.85,-0.9) circle (1.0);
\fill[blue!70!white,opacity=0.9] (0,0) circle (1.0);
\fill[red!50!white,opacity=0.9] (-1.9,-3.0) circle (1.15);
\fill[blue!50!white,opacity=0.9] (1.5,-1.5) circle (1.15);

\draw (-1.85,-0.9) circle (1.0);
\draw (0,0) circle (1.0);
\draw (-1.9,-3.0) circle (1.15);
\draw (1.5,-1.5) circle (1.15);

\draw (-3.5,0) .. controls (-1.0,0.1) ..(1.5,0);
\coordinate[label=90:$K$] (u) at (0.1,0.3);
\coordinate[label=90:$K^*$] (u) at (1.5,-1.5);
\coordinate[label=90:$L$] (u) at (-1.7,-1.0);
\coordinate[label=90:$L^*$] (u) at (-2.1,-3.2);
\coordinate[label=90:$\partial B_r(o)$] (u) at (-3.31,-0.42);
\end{tikzpicture}
\caption{$\rho_z\le1.01$}
\label{conf2bFig}
\end{subfigure}
\caption{Configuration in Lemma~\ref{elGeomLem2}}
\end{figure}

\begin{proof}
We only deal with the case $|z|\in(r-0.01,r)$, since the arguments for the case $\rho_z\in(1,1.01)$ are very similar.
Write $H^+$ and $H^-$ for the subsets of $\R^d$ consisting of points with positive and negative second coordinate, respectively. Then, for sufficiently small $\delta$ and sufficiently large $r_3$ the construction of $y^*$ and $z^*$ implies that  $K^*\subset H^-$ and $L^*\subset H^+$. In particular, $K^*\cap (L\cup L^*)=\es$. Moreover, again if $\delta_3$ is sufficiently small and $r_3$ is sufficiently large, then the distance from $y^*$ to $\partial B_r(o)$ is larger than $1.02$. In particular, $K^*$ does not intersect any grain from $T$. It remains to show that $L^*$ does not intersect any grain from $V$. Let $\overline{K}=B_{\rho_{\overline{y}}}(\overline{y})\in V$ be arbitrary. Then,
    $$|\overline{y} - z^*|^2 = |z^*-z|^2+|\overline{y} - z|^2-2{\langle z^*-z, \overline{y} - z \rangle}.$$
Since $|z^*-z|\ge2$ and $|\overline{y} - z|\ge1+\rho_{\overline{y}}$, it remains to show that
\begin{align}
\label{elGeomEq21}
    (1+\delta+\rho_{\overline{y}})^2\le 4+(1+\rho_{\overline{y}})^2-4(1+\rho_{\overline{y}})\frac{\langle z^*-z, \overline{y} - z \rangle}{|z^* - z||\overline{y} - z| }.
\end{align}
Now, 
    $$4(1+\rho_{\overline{y}})\frac{\langle z^*-z, \overline{y} - z \rangle}{|z^* - z||\overline{y} - z| } \le 0.1$$
provided that $r_3$ is sufficiently large. Hence, a direct computation gives~\eqref{elGeomEq21} for sufficiently small $\delta$.
\end{proof}

Using Lemmas~\ref{elGeomLem-1}--\ref{elGeomLem2}, we now outline the proof of Lemma~\ref{diffIneqProp}. In order to make it easier for the reader to look up further details, we try to adhere closely to the structure of~\cite[Lemma 3.2]{strictIneq}.
\begin{proof}[Proof of Lemma~\ref{diffIneqProp}]
By Lemma~\ref{mrLem} it suffices to construct a continuous function $\delta:(0,1)^2\to(0,1)$ such that 
	$$P_{n,2}(x+K,p,q)\ge \delta(p,q)P_{n,1}(x+K,p,q)$$
holds for all $n\ge1$, $p,q\in(0,1)$, $x \in B_n(o)$ and  $\Q$-almost all $K \in \mc{K}$. 
    We distinguish three cases depending on the distance from $x$ to the origin and to the boundary of $B_n$. In the following, we let $\delta_0\in(0,1)$ and $r_0\ge1$ denote a small and a large constant whose value is determined in the course of the proof. Now, we fix $x \in B_n(o)$ and $K \in \mc{K}$. In the proof we will use the short notation $C_r=B_r(x)$, $E_{n,1} = E_{n,1}(x + K)$, $R_{n, \alpha, \beta} = R_{n,\alpha,\beta}(x)$ and $A_{\alpha,\beta}=A_{\alpha,\beta}(x)$.
   
{\bf Case $\mathbf{|x|\in (r_0,n-r_0)}$.}
 We build up the Poisson Boolean model iteratively. In the first step, we create all grains whose centers are not contained in $C_{0.5r_0}$, and we also determine which of those grains are active. In particular, this information is sufficient to determine the special dispensable grains whose center is not located in $C_{0.6r_0}$. In the second step, we determine which of those grains are green. The partially colored process of grains resulting from these two steps is called $\Phi^{(1)}$.

Now, consider two specific subsets of grains in $T,V\subset \Phi^{(1)}$. The set $T$ consists of those grains in $\Phi^{(1)}$ that are connected by an uncolored path of overlapping grains in $\Phi^{(1)}$ to $B_n(o)\setminus B_{n-1}(o)$. Similarly, $V$ denotes the set of those grains in $\Phi^{(1)}$ that are connected by an uncolored path to $B_1(o)$. In the next step, we continue building up the Poisson Boolean model by adding further grains with centers in $C_{0.5r_0}$. More precisely, the grains are added in decreasing order in the distances of the grain center to $x$. This construction is continued until an active grain $K = B_{\rho_y}(y)$ is found that intersects some grain in $V$ or some grain in $T$. Without loss of generality, assume the former and add $K$ to $V$. We put $r=|x-y|$. 

    Note that if $E_{n,1} \cap R_{n,0.4r_0,0.6r_0}$ occurs, then $K$ exists and no grain of $V$ intersects some grain of $T$. The geometric construction presented in~\cite[Lemma 3.2]{strictIneq} now proceeds by showing that with a probability bounded away from $0$ it is possible to extend the sets $V$ and $T$ radially towards $x$ in a way that $K_0$ becomes $2$-pivotal. We explain how the first steps of this construction can be transferred to the setting of random radii. In fact, the first steps are the most difficult part of the construction since all possible configurations of $V$ and $T$ have to be taken into account accordingly. In contrast, after the first steps one has a fairly precise control on the existing configuration, which makes it much easier to extend $V$ and $T$ radially towards $x$. More precisely, we say that two balls $K^*$ and $L^*$ of radius in $[1+\delta_0,m]$ \emph{extend $V$ and $T$} if 
\begin{enumerate}
\item the centers of $K^*$ and $L^*$ are contained in $C_{r}$,
\item $K^*\cap L^*=\es$,
\item $K^*$ does not intersect any grain from $T$,
\item $L^*$ does not intersect any grain from $V$,
\item $K\cap K^*\ne\es$,
\item $L^*$ intersects some grain from $T$.
\end{enumerate}
In order to guarantee that extensions occur with positive probability, it is important to note that if $\delta_0$ is sufficiently small, then Lemma~\ref{elGeomLem-1} allows small fluctuations of the locations and sizes of $K^*$ and $L^*$ without destroying their configurational properties.

    On the event $E_{n,1} \cap R_{n,0.4r_0,0.6r_0}$, we distinguish three different types of configurations for $V$ and $T$ that will be denoted by $E_3$, $E_4$ and $E_5$, respectively. First, let $E_3$ be the event that there exists some grain $L' = B_{\rho_{z'}}(z')$ in $T$ with $|z'|<r+\rho_{z'}-0.01$. In this case, provided that $\delta_0$ is sufficiently small and $r_0$ is sufficiently large, Lemma~\ref{elGeomLem1} shows that $V$ and $T$ can be extended by suitable grains $K^*$ and $L^*$. Second, let $E_4$ denote the event that $E_3$ does not occur, but there exist $\widetilde{z}\in C_r$ and $\widetilde{\rho}\in[1.01,m]$ such that $\widetilde{K} = B_{\widetilde{\rho}}(\widetilde{z})$ intersects some grain in $T$, but no grain in $V$. By shrinking $\widetilde{K}$ towards the boundary of $C_r$, we may assume that either $|\widetilde{z}| \in (r-0.01,r)$ and $\widetilde{\rho} \ge 1.01$ or that $|\widetilde{\rho}|=1.01$. In both situations, Lemma~\ref{elGeomLem2} allows us to extend $V$ and $T$. Finally, if $E_3$ and $E_4$ do not occur, then we continue the radial generation of the Poisson Boolean model, where we only generate active grains that intersect some grain in $T$ but no grain in $V$. This is done until the first such grain $L = B_{\rho_z}(z)$ is found, which is then added to $T$. We let $E_5$ denote the event that such a grain does exist. Since $E_4$ does not occur, we conclude that $\rho_{z} \le 1.01$. Hence, again provided that $\delta_0$ is sufficiently small and $r_0$ is sufficiently large, Lemma~\ref{elGeomLem2} implies that also under $E_5$ the families $V$ and $T$ can be extended.

	To summarize, we have seen that conditioned on each of the events $E_3$, $E_4$ or $E_5$ the probability of being able to extend each $V$ and $T$ by one further grain is strictly larger than $0$. As in~\cite[Lemma 3.2]{strictIneq}, a refinement of the geometric construction shows that by performing several extension steps, $V$ and $T$ can be extended so that $K_0$ is $2$-pivotal with a probability that is bounded below by a strictly positive function that is continuous in $(p, q)$. Since Lemma~\ref{noRedLem} gives a corresponding lower bound for the probability of $E_{n,1} \cap R_{n,0.4r_0,0.6r_0}$, this concludes the first case.  

    {{\bf Case $\mathbf{|x|\le r_0}$.}} As before, we build up the Poisson Boolean model in different phases. First, we add all grains with centers in $B_n(o)\setminus C_{2r_0}$ and determine which of these are active. In particular, this determines the special dispensable grains in $B_n(o)\setminus C_{3r_0}$ and we find out which of them are green. In the second phase, continue to add grains in decreasing distance to $x$ until an active grain $K$ that is connected to $\partial B_n(o)$ is found. Let $H$ denote the event that such a grain is found which has the additional property that the distance of its center to $x$ is between $1.9r_0$ and $2.1r_0$. Now, the event $H$ occurs if $E_{n,1} \cap R_{n, 1.5r_0, 3r_0}$ occurs. As in the previous case, a suitable geometric construction shows that conditioned on $H$, the event that $K_0$ is $2$-pivotal has a probability that is bounded below by a positive continuous function in $p$. Note that, the present case is substantially simpler since only one  family (and not two families) of grains needs to be extended.

{\bf Case $\mathbf{|x|\ge n-r_0}$.} This case is omitted, as it is similar to the previous one.
\end{proof}

\bibliography{../template.bib}
\bibliographystyle{abbrv}
\end{document}